\definecolor{wineRed}{rgb}{0.7,0,0.3}
\definecolor{grandBleu}{rgb}{0,0,0.8}
\definecolor{darkGreen}{rgb}{0,0.4,0}
\definecolor{blueViolet}{rgb}{0.4,0,1.0}
\definecolor{bloodOrange}{rgb}{0.85,0.05,0}
\definecolor{mycolor}{rgb}{0.8,0,0.2}
\DeclareMathAlphabet{\mathpzc}{OT1}{pzc}{m}{it}
\numberwithin{equation}{section}
\theoremstyle{plain}
\newtheorem{mTh}{Main Theorem} 
\newtheorem{keylem}{Key-Lemma}
\newtheorem{cor}{Corollary}
\theoremstyle{definition}
\newtheorem{defn}{Definition}
\newtheorem{rem}{Remark}
\newtheorem{ex}{Example}
\def\N{\mathbb{N}}
\def\R{\mathbb{R}}
\def\sH{\mathscr{H}}
\def\sJ{\mathscr{J}}
\def\sK{\mathscr{K}}
\def\sL{\mathscr{L}}
\def\sP{\mathscr{P}}
\def\sQ{\mathscr{Q}}
\def\sS{\mathscr{S}}
\def\sT{\mathscr{T}}
\def\sU{\mathscr{U}}
\def\sV{\mathscr{V}}
\def\sW{\mathscr{W}}
\def\sX{\mathscr{X}}
\def\sZ{\mathscr{Z}}
\def\ad{{\rm ad}}
\def\ds{\displaystyle}
\def\dual#1{\langle #1 \rangle}
\def\Lap{\mathit{\Delta}}
\def\Sgn{\mathop{\mathrm{Sgr}}\nolimits}
\DeclareMathOperator{\diver}{div}
\DeclareMathOperator{\proj}{proj}
\begin{document}
\vspace*{-0cm}
\title{Optimal Control of Pseudo-Parabolic KWC Systems for Grain Boundary Motion
\thanks{This work is supported by JST SPRING Grant Number JPMJSP2109. This work is partially supported by the Office of Naval Research (ONR) under Award NO: N00014-24-1-2147,
NSF grant DMS-2408877, the Air Force Office of Scientific Research (AFOSR) under Award NO: FA9550-22-1-0248.}
\vspace{-3ex}
}
\author{Harbir Antil
\footnotemark[2]
}
\affiliation{Department of Mathematical Sciences and \\ The Center for Mathematics and Artificial Intelligence (CMAI), 
\\
George Mason University, Fairfax, VA 22030, USA}
\email{hantil@gmu.edu}
\vspace{-2ex}

\sauthor{Daiki Mizuno
\footnotemark[2]
}
\saffiliation{Division of Mathematics and Informatics, \\ Graduate School of Science and Engineering, Chiba University, \\ 1-33, Yayoi-cho, Inage-ku, 263-8522, Chiba, Japan}
\semail{d-mizuno@chiba-u.jp}
\vspace{-2ex}

\tauthor{Ken Shirakawa
\footnotemark[3]
}
\taffiliation{Department of Mathematics, Faculty of Education, Chiba University \\ 1--33 Yayoi-cho, Inage-ku, 263--8522, Chiba, Japan}
\temail{sirakawa@faculty.chiba-u.jp}
\vspace{-2ex}

\footcomment{
AMS Subject Classification: 
35K70, 
35G50, 
49J20, 
49K20. 
74N20. 
\\
Keywords: KWC system, pseudo-parabolic structure, optimal control problem, existence and semi-continuous dependence, 1st order necessary condition of optimality
}
\maketitle

\vspace{-4ex}

\noindent
{\bf Abstract.}
The KWC system is a well-known generic framework for phase-field models of grain boundary 
motion, whose original formulation is given as a parabolic gradient flow of a free energy. In the 
original KWC system, the results of uniqueness have been relatively scarce compared to other 
issues, such as existence, qualitative behavior, and numerics. This lack of progress has posed a 
significant challenge for more advanced topics, including optimal control. 
To overcome this, the authors have recently introduced the pseudo-parabolic structure to 
simultaneously preserve the gradient flow nature of free-energy, and to ensure the 
well-posedness including the uniqueness. The goal of this paper is to study an optimization
problem constrained by pseudo-parabolic KWC system. The theory will be developed 
through a series of Main Theorems concerning the existence and semi-continuous 
dependence of optimal controls, and first-order necessary conditions for optimality.

\pagebreak 

\section*{Introduction}

Let $ 0 < T < \infty $ be a fixed constant of time. Let $ N \in \{1, 2, 3, 4\} $ be a constant of dimension, and let $ \Omega \subset \R^N $ be a bounded domain with a Lipschitz boundary $ \Gamma := \partial \Omega $. Besides, we let $ Q := (0, T) \times \Omega $, $ \Sigma := (0, T) \times \Gamma $, $ H := L^2(\Omega) $, and $ \sH := L^2(0, T; H) $.

In this paper, we consider a class of optimal control problems $ \mbox{(OCP)}_\varepsilon $, parametrized by $ \varepsilon \in [0, 1] $. Each problem (OCP)$_\varepsilon$ is motivated by a constrained temperature control problem arising in grain boundary motion, and is formulated as follows.
\bigskip

\noindent
{\bf (OCP)$_\varepsilon$:} Find a pair of function $[u^*, v^*] \in [\sH]^2$, called the \textit{optimal control}, such that 
\begin{gather}
  [u^*, v^*] \in \sU_\ad := \{ [\tilde u, \tilde v] \in [\sH]^2 \,|\, \underline{u} \leq \tilde u \leq \overline{u} \mbox{ a.e.  in } Q \}, \mbox{ and } \label{ocp}
  \\
  \sJ_\varepsilon(u^*, v^*) = \min \{\sJ_\varepsilon(u,v) \,|\, [u,v] \in \sU_\ad\},
\end{gather}
where $\sJ_\varepsilon$ is a cost functional on $[\sH]^2$, defined as follows:
\begin{align}
  \sJ_\varepsilon&: [u,v] \in [\sH]^2 \mapsto \sJ_\varepsilon(u,v)
  \\
  &:= \frac{M_\eta}{2} \int_0^T |(\eta - \eta_{\rm ad})(t)|_H^2 \,dt + \frac{M_\theta}{2} \int_0^T |(\theta - \theta_\ad)(t)|_H^2 \,dt \label{cost}
  \\
  &\quad + \frac{M_u}{2} \int_0^T |u(t)|_H^2 \,dt + \frac{M_v}{2} \int_0^T |v(t)|_H^2 \,dt \in [0,\infty),
\end{align}
with $[\eta, \theta] \in [\sH]^2$ solving the following state system, denoted by (S)$_\varepsilon$.

\begin{align}
    \mbox{{\bf (S)}$_\varepsilon$}:&
    \\
    & \begin{cases}
        \partial_t \eta -\mathit{\Delta} \bigl( \eta +\mu^2 \partial_t \eta \bigr) +g(\eta) +\alpha'(\eta)\sqrt{\varepsilon^2 + |\nabla \theta|^2} = L_u u(t, x), ~ \mbox{for $ (t, x) \in Q $,}
        \\[0.5ex]
        \nabla \bigl( \eta +\mu^2 \partial_t \eta \bigr) \cdot n_\Gamma =  0, ~ \mbox{on $ \Sigma $,}
        \
        \\[0.5ex]
        \eta(0, x) = \eta_0(x),~ \mbox{for $ x \in \Omega $,}
    \end{cases}
    \label{(S)1st}
    \\[1.5ex]
    & \begin{cases}
        \ds \alpha_0(\eta) \partial_t \theta -\mathrm{div} \left( \alpha(\eta) \frac{\nabla \theta}{\sqrt{\varepsilon^2 + |\nabla \theta|^2}} +\nu^2 \nabla \partial_t \theta \right) = L_v v(t, x), ~ \mbox{for $ (t, x) \in Q $,}
        \\[1ex]
        \left( \alpha(\eta) \frac{\nabla \theta}{\sqrt{\varepsilon^2 + |\nabla \theta|^2}} +\nu^2 \nabla \partial_t \theta \right) \cdot n_\Gamma = 0 ~ \mbox{on $ \Sigma $,}
        \\[0.5ex]
        \theta(0, x) = \theta_0(x),~ \mbox{for $ x \in \Omega $.}
    \end{cases}
    \label{(S)2nd}
\end{align} 
For precise assumptions on the data, see section~\ref{s:sec2}.

The state system (S)$_\varepsilon$ is referred to as the \emph{KWC system}, which is known as a phase-field model of planar grain boundary motion, proposed by Kobayashi--Warren--Carter \cite{MR1752970,MR1794359}. 
In this context, the unknowns $ \eta = \eta(t, x) $ and $ \theta = \theta(t, x) $ are order parameters that indicate the \emph{orientation order} and \emph{orientation angle} of the polycrystal body, respectively. Moreover, $ \eta_0 = \eta_0(x) $ and $ \theta_0 = \theta_0(x) $ are the \emph{initial data} for $ \eta $ and $ \theta  $, respectively. The functions $ \alpha_0 = \alpha_0(\eta) $ and $ \alpha = \alpha(\eta) $ are fixed positive-valued functions to reproduce the mobilities of grain boundary motions. Additionally, $ g = g(\eta) $ is a perturbation for the orientation order $ \eta $, having a nonnegative potential $ G = G(\eta) $, i.e. $ \frac{d}{d\eta} G(\eta) = g(\eta) $. 
Finally, $ u = u(t, x) $ and $ v = v(t, x) $ serve as external forcing terms for $ \eta $ and $ \theta  $, respectively. 

In the optimal control problem (OCP)$_\varepsilon$, the forcing pair $ [u, v] $ is referred to as the \emph{control}, which serve as principal variable of the problem. Especially, $ -u $ denotes the relative temperature, and thus, $ u $ plays a role of temperature control in the formation of grain boundaries. Here, $L_u \geq 0$, $L_v \geq 0$, $M_\eta \geq 0$, $M_\theta \geq 0$, $M_u \geq 0$, and $M_v \geq 0$ are fixed positive constants, which decide the balance of weights between the cost functional $\sJ_\varepsilon$ and the state system (S)$_\varepsilon$. The constants $L_u$, $L_v$, $M_u$, and $M_v$ satisfy the following conditions:
\begin{align}
    \mbox{If } L_u M_u = 0 (\mbox{ resp. } L_v M_v = 0), \mbox{ then } M_u = L_u = 0 \, (\mbox{resp. } L_v = M_v = 0).
\end{align}

Recently, the mathematical methods for KWC system have been developed by a lot of mathematicians from various viewpoints (cf. \cite{
    MR3670006,MR4677072,MR4683700,MR1720614,MR2469586,MR2548486,MR3038131,MR3362773,MR2836555,MR2668289,MR3155454,MR3268865,MR4352617,
    MR1752970,MR1794359,
    MR4395725,MR4218112,MR3888633,MR4806756,
    MR4529388,mizuno2024weak,giga2024fractional,MR3937357,MR4603338,MR4809351
}). Although the systems considered in these previous studies differ in their specific formulations, they share a common origin in the form of the following evolution equation:
\begin{gather}\label{EE}
    -\mathcal{A}_0\bigl(\eta(t) \bigr) \frac{d}{dt} \left[ \begin{matrix}
        \eta(t) \\[1ex] \theta(t)
    \end{matrix} \right] = \frac{\delta}{\delta [\eta, \theta]} \mathcal{F}_\varepsilon(\eta(t), \theta(t)) +\left[ \begin{matrix}
        u(t) \\[1ex] v(t)
    \end{matrix}
    \right]~~ \mbox{in $ 
    [L^2(\Omega)]^2 $,}
    \\
    \mbox{for $ t \in (0, T) $.}
\end{gather}
Evolution equation \eqref{EE} is based on the \emph{gradient flow} of the following free-energy, called \emph{KWC energy}, which is defined as:
\begin{align}\label{FE}
    & \mathcal{F}_\varepsilon : [\eta, \theta] \in D(\mathcal{F}_\varepsilon) \subset H^1(\Omega) \times [BV(\Omega) \cap L^2(\Omega)]
    \nonumber
    \\
    & \mapsto \mathcal{F}_\varepsilon(\eta, \theta) := \frac{1}{2} \int_\Omega |\nabla \eta|^2 \, dx +\int_\Omega G(\eta) \, dx +\int_\Omega \alpha(\eta) \sqrt{\varepsilon^2 +|D \theta|^2} \in [0, \infty), 
\end{align}
with a functional derivative $ \frac{\delta}{\delta [\eta, \theta]} \mathcal{F}_\varepsilon $, and an unknown-dependent monotone operator $ \mathcal{A}_0(\eta) $ $\subset [L^2(\Omega)]^2 $. Indeed, our system (S) is derived from the following evolution equation \eqref{EE} in the case when:
\begin{gather}\label{A_mu^nu}
    \mathcal{A}_0(\eta) := \left[ \, \begin{matrix}
        I +\mu^2(-\mathit{\Delta}_0) & O
        \\[1ex]
        O & \alpha(\eta) I +\nu^2 (-\mathit{\Delta}_0)
    \end{matrix} \, \right] 
    \subset [L^2(\Omega)]^{2} \times [L^2(\Omega)]^{2},
\end{gather}
for each $ \eta \in L^2(\Omega) $, where $ I \subset L^2(\Omega) \times L^2(\Omega) $ and $ O \subset L^2(\Omega) \times L^2(\Omega) $ are the identity map and zero map, respectively, and $ \mathit{\Delta}_0 \subset L^2(\Omega) \times L^2(\Omega) $ is the Laplacian subject to the zero-Neumann boundary condition.

The original KWC system corresponds to the case when $ \mu = \nu = 0 $, and is formulated as a nonlinear parabolic system. In the original setting, most mathematical studies have focused on the existence and qualitative behavior of solutions (cf. \cite{MR3670006,MR4677072,MR4683700,MR1720614,MR2469586,MR2548486,MR3038131,MR3362773,MR2836555,MR2668289,MR3155454,MR3268865,MR4352617}), numerical simulations (cf. \cite{MR1752970,MR1794359,MR3951294}), and related topics (cf. \cite{MR4395725,MR4218112,MR3888633,MR4806756,
MR4529388,giga2024fractional,MR3937357,MR4603338,MR4809351}). In contrast, results concerning the uniqueness of solutions have been relatively scarce (cf. \cite{MR2469586,MR3155454,MR4218112,MR4395725}). A key difficulty in addressing uniqueness is that the operator $ \mathcal{A}_0(\eta) $ depends on the unknown $ \eta $, while standard parabolic regularity for $ \eta $ is insufficient to resolve the issue in general higher-dimensional settings of $ \Omega $. Indeed, uniqueness results have only been obtained in special cases, such as when the spatial domain is one-dimensional (cf. \cite[Theorem 2.2]{MR2469586}), or when the operator $ \mathcal{A}_0 $ is independent of $ \eta $ (cf. \cite[Main Theorem 1]{MR4218112}, \cite[Main Theorem 1]{MR4395725}, and \cite[Theorem 2.2]{MR3155454}). Outside of these cases, the uniqueness issue has remained an open problem for many years. This lack of progress has also presented a significant challenge to further developments in advanced topics, including optimization problems.

Physically, the operator $ \mathcal{A}_0(\eta) $ represents the mobility (driving force) of grain boundary motion. In the KWC model, grain boundaries are assumed to form in regions where the orientation order $ \eta $ remains undeveloped. In this context, the assumption that the mobility depends dynamically on $ \eta $ reflects a more realistic representation of the underlying physical phenomena. Therefore, the possibility of advancing mathematical analysis under this more physically faithful setting plays a crucial role in determining the practical applicability of the results.

In the formula \eqref{EE} and \eqref{A_mu^nu}, the pseudo-parabolic structure provided by $ (-\mathit{\Delta}_0) $ has been proposed to preserve the gradient flow nature of the KWC system, and to ensure sufficient regularity for establishing uniqueness, simultaneously. Indeed, recent studies such as \cite{mizuno2024weak,MR4683693,MR4797677} have demonstrated that the pseudo-parabolic formulation leads to a mathematically well-posed version of the KWC system. This development indicates that incorporating pseudo-parabolic feature provides a solid foundation for advancing the study of optimal control problems for grain boundary motion, even in the unknown dependent setting  of the mobility.

Consequently, we set the goal to establish a mathematical theory of optimization governed by the pseudo-parabolic KWC system (S)$_\varepsilon$. The mathematical theory is developed through the following Main Theorems.
\begin{description}
    \item[Main Theorem 1:]Existence of the optimal control for (OCP)$_\varepsilon$.
        \vspace{-1ex}
    \item[Main Theorem 2:]Semi-continuous dependence of (OCP)$_\varepsilon$ with respect to $ \varepsilon $ and the initial data of (S)$_\varepsilon$.
        \vspace{-1ex}
    \item[Main Theorem 3:]First-order necessary optimality conditions when $ \varepsilon > 0 $.
        \vspace{-1ex}
    \item[Main Theorem 4:]Necessary condition for optimality when $ \varepsilon = 0 $.
\end{description}

This paper is organized as follows. Preliminaries are given in Section 1, and on this basis, the Main Theorems are stated in Section 2. For the proofs of Main Theorems, some Key-Lemmas are prepared in Section 3 . Based on these, the Main Theorems are proved in Section 4.

\section{Preliminaries}
We begin by prescribing the notations used throughout this paper. 
\smallskip
As notations of base spaces, we define:
\begin{equation*}
    H := L^2(\Omega), \ V := H^1(\Omega), ~ 
    \sH := L^2(0,T;H),  \mbox{ and } \sV := L^2(0,T;V),
\end{equation*}
and we suppose that:
\begin{equation}
    V \subset H = H^* \subset V^*, \mbox{ and } \sV \subset \sH = \sH^* \subset \sV^*,
\end{equation}
where ``$=$'' is due to the identifications of the Hilbert spaces with their duals, and ``$\subset$'' implies continuous embeddings.
\smallskip

\noindent
\underline{\textbf{\textit{Notations in real analysis.}}}
We define:
\begin{align*}
    & r \vee s := \max \{ r, s \} ~ \mbox{ and } ~ r \wedge s := \min \{r, s\}, \mbox{ for all $ r, s \in [-\infty, \infty] $,}
\end{align*}
and especially, we write:
\begin{align*}
    & [r]^+ := r \vee 0 ~ \mbox{ and } ~ [r]^- := -(r \wedge 0), \mbox{ for all $ r \in [-\infty, \infty] $.}
\end{align*}

Let $ d \in \N $ be a fixed dimension. We denote by $ |y| $ and $ y \cdot z $ the Euclidean norm of $ y \in \mathbb{R}^d $ and the scalar product of $ y, z \in \R^d $, respectively, i.e., 
\begin{equation*}
\begin{array}{c}
| y | := \sqrt{y_1^2 +\cdots +y_d^2} \mbox{ \ and \ } y \cdot z  := y_1 z_1 +\cdots +y_d z_d, 
\\[1ex]
\mbox{ for all $ y = [y_1, \ldots, y_d], ~ z = [z_1, \ldots, z_d] \in \mathbb{R}^d $.}
\end{array}
\end{equation*}
Besides, we let:
\begin{align*}
    & \mathbb{B}^d := \left\{ \begin{array}{l|l}
        y \in \R^d & |y| < 1
    \end{array} \right\} ~ \mbox{ and } ~ \mathbb{S}^{d -1} := \left\{ \begin{array}{l|l}
        y \in \R^d & |y| = 1
    \end{array} \right\}.
\end{align*}
We denote by $\mathcal{L}^{d}$ the $ d $-dimensional Lebesgue measure, and we denote by $ \mathcal{H}^{d} $ the $ d $-dimensional Hausdorff measure.  In particular, the measure theoretical phrases, such as ``a.e.'', ``$dt$'', and ``$dx$'', and so on, are all with respect to the Lebesgue measure in each corresponding dimension. Also on a Lipschitz-surface $ S $, the phrase ``a.e.'' is with respect to the Hausdorff measure in each corresponding Hausdorff dimension. In particular, if $S$ is $C^1$-surface, then we simply denote by $dS$ the area-element of the integration on $S$.

For a Borel set $ E \subset \R^d $, we denote by $ \chi_E : \R^d \longrightarrow \{0, 1\} $ the characteristic function of $ E $. Additionally, for a distribution $ w $ on an open set in $ \R^d $ and any $i \in \{ 1,\dots,d \}$, let $ \partial_i w$ be the distributional differential with respect to $i$-th variable of $w$. As well as we consider, the differential operators, such as $\nabla,\ \diver, \ \nabla^2$, and so on, are considered in distributional senses.
\bigskip

\noindent
\underline{\textbf{\textit{Abstract notations. (cf. \cite[Chapter II]{MR0348562})}}}
For an abstract Banach space $ X $, we denote by $ |\cdot|_{X} $ the norm of $ X $, and denote by $ \langle \cdot, \cdot \rangle_X $ the duality pairing between $ X $ and its dual $ X^* $. In particular, when $ X $ is a Hilbert space, we denote by $ (\cdot,\cdot)_{X} $ the inner product of $ X $. 

For two Banach spaces $ X $ and $ Y $,  let $  \mathscr{L}(X; Y)$ be the Banach space of bounded linear operators from $ X $ into $ Y $. 

For Banach spaces $ X_1, \dots, X_d $ with $ 1 < d \in \N $, let $ X_1 \times \dots \times X_d $ be the product Banach space endowed with the norm $ |\cdot|_{X_1 \times \cdots \times X_d} := |\cdot|_{X_1} + \cdots +|\cdot|_{X_d} $. However, when all $ X_1, \dots, X_d $ are Hilbert spaces, $ X_1 \times \dots \times X_d $ denotes the product Hilbert space endowed with the inner product $ (\cdot, \cdot)_{X_1 \times \cdots \times X_d} := (\cdot, \cdot)_{X_1} + \cdots +(\cdot, \cdot)_{X_d} $ and the norm $ |\cdot|_{X_1 \times \cdots \times X_d} := \bigl( |\cdot|_{X_1}^2 + \cdots +|\cdot|_{X_d}^2 \bigr)^{\frac{1}{2}} $. In particular, when all $ X_1, \dots,  X_d $ coincide with a Banach space $ Y $, the product space $X_1 \times \dots \times X_d$ is simply denoted by $[Y]^d$.
\bigskip

\noindent
\underline{\textbf{\textit{Notations of basic differential operators.}}}
Let $\Lap_0$ be the Laplacian operator subject to zero-Neumann boundary condition. We can regard $\Lap_0$ as a bounded linear operator from $V$ to $V^*$, with the following variational identity:
\begin{equation}
    \langle \Lap_0 w_1, w_2 \rangle_V = -(\nabla w_1, \nabla w_2)_{[H]^N}, \ \mbox{ for } w_1, w_2 \in V.
\end{equation}

In the meantime, we can regard the distributional divergence ``$\diver$'' as a bounded linear operator from $[H]^N$ to $V^*$, given as:
\begin{equation}
    \langle \diver {\bm w}, \varphi \rangle_V := -({\bm w}, \nabla \varphi)_{[H]^N}, \ \mbox{ for every } {\bm w} \in [H]^N, \mbox{ and } \varphi \in V.
\end{equation}

\smallskip

\noindent
\underline{\textbf{\textit{Notations for the time-discretization.}}}
Let $\tau > 0$ be a constant of the time step-size, and let $\{ t_i \}_{i=0}^\infty \subset [0,\infty)$ be the time sequence defined as:
\begin{equation*}
  t_i := i\tau,\ i=0,1,2,\ldots.
\end{equation*}
Let $X$ be a Banach space. Then, for any sequence $\{ [t_i,w_i] \}_{i=0}^\infty \subset[0,\infty) \times X$, we define the \textit{forward time-interpolation} $[\overline{w}]_\tau \in L^\infty_\mathrm{loc}([0,\infty); X)$, the \textit{backward time-interpolation} $[\underline{w}]_\tau \in L^\infty_\mathrm{loc}([0,\infty);X)$ and the \textit{linear time-interpolation} $[w]_\tau \in W^{1,2}_\mathrm{loc}([0,\infty);X)$, by letting:
\begin{equation*}
  \left\{\begin{aligned}
    &[\overline{w}]_\tau(t) := \chi_{(-\infty,0]} z_0 + \sum_{i=1}^\infty \chi_{(t_{i-1}, t_i]}(t) w_i, \\
    &[\underline{w}]_\tau(t) := \sum_{i=0}^\infty \chi_{(t_{i}, t_{i+1}]}(t) w_{i}, \\
    &[w]_\tau (t) := \sum_{i=1}^\infty \chi_{[t_{i-1},t_i)}(t) \left(\frac{t-t_{i-1}}{\tau} w_{i} + \frac{t_i - t}{\tau} w_{i-1}\right),
  \end{aligned}\right. ~{\rm in}~ X,\ {\rm for}~ t \geq 0, \label{eq:tI}
\end{equation*}
respectively.

For a constant $q \in [1 ,\infty]$ and for an open interval $I \subset \R$, we can say that $L^q(I;X) \subset L_{\rm loc}^q(\R;X)$ by identifying an $X$-valued function on $I$ with its zero-extension onto $\R$. Also, the following facts hold.
\smallskip

\noindent
$\bullet$ For any $ q \in [1, \infty) $ and any $ w \in L^q(0,T; X) $, we denote by $ \{ w_i \}_{i = 1}^\infty \subset X $ the sequence of time-discretization data of $ w $, defined as: 
\begin{subequations}\label{tI}
\begin{align}\label{tI01}
    &w_i := \frac{1}{\tau} \int_{t_{i -1}}^{t_i} w(\varsigma) \, d \varsigma ~ \mbox{ in $ X $, ~ for $ i = 1, 2, 3, \dots $.}
\end{align}
As is easily checked, the forward time-interpolation $ [\overline{w}]_\tau \in L^q_\mathrm{loc}([0, \infty); X) $ for the above $ \{ w_i \}_{i = 1}^\infty $ fulfills that:
\begin{align}
    & [\overline{w}]_\tau \to w \mbox{ in $ L^q(0,T; X) $, as $ \tau \downarrow 0 $.}\label{tI02}
\end{align}
Moreover, if $w \in L^\infty(Q)$, then,
\begin{gather}
    |[\overline{w}]_\tau(t)|_{L^\infty(\Omega)} \leq |w|_{L^\infty(Q)}, \mbox{ for any } t \in [0,T] \mbox{ and any } \tau > 0, \label{tI03}
\end{gather}
and
\begin{gather}
    [\overline{w}]_\tau \to w \mbox{ weakly-$*$ in } L^\infty(Q) \mbox{ and in the pointwise sense a.e. in $Q$, } \mbox{ as } \tau \downarrow 0. \label{tI04}
\end{gather}

\noindent
$\bullet$ For any $w \in W^{1,2}(0,T;X)$, the sequence $\{ w_i \}_{i=0}^\infty \subset X$ is given as:
\begin{equation}
    w_i := \begin{cases}
        w(t_i), & \mbox{ if } t_i \leq T,
        \\
        w(t_{i-1}), & \mbox{ if } t_{i-1} \leq T < t_i,
        \\
        0, &\mbox{otherwise}.
    \end{cases} \label{tI05}
\end{equation}
Then, the following convergences hold as $\tau \downarrow 0$:
\begin{equation}
    [w]_\tau \to w \mbox{ in } C([0,T];X) \mbox{ and weakly in } W^{1,2}(0,T;X),
\end{equation}
and
\begin{equation}
    [\overline{w}]_\tau \to w, \ [\underline{w}]_\tau \to w \mbox{ in } L^\infty(0,T;H). \label{tI06}
\end{equation}
\end{subequations}
\smallskip

\noindent
\underline{\textbf{\textit{Notations in convex analysis.}}}
Let $X$ be an abstract Hilbert space. Then, for any closed and convex set $K \subset X$, we can define a single-valued operator $\proj_K:X \longrightarrow K$, which maps $w \in X$ to a point $\proj_K(w) \in K$, satisfying:
\begin{equation}
    |\proj_K(w) - w|_X = \min \{ |\tilde w - w|_X \,|\, \tilde w \in K \}.
\end{equation}

\begin{rem}
    Let $K$ be a closed and convex set in a Hilbert space $X$. Then, the projection $\proj_K$ satisfies the following fact:
    \begin{equation}
        \tilde w = \proj_K(w) \mbox{ if and only if } (w - \tilde w, z - \tilde w)_X \leq 0, \ \mbox{ for any } z \in K. \label{proj01}
    \end{equation}
    The operator $\proj_K$ is called the \textit{orthogonal projection} (or \textit{projection} in short) onto $K$.
\end{rem}

\begin{rem} \label{constraint}
    We define $\sK \subset \sH$ as the constraint with the obstacles $\underline{u}, \overline{u} \in L^\infty(Q)$, i.e.,
    \begin{equation}
        \sK := \{ \tilde u \in \sH \,| \, \underline{u} \leq \tilde u \leq \overline{u}, \mbox{ a.e. in } Q \}. 
    \end{equation}
    Then, the projection $\proj_\sK: \sH \longrightarrow \sK$ is given by:
    \begin{gather}
        \begin{aligned}
            [\proj_\sK(u)](t,x) &= \underline{u}(t,x) \vee (\overline{u}(t,x) \wedge u(t,x))
            \\
            &= \begin{cases}
                \overline{u}(t,x), &\mbox{ if } u(t,x) > \overline{u}(t,x),
                \\
                u(t,x), & \mbox{ if } \underline{u}(t,x) \leq u(t,x) \leq \overline{u}(t,x),
                \\
                \underline{u}(t,x), & \mbox{ if } u(t,x) < \underline{u}(t,x),
            \end{cases}
        \end{aligned}
        \\
        \mbox{ a.e. } (t,x) \in Q, \ \mbox{ and for any } u \in \sH.
    \end{gather}
\end{rem}

For a proper, lower semi-continuous (l.s.c.), and convex function $\Psi : \,X \longrightarrow (-\infty, \infty]$ on a Hilbert space $X$, we denote by $D(\Psi)$ the effective domain of $\Psi$. Also, we denote by $\partial \Psi$ the subdifferential of $\Psi$. The set $D(\partial \Psi) := \left\{ z \in X\,|\, \partial \Psi(z) \neq \emptyset \right\}$ is called the domain of $\partial\Psi$. The subdifferential $ \partial \Psi $ is known as a maximal monotone graph in the product space $X \times X$. We often use the notation ``$[z_0, z_0^*] \in \partial \Psi ~{\rm in}~ X \times X$", to mean that ``$z_0^* \in \partial \Psi(z_0) ~{\rm in}~ X~{\rm for}~ z_0 \in D(\partial \Psi)$", by identifying the operator $\partial \Psi$ with its graph in $X\times X$.
\medskip
\begin{ex}
    \label{exConvex}
    Let $\{ \gamma_\varepsilon \}_{\varepsilon \geq 0}$ be a class of convex functions on $\R^N$, defined as:
    \begin{equation}
        \gamma_\varepsilon: y \in \R^N \mapsto \gamma_\varepsilon(y) := \sqrt{\varepsilon^2 + |y|^2} \in [0,\infty).
    \end{equation}
    Then, the following items hold. 
    \begin{description}
        \item[(O)]The subdifferential $ \partial \gamma_0 \subset \R^N \times \R^N $ of the convex function $ \gamma_0 : y = [y_1, \dots, y_n] \in \R^N \mapsto |y| =  \sqrt{y_1^2 + \dots +y_N^2} \in [0, \infty) $ coincides with the following set-valued function $ \Sgn: \R^N \rightarrow 2^{\mathbb{R}^N} $, which is defined as:
\begin{align}\label{Sgn^d}
\Sgn :  y = [y_1, & \dots, y_N] \in \mathbb{R}^N \mapsto \Sgn(y) = \Sgn(y_1, \dots, y_N) 
  \nonumber
  \\
  & := \left\{ \begin{array}{ll}
          \multicolumn{2}{l}{
                  \ds \frac{y}{|y|} = \frac{[y_1, \dots, y_N]}{\sqrt{y_1^2 +\cdots +y_N^2}}, ~ } \mbox{if $ y \ne 0 $,}
                  \\[3ex]
          \overline{\mathbb{B}^N}, & \mbox{otherwise.}
      \end{array} \right.
  \end{align}
\item[(\,I\,)]For every $ \varepsilon > 0 $, the subdifferential $\partial \gamma_\varepsilon$ is identified with the (single-valued) usual gradient, i.e.:
\begin{equation}
    D(\partial \gamma_\varepsilon) = \R^N ~and~\nabla \gamma_\varepsilon : \R^N \ni y \mapsto \nabla \gamma_\varepsilon(y) := \frac{y}{\sqrt{\varepsilon^2 + |y|^2}} \in \R^N.
\end{equation}
Moreover, since:
    \begin{align*}
        \gamma_\varepsilon(y) = \bigl| [\varepsilon, y] \bigr|_{\R^{N +1}} & =\bigl| [\varepsilon, y_1, \dots, y_N] \bigr|_{\R^{N +1}}, \mbox{ for all $ [\varepsilon, y] = [\varepsilon, y_1, \dots, y_N] \in \R^{N +1} $,}
        \\
        & \mbox{with $ \varepsilon \geq 0 $ and $ y = [y_1, \dots, y_N] \in \R^N $,}
    \end{align*}
    it will be estimated that:
    \begin{equation}
      \begin{aligned}
        & 
        \begin{cases} 
            \ds \bigl| \nabla \gamma_\varepsilon(y) \bigr|_{\R^{N}} = \left| \frac{y}{\bigl| [\varepsilon, y] \bigr|_{\mathbb{R}^{N +1}}} \right|_{\R^{N}} \leq \left| \frac{[\varepsilon, y]}{\bigl| [\varepsilon, y] \bigr|_{\mathbb{R}^{N +1}}} \right|_{\R^{N +1}} = 1,
            \\[3ex]
        \ds 
            |\partial_i \partial_j \gamma_\varepsilon(y)| \leq \frac{1}{\varepsilon}, ~ \mbox{ for all $ \varepsilon > 0 $, $ y \in \R^N $, and $ i, j = 1, \dots, N $.}
        \end{cases}
    \end{aligned}\label{exM01}
    \end{equation}
\end{description}
\end{ex}

\begin{ex} \label{exConvex2}
    For any $\varepsilon \geq 0$ and open interval $I \subset (0,T)$, we define a convex function $\Phi_\varepsilon^I$ on $[\sH]^N$, as follows:
    \begin{equation}
        \Phi_\varepsilon^I:{\bm w} \in [\sH]^N \mapsto \Phi_\varepsilon^I({\bm w}) := \int_I \int_\Omega \gamma_\varepsilon({\bm w}) \,dxdt \in [0,\infty),
    \end{equation}
    where $\gamma_\varepsilon$ is as in Example \ref{exConvex}. Then, the subdifferential $\partial \Phi_\varepsilon^I({\bm w}) \subset [\sH]^N$ at ${\bm w} \in [\sH]^N$ is given by:
    \begin{equation}
        \partial \Phi_\varepsilon^I({\bm w}) = \{ {\bm z} \in [\sH]^N \,| \,{\bm z} \in \partial \gamma_\varepsilon({\bm w}), \mbox{ a.e. in } Q \}. 
    \end{equation}
\end{ex}

Finally, we mention about a notion of convergence of convex functions, known as ``Mosco-convergence''.

\begin{defn}[Mosco-convergence: cf. \cite{MR0298508}]\label{Def.Mosco}
    Let $ X $ be an abstract Hilbert space. Let $ \Psi : X \rightarrow (-\infty, \infty] $ be a proper, l.s.c., and convex function, and let $ \{ \Psi_n \}_{n = 1}^\infty $ be a sequence of proper, l.s.c., and convex functions $ \Psi_n : X \rightarrow (-\infty, \infty] $, $ n = 1, 2, 3, \dots $.  Then, it is said that $ \Psi_n \to \Psi $ on $ X $, in the sense of Mosco, as $ n \to \infty $, iff. the following two conditions are fulfilled:
    \begin{description}
      \item[(\hypertarget{M_lb}{M1}) The condition of lower-bound:]$ \ds \varliminf_{n \to \infty} \Psi_n(\check{w}_n) \geq \Psi(\check{w}) $, if $ \check{w} \in X $, $ \{ \check{w}_n  \}_{n = 1}^\infty \subset X $, and $ \check{w}_n \to \check{w} $ weakly in $ X $, as $ n \to \infty $. 
      \item[(\hypertarget{M_opt}{M2}) The condition of optimality:]for any $ \hat{w} \in D(\Psi) $, there exists a sequence \linebreak $ \{ \hat{w}_n \}_{n = 1}^\infty  \subset X $ such that $ \hat{w}_n \to \hat{w} $ in $ X $ and $ \Psi_n(\hat{w}_n) \to \Psi(\hat{w}) $, as $ n \to \infty $.
    \end{description}
    As well as, if the sequence of convex functions $ \{ \widehat{\Psi}_\varepsilon \}_{\varepsilon \in \Xi} $ is labeled by a continuous argument $\varepsilon \in \Xi$ with a range $\Xi \subset \mathbb{R}$ , then for any $\varepsilon_{0} \in \Xi$, the Mosco-convergence of $\{ \widehat{\Psi}_\varepsilon \}_{\varepsilon \in \Xi}$, as $\varepsilon \to \varepsilon_{0}$, is defined by those of subsequences $ \{ \widehat{\Psi}_{\varepsilon_n} \}_{n = 1}^\infty $, for all sequences $\{ \varepsilon_n \}_{n=1}^{\infty} \subset \Xi$, satisfying $\varepsilon_{n} \to \varepsilon_{0}$ as $n \to \infty$.
  \end{defn}
  
  \begin{rem}\label{Rem.MG}
    Let $ X $, $ \Psi $, and $ \{ \Psi_n \}_{n = 1}^\infty $ be as in Definition~\ref{Def.Mosco}. Then, the following hold.
    \begin{description}
      \item[(\hypertarget{Fact1}{Fact\,1})](cf. \cite[Theorem 3.66]{MR0773850} and \cite[Chapter 2]{Kenmochi81}) Let us assume that
      \begin{equation}\label{Mosco01}
        \Psi_n \to \Psi \mbox{ on $ X $, in the sense of  Mosco, as $ n \to \infty $,}
        \vspace{-1ex}
      \end{equation}
  and
  \begin{equation*}
  \left\{ ~ \parbox{10cm}{
  $ [w, w^*] \in X \times X $, ~ $ [w_n, w_n^*] \in \partial \Psi_n $ in $ X \times X $, $ n \in \N $,
  \\[1ex]
  $ w_n \to w $ in $ X $ and $ w_n^* \to w^* $ weakly in $ X $, as $ n \to \infty $.
  } \right.
  \end{equation*}
  Then, it holds that:
  \begin{equation*}
  [w, w^*] \in \partial \Psi \mbox{ in $ X \times X $, and } \Psi_n(w_n) \to \Psi(w) \mbox{, as $ n \to \infty $.}
  \end{equation*}
      \item[(\hypertarget{Fact2}{Fact\,2})](cf. \cite[Lemma 4.1]{MR3661429} and \cite[Appendix]{MR2096945}) Let $ d \in \mathbb{N} $ denote dimension constant, and let $  S \subset \R^d $ be a bounded open set. Then, under the Mosco-convergence as in \eqref{Mosco01}, a sequence $ \{ \widehat{\Psi}_n^S \}_{n = 1}^\infty $ of proper, l.s.c., and convex functions on $ L^2(S; X) $, defined as:
          \begin{equation*}
              w \in L^2(S; X) \mapsto \widehat{\Psi}_n^S(w) := \left\{ \begin{array}{ll}
                      \multicolumn{2}{l}{\ds \int_S \Psi_n(w(t)) \, dt,}
                      \\[1ex]
                      & \mbox{ if $ \Psi_n(w) \in L^1(S) $,}
                      \\[2.5ex]
                      \infty, & \mbox{ otherwise,}
                  \end{array} \right. \mbox{for $ n = 1, 2, 3, \dots $;}
          \end{equation*}
          converges to a proper, l.s.c., and convex function $ \widehat{\Psi}^S $ on $ L^2(S; X) $, defined as:
          \begin{equation*}
              z \in L^2(S; X) \mapsto \widehat{\Psi}^S(z) := \left\{ \begin{array}{ll}
                      \multicolumn{2}{l}{\ds \int_S \Psi(z(t)) \, dt, \mbox{ if $ \Psi(z) \in L^1(S) $,}}
                      \\[2ex]
                      \infty, & \mbox{ otherwise;}
                  \end{array} \right. 
          \end{equation*}
          on $ L^2(S; X) $, in the sense of Mosco, as $ n \to \infty $. 
  \end{description}
  \end{rem}
  
  \begin{ex}[Examples of Mosco-convergence]\label{Rem.ExMG}
      Let $ \varepsilon_0 \geq 0 $ be arbitrary fixed constant.
      \begin{description}
        \item[(O)] $\{ \gamma_\varepsilon \}_{\varepsilon \geq 0}$ be as in Example \ref{exConvex}. Then, it holds that:
      \begin{equation}
        \gamma_\varepsilon \to \gamma_{\varepsilon_0} \mbox{ on $ \R^N $, in the sense of Mosco, as $ \varepsilon \to \varepsilon_0 $.}
      \end{equation}
      \item[(\,I\,)] Let $I \subset (0,T)$ be an arbitrary open interval, and let $\{ \Phi_\varepsilon^I \}_{\varepsilon \geq 0}$ be the class of convex functions as in Example \ref{exConvex2}. According to the statement \textbf{(O)} and Fact 2 in Remark \ref{Rem.MG}, it holds that:
      \begin{equation}
        \Phi_\varepsilon^I \to \Phi_{\varepsilon_0}^I \mbox{ on } [\sH]^N, \mbox{ in the sense of Mosco, as } \varepsilon \to \varepsilon_0.
      \end{equation}
      \end{description}
  \end{ex}

\section{Main Theorem}\label{s:sec2}

In this paper, the main assertions are discussed under the following assumptions.

\begin{itemize}
  \item[(A0)] $0 < T < \infty$ is a fixed constant of time, and $N \in \{ 1,2,3,4 \}$ is a fixed dimension. $\Omega \subset \R^N$ is a bounded domain with a boundary $\Gamma := \partial\Omega$, and when $N > 1$, $\Gamma$ is a Lipschitz boundary with the unit outer normal $n_\Gamma$. 
  \item[(A1)] $\mu > 0$ and $\nu > 0$ are fixed constants. $M_\eta \geq 0$, $M_\theta \geq 0$ are fixed constants. $L_u \geq 0$, $L_v \geq 0$, $M_u \geq 0$ and $M_v \geq 0$ are fixed constants, such that:
  \begin{align}
    \mbox{If }L_u M_u = 0 \ (\mbox{resp. } L_v M_v = 0), \mbox{ then } M_u = L_u = 0 \ (\mbox{resp. } L_v = M_v = 0).
\end{align}
  Besides, the target profiles $\eta_\ad \in H$ and $\theta_\ad \in H$ are given functions. 
  \item[(A2)] $g: \R \longrightarrow \R$ is a $C^1$-class function with a nonnegative primitive $G \in C^2(\R)$. Moreover, $g$ satisfies the following condition:
  \begin{equation}
    \liminf_{\xi \downarrow -\infty} g(\xi) = -\infty, \ \limsup_{\xi \uparrow \infty} g(\xi) = \infty.
  \end{equation}
  \item[(A3)] $\alpha_0: \R \longrightarrow (0,\infty)$ is a $C^1$-class function, and $\alpha : \R \longrightarrow [0,\infty)$ is a $C^2$-class convex function, such that:
      \begin{equation}
        \alpha'(0) = 0, \, \alpha'' \geq 0 \mbox{ on } \R, \ \mbox{ and } \delta_* := \inf \alpha_0(\R) > 0.
      \end{equation}
  \item[(A4)] The initial data $[\eta_0, \theta_0]$ belongs to the class $[V \cap L^\infty(\Omega)] \times V$.
  \item[(A5)] Let $\overline{u}, \underline{u} \in L^\infty(Q)$ be obstacles and let $\sK$ be the constraint as in Remark \ref{constraint}, which provide a class of admissible controls $\sU_\ad$, defined by
  \begin{equation}
    \sU_\ad := \{ [\tilde u,\tilde v] \in [\sH]^2 \,|\, \tilde u \in \sK \}.
  \end{equation}
\end{itemize}

Now, the main results are stated as follows.
\begin{mTh}[Existence of optimal control] \label{mth1}
  Under the assumptions (A0)--(A5), the problem (OCP)$_\varepsilon$ has at least one optimal control $[u_\varepsilon^*, v_\varepsilon^*] \in \sU_\ad$.
\end{mTh}

\begin{mTh}[Semi-continuous dependence with respect to $\varepsilon$ and initial data] \label{mth2}
  Let us assume the assumptions (A0)--(A5). In addition, we suppose that sequences $\{ \varepsilon_n \}_{n \in \N} \subset [0,\infty)$, $\{[\eta_{0,n}, \theta_{0,n}]\}_{n \in \N} \subset [V]^2$ satisfy the following convergences:
  \begin{equation}
    \varepsilon_n \to \varepsilon, \mbox{ and } [\eta_{0,n}, \theta_{0,n}] \to [\eta_0, \theta_0] \mbox{ in } [V]^2 \mbox{ as } n \to \infty.
  \end{equation}
  Let $[u_n^*, v_n^*] \in \sU_\ad$ be the optimal control corresponding to $\varepsilon_n$ and the initial data $[\eta_{0,n}, \theta_{0,n}]$. Then, there exists a subsequence $\{ n_j \}_{j \in \N} \subset \{ n \}$, and a pair of functions $[u^*, v^*]$ such that
  \begin{equation}
    \left\{ \begin{aligned}
      &\bullet \ [\sqrt{M_u} u_{n_j}^*, \sqrt{M_v} v_{n_j}^*] \to [\sqrt{M_u} u^*, \sqrt{M_v} v^*] \mbox{ weakly in } [\sH]^2 \mbox{ as } j \to \infty,
      \\
      &\bullet \ [u^*, v^*] \mbox{ is an optimal control of (OCP)$_\varepsilon$}.
    \end{aligned} \right.
  \end{equation}
\end{mTh}

\begin{mTh}[First-order necessary conditions for optimality when $\varepsilon > 0$] \label{mth3}
  Let $\varepsilon > 0$, $[u^*, v^*]$ be the optimal control of (OCP)$_\varepsilon$, and $[\eta_\varepsilon^*, \theta_\varepsilon^*]$ be the solution to the state system (S)$_\varepsilon$ corresponding to the initial data $[\eta_0, \theta_0]$ and to the forcings $[u^*, v^*]$. Then, under the assumptions (A0)--(A5), for the optimal control $[u^*, v^*]$, it holds that:
  \begin{subequations}\label{NC0}
    \begin{equation}
      (L_u p_\varepsilon^* + M_u u^*, h-u^*)_\sH \geq 0, \ \mbox{ for any } h \in \sH, \label{NC0a}
    \end{equation}
    and
    \begin{equation}
      L_v z_\varepsilon^* + M_v v^* = 0 \mbox{ in } \sH. \label{NC0b}
    \end{equation}
  \end{subequations}
  In this context, the pair of functions $[p_\varepsilon^*, z_\varepsilon^*]$ solves the following variational system:
  \begin{gather}
    \bigl(\bigl( -\partial_t p_\varepsilon^* + g'(\eta_\varepsilon^*)p_\varepsilon^* + \alpha''(\eta_\varepsilon^*) \gamma_\varepsilon(\nabla \theta_\varepsilon^*) p_\varepsilon^* + \alpha_0'(\eta_\varepsilon^*) \partial_t \theta_\varepsilon^* z_\varepsilon^* + \alpha'(\eta_\varepsilon^*) \nabla \gamma_\varepsilon(\nabla \theta_\varepsilon^*) \cdot \nabla z_\varepsilon^*\bigr)(t), \varphi\bigr)_H 
    \\
    + (\nabla (p_\varepsilon^* - \mu^2 \partial_t p_\varepsilon^*)(t), \nabla \varphi)_{[H]^N} = (M_\eta(\eta_\varepsilon^* - \eta_\ad)(t), \varphi)_H, \label{adj_1}
    \\
    \mbox{ for any } \varphi \in V, \mbox{ and a.e. } t \in (0,T).
  \end{gather}
  and
  \begin{gather}
    \bigl(\bigl( -\alpha_0(\eta_\varepsilon^*) \partial_t z_\varepsilon^* - \alpha_0'(\eta_\varepsilon^*) \partial_t \eta_\varepsilon^* z_\varepsilon^* \bigr)(t), \psi\bigr)_H + (\alpha(\eta_\varepsilon^*(t)) \nabla^2 \gamma_\varepsilon(\nabla \theta_\varepsilon^*(t)) \nabla z_\varepsilon^*(t), \nabla \psi)_{[H]^N}
    \\
    + \bigl((- \nu^2 \nabla \partial_t z_\varepsilon^* + \alpha'(\eta_\varepsilon^*) p_\varepsilon^*\nabla\gamma_\varepsilon(\nabla \theta_\varepsilon^*))(t) , \nabla \psi\bigr)_{[H]^N} = (M_\theta(\theta_\varepsilon^* - \theta_\ad)(t), \psi)_H,\label{adj_2}
    \\
     \mbox{ for any } \psi \in V, \mbox{ and a.e. } t \in (0,T).
  \end{gather}
  with a terminal condition:
  \begin{equation}
    p_\varepsilon^*(T) = 0, \ z_\varepsilon^*(T) = 0, \ \mbox{ in } H. \label{adj_3}
  \end{equation}
\end{mTh}

\begin{mTh}[Necessary condition for optimality when $\varepsilon = 0$] \label{mth4}
  Let $\sW$ be a closed linear subspace of $\sH$, defined as:
  \begin{equation}
    \sW := \{ w \in W^{1,2}(0,T;V) \,|\, w(0) = 0 \mbox{ in } V \}.
  \end{equation}
  Under the assumptions (A0)--(A5), there exist an optimal control $[u^\circ, v^\circ] \in \sU_\ad$ of (OCP)$_0$ with the unique solution $[\eta^\circ, \theta^\circ] \in [\sH]^2$ to (S)$_0$ for the initial data $[\eta_0, \theta_0]$ and forcing pair $[u^\circ, v^\circ]$, a pair of functions $[p^\circ, z^\circ] \in [\sH]^2$, a triplet of functions $[\xi^\circ, \sigma^\circ, \varpi^\circ] \in \sH \times \sH \times [L^\infty(Q)]^N$ and a distribution $\mathfrak{z}^\circ \in \sW^*$ such that:
  \begin{subequations} \label{mth4_01}
    \begin{equation}
    (L_u p^\circ + M_u u^\circ, h - u^\circ)_\sH \geq 0, \ \mbox{ for any } h \in \sH, \label{mth4_01a}
  \end{equation}
  \begin{equation}
    L_v z^\circ + M_v v^\circ = 0 \mbox{ in } \sH, \label{mth4_01b}
  \end{equation}
  \end{subequations}
  \begin{gather}
    p^\circ \in W^{1,2}(0,T;V), \, z^\circ \in L^\infty(0,T;V), \label{mth4_02}
    \\
    \varpi^\circ \in \Sgn(\nabla \theta^\circ) \mbox{ a.e. in } Q,
  \end{gather}
  \begin{gather}
    \bigl( \bigl( -\partial_t p^\circ + \bigl( g'(\eta^\circ)+ \alpha''(\eta^\circ) |\nabla \theta^\circ| \bigr) p^\circ + \alpha_0'(\eta^\circ) \xi^\circ + \alpha'(\eta^\circ) \sigma^\circ \bigr)(t), \varphi \bigr)_H 
    \\
    + (\nabla (p^\circ - \mu^2 \partial_t p^\circ)(t), \nabla \varphi)_{[H]^N} = (M_\eta(\eta^\circ - \eta_\ad)(t), \varphi)_H, \label{mth4_03}
    \\
    \mbox{ for any } \varphi \in V, \mbox{ and a.e. } t \in (0,T), \mbox{ with } p^\circ(T) = 0 \mbox{ in } \sH,
  \end{gather}
  and
  \begin{gather}
    (\alpha_0(\eta) z^\circ, \partial_t \psi)_\sH + \langle \mathfrak{z}^\circ, \psi \rangle_\sW + \nu^2(\nabla z^\circ, \nabla \partial_t \psi )_{[\sH]^N} + (\alpha'(\eta^\circ) p^\circ \varpi^\circ, \nabla \psi)_{[\sH]^N} 
    \\
    = (M_\theta (\theta^\circ - \theta_\ad), \psi)_\sH, \ \mbox{ for any } \psi \in \sW. \label{mth4_04}
  \end{gather}
\end{mTh}

\begin{rem}
  If $M_u > 0$ and $L_u > 0$, then \eqref{proj01} enables us to rewrite the variational inequality given in \eqref{NC0a} and \eqref{mth4_01a} as:
  \begin{equation}
    u^* = \proj_\sK\Bigl(- \frac{L_u}{M_u}p_\varepsilon^* \Bigr) \mbox{ in } \sH, \mbox{ and } u^\circ = \proj_\sK\Bigl(-\frac{L_u}{M_u} p^\circ \Bigr) \mbox{ in } \sH.
  \end{equation}
  respectively.
\end{rem}

\section{Key-lemmas}

\subsection{State equation}
In this subsection, we recall some preceding results regarding to pseudo-parabolic KWC systems (S)$_\varepsilon$ (cf. \cite{MR4797677,mizuno2024weak}).

\begin{keylem}[Solvability of (S)$_\varepsilon$, uniqueness and regularity of solution] \label{lem:solvability}
  We assume \\
  (A0)--(A4), and we let $u \in L^\infty(Q)$ and $v \in \sH$. Then, (S)$_\varepsilon$ admits a unique solution $[\eta, \theta] \in [\sH]^2$ in the sense that:
  \begin{equation}
    \eta \in W^{1,2}(0,T;V) \cap L^\infty(Q), \ \theta \in W^{1,2}(0,T;V), \mbox{ and } [\eta(0), \theta(0)] = [\eta_0, \theta_0] \mbox{ in } V,
  \end{equation}
  \begin{gather}
    (\partial_t \eta(t) + g(\eta(t)) + \alpha'(\eta(t)) \gamma_\varepsilon(\nabla \theta(t)), \varphi)_H + (\nabla (\eta + \mu^2 \partial_t \eta)(t), \nabla \varphi)_{[H]^N} 
    \\
    = (L_u u(t), \varphi)_H, \ \mbox{ for any } \varphi \in V, \mbox{ and a.e. } t \in (0,T), \label{(S)1st_VI}
  \end{gather}
  and
  \begin{gather}
    (\alpha_0(\eta(t))\partial_t \theta(t), \theta(t) - \psi)_H + \int_\Omega \alpha(\eta(t)) \gamma_\varepsilon(\nabla \theta(t))\,dx + \nu^2 (\nabla \partial_t \theta(t), \nabla (\theta(t) - \psi))_{[H]^N}
    \\
    \leq \int_\Omega \alpha(\eta(t)) \gamma_\varepsilon(\nabla \psi) \,dx + (L_v v(t), \theta(t) - \psi)_H, \label{(S)2nd_VI}
    \\
    \mbox{ for any } \psi \in V, \ \mbox{ and a.e. } t \in (0,T).
  \end{gather}
\end{keylem}

\begin{rem}
  When $\varepsilon > 0$, standard variational methods allow us to rewrite \eqref{(S)2nd_VI} by:
  \begin{gather}
    (\alpha_0(\eta(t)) \partial_t \theta(t), \psi)_H + (\alpha(\eta(t)) \nabla \gamma_\varepsilon(\nabla \theta(t)) + \nu^2 \nabla \partial_t \theta(t), \nabla \psi)_{[H]^N} = (L_v v(t), \psi)_H,
    \\
    \mbox{for any } \psi \in V, \ \mbox{ and a.e. } t \in (0,T). 
  \end{gather}
\end{rem}

\begin{rem}
  Although a smooth boundary is assumed in the previous work on the pseudo-parabolic KWC system \cite{mizuno2024weak}, the assertions in Key Lemma~\ref{lem:solvability} remain valid under the weaker assumption that the boundary $\Gamma$ is Lipschitz continuous. Furthermore, if $N \in {1,2,3}$, the boundary $\Gamma$ is of class $C^4$, and 
  \begin{equation}
  \{ \eta_0, \theta_0 \} \subset W_0 := \left\{ w \in H^2(\Omega) \,|\, \nabla w \cdot n_\Gamma = 0 \ \mbox{ a.e. on } \Gamma \right\},
  \end{equation}
  then the regularity of the solution $[\eta, \theta]$ improves to
  \begin{equation}
    \eta \in W^{1,2}(0,T; W_0), \quad \theta \in L^\infty(0,T; W_0).
  \end{equation}
\end{rem}

\begin{keylem}[Continuous dependence of solution] \label{lem:CD}
  Let us assume (A0)--(A3). Let 
  \\
  $\{ \varepsilon_n \}_{n=1}^\infty \subset [0,\infty)$, $\{ [\eta_{0,n}, \theta_{0,n}] \}_{n=1}^\infty \subset [V \cap L^\infty(Q)] \times V$ and $\{ [u_n, v_n] \}_{n=1}^\infty \subset L^\infty(Q) \times \sH$ be sequences fulfilling the following conditions:
  \begin{equation}
    \left\{ \begin{aligned}
      &\bullet \ \sup_{n \in \N} |\eta_{0,n}|_{L^\infty(\Omega)} < \infty, \mbox{ and } \sup_{n \in \N} |u_n|_{L^\infty(Q)} < \infty,
      \\
      &\bullet \ \begin{gathered}
        \varepsilon_n \to \varepsilon \mbox{ in } \R, \, \eta_{0,n} \to \eta_0, \, \theta_{0,n} \to \theta_0 \mbox{ in } V, \mbox{ and }
        \\
        \sqrt{L_u} u_n \to \sqrt{L_u} u, \, \sqrt{L_v} v_n \to \sqrt{L_v} v \mbox{ weakly in } \sH, \mbox{ as } n \to \infty.
      \end{gathered}
    \end{aligned} \right.
  \end{equation}
  Let $[\eta, \theta]$ be the unique solution to (S)$_\varepsilon$, corresponding to the initial data $[\eta_0, \theta_0]$ and the forcings $[u,v]$, and let $[\eta_n, \theta_n]$ be the unique solution to (S)$_{\varepsilon_n}$, for the initial data $[\eta_{0,n}, \theta_{0,n}]$ and the forcings $[u_n,v_n]$. Then, we can obtain the following convergences as $n \to \infty$:
  \begin{equation}
    \left\{ \begin{aligned}
      &\eta_n \to \eta \mbox{ in } C([0,T];H), \, L^2(0,T;V), \mbox{ weakly in } W^{1,2}(0,T;V)
      \\
      &\qquad\qquad\mbox{ and weakly-$*$ in } L^\infty(Q),
      \\
      &\theta_n \to \theta \mbox{ in } C([0,T];H), \, L^2(0,T;V) \mbox{ and weakly in } W^{1,2}(0,T;V),
      \\
      &\eta_n(t) \to \eta(t), \, \theta_n(t) \to \theta(t) \mbox{ in } V, \mbox{ for all } t \in [0,T].
    \end{aligned}\right.
  \end{equation}
  In addition, if $N \in \{ 1,2,3 \}$, $\Gamma$ has $C^4$-regularity, and the sequence $\{ [\eta_{0,n}, \theta_{0,n}] \}_{n=1}^\infty$ satisfies that
  \begin{equation}
    \eta_{0,n} \to \eta_0, \, \theta_{0,n} \to \theta_0 \mbox{ weakly in } W_0 \mbox{ as } n \to \infty,
  \end{equation}
  then we can derive stronger convergences such that:
  \begin{equation}
    \left\{ \begin{aligned}
      &\eta_n \to \eta \mbox{ in } C([0,T];V), \mbox{ and weakly in } W^{1,2}(0,T;W_0),
      \\
      &\theta_n \to \theta \mbox{ in } C([0,T];V), \mbox{ and weakly-$*$ in } L^\infty(0,T;W_0),
    \end{aligned} \right. \mbox{ as } n \to \infty.
  \end{equation}
\end{keylem}

\subsection{Pseudo-parabolic linear system associated with (OCP)$_\varepsilon$}
In this section, we consider a pseudo-parabolic linear system, denoted by (P), and given by:
  \begin{align}
      &\mbox{(P)}:
      \\
      & \begin{cases}
          \partial_t p -\mathit{\Delta} \bigl( p +\mu^2 \partial_t p \bigr) + \lambda(t,x) p + \xi(t,x) z + \omega(t,x) \cdot \nabla z = h(t, x), ~ \mbox{for $ (t, x) \in Q $,}
          \\[0.5ex]
          \nabla \bigl( p +\mu^2 \partial_t p \bigr) \cdot n_\Gamma =  0, ~ \mbox{on $ \Sigma $,}
          \
          \\[0.5ex]
          p(0, x) = p_0(x),~ \mbox{for $ x \in \Omega $,}
      \end{cases}
      \label{(P)1st}
      \\[1ex]
      & \begin{cases}
          \ds a(t,x) \partial_t z + b(t,x) z + c(t,x) p -\mathrm{div} \left( A(t,x) \nabla z + \nu^2 \nabla \partial_t z + p \omega(t,x) \right) 
          \\
          \qquad\qquad = k(t, x), ~ \mbox{for $ (t, x) \in Q $,}
          \\[1ex]
          \left( A(t,x) \nabla z + \nu^2 \nabla \partial_t z + p \omega \right) \cdot n_\Gamma = 0 ~ \mbox{on $ \Sigma $,}
          \\[0.5ex]
          z(0, x) = z_0(x),~ \mbox{for $ x \in \Omega $.}
      \end{cases}
      \label{(P)2nd}
  \end{align}
In this context, $[p,z]:Q \longrightarrow [\R^2]$ is the unknown of the system (P). $[p_0, z_0] \in [V]^2$ is the initial data of $[p,z]$, and $[h,k] \in [\sV^*]^2$ is a forcing pair. $[a,b,c,\lambda,\xi,\omega,A] \subset [\sH]^7$ is a given septuplet of functions, which belongs to a subclass $\sS \subset [\sH]^7$, defined as follows:
\begin{equation}
  \sS := \left\{ [\tilde a,\tilde b,\tilde c,\tilde \lambda,\tilde \xi,\tilde \omega,\tilde A] \in [\sH]^7 \left| \begin{aligned}
    &\bullet \tilde a \in W^{1,2}(0,T;H), \mbox{ and } \inf \tilde a(\R) \geq \delta_a > 0,
    \\
    &\bullet \ \tilde \omega \in [L^\infty(Q)]^N,
    \\
    &\bullet \ \tilde A \in [L^\infty(Q)]^{N \times N}, \mbox{ and the value $\tilde A(t,x) \in \R^{N \times N}$ } 
    \\
    &\quad \mbox{ is symmetric and positive, for a.e. $(t,x) \in Q$},
  \end{aligned} \right. \right\}
\end{equation}
where $\delta_a > 0$ is a given constant.

The problem (P) is considered in the following two contexts:
\begin{itemize}
  \item[$\sharp 1$)] linearized system of (S)$_\varepsilon$ associated with the G\^{a}teaux derivative $\sJ_\varepsilon'$ of the cost functional $\sJ_\varepsilon$ at $[u,v] \in \sU_\ad$,
  \item[$\sharp 2$)] the adjoint problem of the linearized system $\sharp 1$). 
\end{itemize}
Let $[\eta,\theta]$ be the solution to (S)$_\varepsilon$. Then, the linearized system $\sharp 1)$ corresponds to the system (P) under:
\begin{gather}
  \left\{ \begin{aligned}
    &a(t,x) = [\alpha_0(\eta)](t,x), \ b(t,x) = 0, \ c(t,x) = [\alpha_0'(\eta) \partial_t \theta](t,x), 
    \\ 
    &\lambda(t,x) = [g'(\eta) + \alpha''(\eta) \gamma_\varepsilon(\nabla \theta)](t,x), \ \xi(t,x) = 0,
    \\
    &\omega(t,x) = [\alpha'(\eta) \nabla \gamma_\varepsilon(\nabla \theta)](t,x), \ A = [\alpha(\eta) \nabla^2 \gamma_\varepsilon(\nabla \theta)](t,x),
    \\
    &[p_0, z_0](x) = [0,0],
  \end{aligned} \right. \label{(P)ex1}
  \\
  \mbox{ for a.e. } (t,x) \in Q. 
\end{gather}
On the other hand, the adjoint problem $\sharp 2)$ coincides with the system (P) for the case when:
\begin{gather}
  \left\{ \begin{aligned}
    &a(t,x) = [\alpha_0(\eta)](T-t,x), \ b(t,x) = [\alpha_0'(\eta) \partial_t \eta](T-t,x), \ c(t,x) = 0,
    \\ 
    &\lambda(t,x) = [g'(\eta) + \alpha''(\eta) \gamma_\varepsilon(\nabla \theta)](T-t,x), \ \xi(t,x) = [\alpha_0'(\eta) \partial_t \theta](T-t,x),
    \\
    &\omega(t,x) = [\alpha'(\eta) \nabla \gamma_\varepsilon(\nabla \theta)](T-t,x), \ A = [\alpha(\eta) \nabla^2 \gamma_\varepsilon(\nabla \theta)](T-t,x),
    \\
    &[p_0, z_0](x) = [0,0],
  \end{aligned} \right. \label{(P)ex2}
  \\
  \mbox{ for a.e. } (t,x) \in Q. 
\end{gather}

Previously, the parabolic type version of (P) is studied in \cite{MR3888633}, which established well-posedness results on the parabolic system using time-discretization method. However, \cite{MR3888633} aims to apply their results to an optimal control problems of (S)$_\varepsilon$ under the setting that $\alpha_0$ does not depend on $\eta$, which is improved in our studies \cite{MR4797677,mizuno2024weak}. Also, the difference between our studies and \cite{MR3888633} leads to additional terms $\alpha_0'(\eta) \partial_t \eta$ and $\alpha_0'(\eta) \partial_t \theta$, which may pose some difficulties in the analysis under standard regularities for parabolic type PDEs. On the other hand, we can expect that the regularizing effect of the pseudo-parabolic structure of (P) enables us to derive some well-posedness results although (P) includes some terms difficult to handle. Based on these, we set our goal of this subsection to establish a well-posedness result, which is useful on the optimal control problems.

\begin{keylem}[Solvability of (P)] \label{lem:Psol}
  For any $[a,b,c,\lambda,\xi,\omega,A] \in \sS$, the pseudo-parabolic system (P) admits a unique solution in the sense that:
  \begin{equation}
    p,z \in W^{1,2}(0,T;V), \mbox{ and } p(0) = p_0, \, z(0) = z_0 \mbox{ in } H,
  \end{equation}
  \begin{gather}
    (\partial_t p(t) + \omega(t) \cdot \nabla z(t), \varphi)_H + \int_\Omega (\lambda(t) p(t) + \xi(t)z(t)) \varphi \,dx +(\nabla p(t), \nabla \varphi)_{[H]^N} \label{P_var1}
    \\
    + \mu^2 (\nabla \partial_t p(t), \nabla \varphi)_{[H]^N} = \langle h(t),\varphi \rangle_V, \mbox{ for any } \varphi \in V, \mbox{ and for a.e. } t \in (0,T),
  \end{gather}
  and
  \begin{gather}
    \int_\Omega (a(t) \partial_t z(t) + b(t) z(t) + c(t) p(t)) \psi \,dx + (A(t) \nabla z(t) + p(t)\omega(t), \nabla \psi)_{[H]^N} \label{P_var2}
    \\
    + \nu^2(\nabla \partial_t z(t), \nabla \psi)_{[H]^N} = \langle k(t),\psi \rangle_V, \ \mbox{ for any }  \psi \in V, \ \mbox{ and for a.e. } t \in (0,T).
  \end{gather}
\end{keylem}

The proof of Key-Lemma \ref{lem:Psol} is based on time-discretization method. Here, we let $\tau \in (0,1)$ be a time-step size, and let $n_\tau := \{n \in \N \,|\, n \tau \geq T \}$. We adopt the following time-discretization scheme (TD)$_\tau$:

\smallskip
(TD)$_\tau$: find a sequence $\{ [p_i, z_i] \}_{i=0}^\infty \subset [V]^2$ such that:
\begin{align}
  &\left\{ \begin{gathered}
    \frac{1}{\tau}(p_i - p_{i-1}, \varphi)_H + \int_\Omega (\lambda_i p_i + \xi_i z_{i-1}) \varphi \,dx + (\omega_i \cdot \nabla z_i, \varphi)_H + (\nabla p_i, \nabla \varphi)_{[H]^N} 
    \\
    + \frac{\mu^2}{\tau} (\nabla (p_i - p_{i-1}), \nabla \varphi)_{[H]^N} = \langle h_i, \varphi\rangle_V, \ \mbox{ for any } \varphi \in V,
  \end{gathered} \right. \label{P_TD1}
  \\
  &\left\{ \begin{gathered}
    \frac{1}{\tau} \int_\Omega a_i \psi (z_i - z_{i-1}) \,dx + \int_\Omega (b_i z_i + c_i p_{i-1}) \psi \,dx + (A_i \nabla z_i + p_i \omega_i, \nabla \psi)_{[H]^N} 
    \\
    + \frac{\nu^2}{\tau} (\nabla(z_i - z_{i-1}), \nabla \psi)_{[H]^N} = \langle k_i, \psi \rangle_V, \ \mbox{ for any } \psi \in V,
  \end{gathered} \right. \label{P_TD2}
  \\
  &\qquad \mbox{ for any } i = 1,2,3,\dots, \mbox{ with $[p_0,z_0]$ is the initial data of $[p,z]$}. 
\end{align}
In this context, the sequences of forcing pairs $\{ [h_i, k_i] \}_{i=1}^\infty \in [V^*]^2$ and septuplets 
\\
$\{ [a_i, b_i, c_i, \lambda_i, \xi_i, \omega_i, A_i] \}_{i=1}^\infty \subset [H]^7$ is given as the time-discretization data of $[h,k]$ and $[a,b,c,\lambda,\xi,\omega,A]$ as in \eqref{tI01} and \eqref{tI05}, respectively. On account of \eqref{tI}, the following convergences hold as $\tau \downarrow 0$:\noeqref{tI02,tI03,tI04,tI05,tI06}
\begin{gather}
  [\overline{h}]_\tau \to h, \ [\overline{k}]_\tau \to k,  \mbox{ in } \sV^*, 
\end{gather}
\begin{gather}
  [\overline{a}]_\tau \to a \mbox{ in } L^\infty(0,T;H),
  \\
  [\overline{b}]_\tau \to b, \ [\overline{c}]_\tau \to c, [\overline{\lambda}]_\tau \to \lambda, [\overline{\xi}]_\tau \to \xi, \mbox{ in } \sH,
  \\
  [\overline{\omega}]_\tau \to \omega \mbox{ weakly-$*$ in } [L^\infty(Q)]^N \mbox{ and in the pointwise sense a.e. in $Q$},
\end{gather}
and
\begin{gather}
  [\overline{A}]_\tau \to A \mbox{ weakly-$*$ in } [L^\infty(Q)]^{N \times N} \mbox{ and in the pointwise sense a.e. in $Q$}.
\end{gather}

\begin{rem} \label{emb}
  Since $N \in \{ 1,2,3,4 \}$ and $\Omega \subset \R^N$ is a bounded domain, $V$ is continuously embedded in $L^4(\Omega)$. Hereafter, $C_V^{L^4}$ denotes the constant of embedding from $V$ to $L^4(\Omega)$. Also, for any $w^\circ \in V$ and any sequence $\{ w_i^\circ \}_{i = 1}^\infty \subset V$ such that 
    \begin{equation}
    w_i^\circ \to w^\circ \mbox{ weakly in } V \mbox{ as } i \to \infty,
  \end{equation}
  one can confirm that
  \begin{equation}
    w_i^\circ \to w^\circ \mbox{ weakly in } L^4(\Omega) \mbox{ as } i \to \infty. 
  \end{equation}
  Besides, for any $\varphi \in L^4(\Omega)$, it is checked that
  \begin{equation}
    w_i^\circ \varphi \to w^\circ \varphi \mbox{ weakly in } H \mbox{ as } i \to \infty. \label{emb01}
  \end{equation}
  Similarly, for any $\varphi \in L^\infty(0,T;L^4(\Omega)), \ w \in \sV$ and for any sequence $\{ w_i \}_{i=1}^\infty \subset \sV$ such that
  \begin{equation}
    w_i \to w \mbox{ weakly in } \sV \mbox{ as } i \to \infty,
  \end{equation} 
  it can be checked that
  \begin{equation}
    w_i \varphi \to w \varphi \mbox{ weakly in } \sH \mbox{ as } i \to \infty. \label{emb02}
  \end{equation}
  
\end{rem}

To find the solution to (TD)$_\tau$, we prepare a fundamental lemma.

\begin{keylem} \label{lem:P_TD}
  We let $[a^\circ,b^\circ,c^\circ,\lambda^\circ,\xi^\circ,\omega^\circ,A^\circ] \in [H]^7$ be a fixed septuplet of functions satisfying that 
  \begin{equation}
    \left\{ \begin{aligned}
      &|b^\circ|_H \leq \tau^{-\frac{1}{2}}|b|_\sH, \ |\lambda^\circ|_H \leq \tau^{-\frac{1}{2}}|\lambda|_\sH, \ |\omega^\circ|_{[L^\infty(\Omega)]^N} \leq |\omega|_{[L^\infty(Q)]^N}, \ \inf a^\circ(\R) \geq \delta_a > 0, 
      \\
      &|A^\circ|_{[L^\infty(\Omega)]^{N \times N}} \leq |A|_{[L^\infty(Q)]^{N \times N}}, \mbox{ and $A^\circ$ is positive and symmetric, a.e. on $\Omega$}. 
    \end{aligned} \right. \label{P_exist0}
  \end{equation}
  Also, we assume that:  
  \begin{align}
    0 < \tau < \tau_1 &= \tau_1(a,b,\lambda,\omega) := \frac{1 \wedge \delta_a \wedge \mu^2 \wedge \nu^2}{8 \bigl( (C_V^{L^4})^2 + 1 \bigr)\bigl( |\lambda|_\sH + |b|_\sH + |\omega|_{L^\infty(Q)} + 1\bigr)}, \label{P_exist1}
  \end{align}
  so that:
  \begin{equation}
    \left\{ \begin{aligned}
      &\tau \cdot \frac{\bigl( (C_V^{L^4})^2 + 1 \bigr)}{1 \wedge \mu^2} \Bigl( |\lambda^\circ|_H + |\omega^\circ|_{[L^\infty(\Omega)]^N} + 1 \Bigr) < \frac{1}{8},
      \\
      &\tau \cdot \frac{\bigl( (C_V^{L^4})^2 + 1 \bigr)}{1 \wedge \delta_a \wedge \nu^2} \Bigl( |b^\circ|_H + |\omega^\circ|_{[L^\infty(\Omega)]^N} + 1 \Bigr) < \frac{1}{8},
    \end{aligned} \right. \ \mbox{ for every } \tau \in (0,\tau_1). \label{P_exist2}
  \end{equation}
  Then, for any $[p^\dagger, z^\dagger] \in [V]^2$ and $[h^\circ, k^\circ] \in [V^*]^2$, the following variational system admits a unique solution $[p,z] \in [V]^2$:
  \begin{align}
    &\left\{ \begin{gathered}
      \frac{1}{\tau}(p - p^\dagger, \varphi)_H + \int_\Omega (\lambda^\circ p + \xi^\circ z^\dagger) \varphi \,dx + (\omega^\circ \cdot \nabla z, \varphi)_H + (\nabla p, \nabla \varphi)_{[H]^N} 
      \\
      + \frac{\mu^2}{\tau} (\nabla (p - p^\dagger), \nabla \varphi)_{[H]^N} = \langle h^\circ, \varphi \rangle_V, \ \mbox{ for any } \varphi \in V,
    \end{gathered} \right. \label{P_TD3}
    \\
    &\left\{ \begin{gathered}
      \frac{1}{\tau} \int_\Omega a^\circ \psi (z - z^\dagger) \,dx + \int_\Omega (b^\circ z + c^\circ p^\dagger) \psi \,dx + (A^\circ \nabla z + p \omega^\circ, \nabla \psi)_{[H]^N} 
      \\
      + \frac{\nu^2}{\tau} (\nabla(z - z^\dagger), \nabla \psi)_{[H]^N} = \langle k^\circ, \psi \rangle_V, \ \mbox{ for any } \psi \in V.
    \end{gathered} \right. \label{P_TD4}
  \end{align}
\end{keylem}

\begin{proof}
  We define a (non-convex) functional $\Upsilon: [H]^2 \longrightarrow (-\infty,\infty]$, by:
  \begin{align}
    &\Upsilon: [p,z] \in [H]^2 \mapsto \Upsilon(p,z) 
    \\
    &\quad := \left\{ \begin{aligned}
      &\frac{1}{2\tau} \bigl(|p - p^\dagger|_H^2 + |\sqrt{a^\circ}(z - z^\dagger)|_H^2\bigr) + \frac{1}{2}|\nabla p|_{[H]^N}^2
      \\
      &\quad + \frac{1}{2\tau} \bigl(\mu^2|\nabla(p - p^\dagger)|_{[H]^N}^2 + \nu^2|\nabla (z - z^\dagger)|_{[H]^N}^2\bigr)
      \\
      &\quad + \int_\Omega \bigl(\lambda^\circ |p|^2 + b^\circ |z|^2 + \xi^\circ z^\dagger p + c^\circ p^\dagger z + (\omega^\circ \cdot \nabla z) p + |A^{\frac{1}{2}} \nabla z|^2\bigr) \,dx
      \\
      &\quad - \langle h^\circ, p \rangle_V - \langle k^\circ,z \rangle_V, \mbox{ if } [p,z] \in V,
      \\[1ex]
      &+\infty, \ \mbox{otherwise}.
    \end{aligned} \right.
  \end{align}
  On account of \eqref{P_exist1} and \eqref{P_exist2}, we can check that $\Upsilon$ is proper and weakly l.s.c. on $[H]^2$, via the following lower-bound estimate of $\Upsilon$:
  \begin{align} 
    \Upsilon(p,z) &\geq \frac{1 \wedge \delta_a \wedge \mu^2 \wedge \nu^2}{8\tau} (|p|_V^2 + |z|_V^2) - \left( \frac{1 \wedge \mu^2}{2\tau} + \frac{(C_V^{L^4})^2}{2}|c^\circ|_H^2 \right)|p^\dagger|_V^2 
    \\
    &\quad - \left(\frac{\delta_a \wedge \nu^2}{2\tau} + \frac{(C_V^{L^4})^2}{2}|\xi^\circ|_H^2 \right) |z^\dagger|_V^2 - \frac{1}{2}(|h^\circ|_H^2 + |k^\circ|_H^2),
    \\
    &\qquad\qquad\qquad\qquad \mbox{ for any } [p,z] \in [V]^2,
  \end{align}
  which is obtained by the following computations:
  \begin{align}
    &\frac{1}{2\tau} \bigl(|p - p^\dagger|_H^2 + |\sqrt{a^\circ}(z - z^\dagger)|_H^2\bigr) + \frac{1}{2\tau} \bigl(\mu^2|\nabla(p - p^\dagger)|_{[H]^N}^2 + \nu^2|\nabla (z - z^\dagger)|_{[H]^N}^2\bigr)
    \\
    &\quad \geq \frac{1}{2 \tau}\bigl( (1 \wedge \mu^2)|p - p^\dagger|_V^2 + (\delta_a \wedge \nu^2) |z - z^\dagger|_V^2 \bigr)
    \\
    &\quad \geq \frac{1}{4 \tau}\bigl( (1 \wedge \mu^2)|p|_V^2 + (\delta_a \wedge \nu^2) |z|_V^2 \bigr) - \frac{1}{2 \tau}\bigl( (1 \wedge \mu^2)|p^\dagger|_V^2 + (\delta_a \wedge \nu^2) |z^\dagger|_V^2 \bigr),
  \end{align}
  \begin{align}
    &\quad \int_\Omega (\lambda^\circ |p|^2 + b^\circ |z|^2) \,dx + \int_\Omega (\omega^\circ \cdot \nabla z) p \,dx
    \\
    &\geq -(C_V^{L^4})^2 |\lambda^\circ|_H |p|_V^2 - (C_V^{L^4})^2 |b^\circ|_H |z|_V^2 - |\omega^\circ|_{[L^\infty(\Omega)]^N} |\nabla z|_{[H]^N} |p|_H
    \\
    &\geq -((C_V^{L^4})^2 + 1) \Bigl((|\lambda^\circ|_H + |\omega^\circ|_{[L^\infty(\Omega)]^N}) |p|_V^2 + (|b^\circ|_H + |\omega^\circ|_{[L^\infty(\Omega)]^N}) |z|_V^2 \Bigr),
  \end{align}
  \begin{align}
    \int_\Omega (\xi^\circ z^\dagger p + c^\circ p^\dagger z) \,dx \geq -\frac{(C_V^{L^4})^2}{2} \bigl( |\xi^\circ|_H^2 |z^\dagger|_V^2 + |p|_V^2 + |c^\circ|_H^2 |p^\dagger|_V^2 + |z|_V^2 \bigr),
  \end{align}
  and 
  \begin{equation}
    -\dual{h^\circ,p}_V - \dual{k^\circ,z}_V \geq -\frac{1}{2}(|h^\circ|_{V^*}^2 + |p|_V^2) - \frac{1}{2}(|k^\circ|_{V^*}^2 + |z|_V^2).
  \end{equation}
  Therefore, we can find a minimizer of $\Upsilon$, and see the minimizer solves the variational system \eqref{P_TD3}--\eqref{P_TD4} by means of the direct method of calculus of variations.

  Next, for the proof of uniqueness, let $[p^j, z^j] \in V \times V$ be the solution to the system \eqref{P_TD3}--\eqref{P_TD4} for $j = 1,2$. By taking the difference between the systems \eqref{P_TD3}--\eqref{P_TD4} for $j=1,2$, and putting $[\varphi,\psi] = [p^1 - p^2, z^1 - z^2]$, it is observed that:
  \begin{align}
    &\frac{1 \wedge \mu^2}{\tau}|p^1 - p^2|_V^2 + \frac{\delta_a \wedge \nu^2}{\tau}|z^1 - z^2|_V^2 - (C_V^{L^4})^2 |\lambda^\circ|_H |p^1 - p^2|_V^2 
    \\
    &\quad - (C_V^{L^4})^2 |b^\circ|_H |z^1 - z^2|_V^2 - |\omega^\circ|_{[L^\infty(\Omega)]^N} (|\nabla (z^1 - z^2)|_{[H]^N}^2 + |p^1 - p^2|_H^2) \leq 0.
  \end{align}
  Having in mind \eqref{P_exist2}, we obtain that
  \begin{equation}
    \frac{1 \wedge \mu^2}{2\tau}|p^1 - p^2|_V^2 + \frac{\delta_a \wedge \nu^2}{2\tau}|z^1 - z^2|_V^2 \leq 0, \mbox{ for all } \tau \in (0,\tau_1), \label{P_TD5}
  \end{equation}
  and \eqref{P_TD5} implies that the solution to the system \eqref{P_TD3}--\eqref{P_TD4} is unique.
\end{proof}

For the proof of Key-Lemma \ref{lem:Psol}, the following lemma will be pivotal.

\begin{keylem} \label{lem:P_est}
  For any $\tau \in (0,\tau_1)$, our time-discretization scheme (TD)$_\tau$ admits a unique solution. Moreover, There exists a small time-step size $\tau_2 \in (0,\tau_1)$ such that the following estimates (i) and (ii) hold.
  
  \smallskip
  \noindent
  (i) Let $C_1 = C_1(a,b,c,\lambda,\xi,\omega)$ be a positive constant which depends on $|\partial_t a|_\sH$, $|b|_\sH$, $|c|_\sH$, $|\lambda|_\sH$, $|\xi|_\sH$, and $|\omega|_{[L^\infty(Q)]^N}$, given as:
  \begin{gather}
    C_1 := \frac{8((C_V^{L^4})^2 + 1)}{1 \wedge \delta_a \wedge \mu^2 \wedge \nu^2}\bigl( |\partial_t a|_\sH + |b|_\sH + |c|_\sH + |\lambda|_\sH + |\xi|_\sH + |\omega|_{[L^\infty(Q)]^N} + 1\bigr).
  \end{gather}
  Then, it holds that:
  \begin{align}
    &|p_i|_H^2 + \mu^2 |\nabla p_i|_{[H]^N}^2 + |\sqrt{a_i}z_i|_H^2 + \nu^2 |\nabla z_i|_{[H]^N}^2 
    \\
    &\quad \leq e^{4 C_1 (T + 1)} \bigl( |p_0|_H^2 + \mu^2 |\nabla p_0|_{[H]^N}^2 + |\sqrt{a(0)}z_0|_H^2 + \nu^2 |\nabla z_0|_{[H]^N}^2 \bigr)
    \\
    &\quad\qquad + 2(|h|_{\sV^*}^2 + |k|_{\sV^*}^2), \mbox{ for any } i = 1,2,3,\dots, n_\tau. \label{P_TD_est1}
  \end{align}
  (ii) There exists a positive constant $C_2 = C_2(a) > 0$, depending on $\delta_a$ and $|a(0)|_H$ such that
  \begin{gather}
    \frac{1 \wedge \mu^2}{4\tau}\sum_{i=1}^{n_\tau} |p_i - p_{i-1}|_V^2 \leq C_2 e^{5C_1 (T + 1)} (|p_0|_V^2 + |z_0|_V^2 + |h|_{\sV^*}^2 + |k|_{\sV^*}^2), \label{P_TD_est2}
  \end{gather}
  and
  \begin{align}
    &\frac{\delta_a \wedge \nu^2}{4\tau}\sum_{i=1}^{n_\tau} |z_i - z_{i-1}|_V^2 
    \\
    &\qquad \leq C_2 e^{5 C_1 (T + 1)}(|A|_{[L^\infty(Q)]^{N \times N}}^2 + 1) (|p_0|_V^2 + |z_0|_V^2 + |h|_{\sV^*}^2 + |k|_{\sV^*}^2). \label{P_TD_est3}
  \end{align}
\end{keylem}
\begin{proof}
  We note that the time-discretization data $[a_i, b_i, c_i, \lambda_i, \xi_i, \omega_i, A_i]$ satisfy the conditions \eqref{P_exist0}--\eqref{P_exist2}. Hence, by applying Key-Lemma \ref{lem:P_TD} with
  \begin{equation}
    [a^\circ, b^\circ, c^\circ, \lambda^\circ, \xi^\circ, \omega^\circ, A^\circ] = [a_i, b_i, c_i, \lambda_i, \xi_i, \omega_i, A_i] \mbox{ in } [H]^7,
  \end{equation}
  \begin{equation}
    [p^\dagger, z^\dagger] = [p_{i-1}, z_{i-1}] \mbox{ in } [H]^2,
  \end{equation}
  and
  \begin{equation}
    [h^\circ, k^\circ] = [h_i, k_i] \in [V^*]^2,
  \end{equation}
  unique solution $\{[p_i, z_i]\}_{i=1}^\infty$ to (TD)$_\tau$ can be obtained for every $\tau \in (0,\tau_1)$.

  Next, to derive the estimate \eqref{P_TD_est1}, we multiply the both side of \eqref{P_TD1} by $p_i$. Then, by using H\"older's and Young's inequalities and continuous embedding from $V$ to $L^4(\Omega)$, we have:
  \begin{gather}
    \frac{1}{2\tau}(|p_i|_H^2 - |p_{i-1}|_H^2) + \frac{\mu^2}{2\tau} (|\nabla p_i|_{[H]^N}^2 - |\nabla p_{i-1}|_{[H]^N}^2) - (C_V^{L^4})^2 |\lambda_i|_H |p_i|_V^2 \label{P_TD_est4}
    \\
    - \frac{(C_V^{L^4})^2}{2}|\xi_i|_H (|p_i|_V^2 + |z_{i-1}|_V^2) - \frac{|\omega_i|_{L^\infty(\Omega)}}{2}(|p_i|_H^2 + |\nabla z_i|_{[H]^N}^2) \leq \frac{1}{2}(|h_i|_{V^*}^2 + |p_i|_V^2),
    \\
    \mbox{for any } i = 1,2,3,\dots, 
  \end{gather}
  via the following computations:
  \begin{gather}
    \frac{1}{\tau}(p_i - p_{i-1}, p_i)_H + |\nabla p_i|_{[H]^N}^2 + \frac{\mu^2}{\tau}(\nabla (p_i - p_{i-1}), \nabla p_i)_{[H]^N}
    \\
    \geq \frac{1}{2\tau}\bigl( |p_i|_H^2 - |p_{i-1}|_H^2 \bigr) + \frac{\mu^2}{2\tau} \bigl( |\nabla p_i|_{[H]^N}^2 - |\nabla p_{i-1}|_{[H]^N}^2 \bigr),
  \end{gather}
  \begin{align}
      &\quad \int_\Omega (\lambda_i p_i + \xi_i z_{i-1}) p_i \,dx + (\omega_i \cdot \nabla z_i, p_i)_H
      \\
      &\geq - |\lambda_i|_H |p_i|_{L^4(\Omega)}^2 - |\xi|_H |p_i|_{L^4(\Omega)} |z_{i-1}|_{L^4(\Omega)} - |\omega_i|_{L^\infty(\Omega)}|\nabla z_i|_{[H]}|p_i|_H
      \\
      &\geq - (C_V^{L^4})^2 |\lambda_i|_H |p_i|_V^2 - \frac{(C_V^{L^4})^2}{2}|\xi_i|_H (|p_i|_V^2 + |z_{i-1}|_V^2) - \frac{|\omega_i|_{L^\infty(\Omega)}}{2}(|p_i|_H^2 + |\nabla z_i|_{[H]^N}^2)
  \end{align}
  and
  \begin{gather}
    \dual{h_i, p_i}_V \leq \frac{1}{2}(|h_i|_{V^*}^2 + |p_i|_V^2).
  \end{gather}
  In addition, substituting $\psi$ in \eqref{P_TD2} by $z_i$, and noting that $A_i(x)$ is positive for a.e.  $x \in \Omega$, we can compute as follows:
  \begin{gather}
    \frac{1}{\tau}\int_\Omega a_i z_i (z_i - z_{i-1}) \,dx + \frac{\nu^2}{2\tau} (|\nabla z_i|_{[H]^N}^2 - |\nabla z_{i-1}|_{[H]^N}^2) - (C_V^{L^4})^2 |b_i|_H |z_i|_V^2 \label{P_TD_est5}
    \\
    - \frac{(C_V^{L^4})^2}{2}|c_i|_H(|p_{i-1}|_V^2 + |z_i|_V^2) + \frac{|\omega_i|_{L^\infty(\Omega)}}{2}(|p_i|_H^2 + |\nabla z_i|_H^2) \leq \frac{1}{2}(|k_i|_{V^*}^2 + |z_i|_V^2),
    \\
    \mbox{for every } i = 1,2,3,\dots,
  \end{gather}
  via the following calculations:
  \begin{equation}
    \frac{\nu^2}{\tau}(\nabla(z_i - z_{i-1}), \nabla z_i)_{[H]^N} \geq \frac{\nu^2}{2\tau}(|\nabla z_i|_{[H]^N}^2 - |\nabla z_{i-1}|_{[H]^N}^2),
  \end{equation}
  \begin{align}
    &\quad \int_\Omega (b_i z_i + c_i p_{i-1}) z_i \,dx + (A_i \nabla z_i + \omega_i p_i, \nabla z_i)_{[H]^N}
    \\
    &\geq - |b_i|_H |z_i|_{L^4(\Omega)}^2 - |c_i|_H |p_{i-1}|_{L^4(\Omega)} |z_i|_{L^4(\Omega)} - |\omega_i|_{L^\infty(\Omega)} |p_i|_H |\nabla z_i|_{[H]^N}^2
    \\
    &\geq - (C_V^{L^4})^2 |b_i|_H |z_i|_V^2 - \frac{(C_V^{L^4})^2}{2}|c_i|_H(|p_{i-1}|_V^2 + |z_i|_V^2) + \frac{|\omega_i|_{L^\infty(\Omega)}}{2}(|p_i|_H^2 + |\nabla z_i|_H^2),
  \end{align}
  and
  \begin{equation}
    \dual{k_i, z_i}_V \leq \frac{1}{2}(|k_i|_{V^*}^2 + |z_i|_V^2).
  \end{equation}
  By H\"older's and Young's inequalities and the continuous embedding from $V$ to $L^4(\Omega)$, the first term of \eqref{P_TD_est5} can be computed by:
  \begin{align}
    &\quad\frac{1}{\tau} \int_\Omega a_i z_i(z_i - z_{i-1}) \,dx
    \\
    &= \frac{1}{\tau}|\sqrt{a_i} z_i|_H^2 - \frac{1}{\tau} \int_\Omega a_i z_i z_{i-1} \,dx
    \\
    &\geq \frac{1}{2\tau} \bigl( |\sqrt{a_i} z_i|_H^2 - |\sqrt{a_{i-1}} z_{i-1}|_H^2 \bigr) - \frac{1}{2\tau} \int_\Omega (a_i - a_{i-1}) |z_{i-1}|^2 \,dx \label{P_TD_est6}
    \\
    &\geq \frac{1}{2\tau} \bigl( |\sqrt{a_i} z_i|_H^2 - |\sqrt{a_{i-1}} z_{i-1}|_H^2 \bigr) - \frac{1}{2} \left| \frac{a_i - a_{i-1}}{\tau} \right|_H |z_{i-1}|_{L^4(\Omega)}^2
    \\
    &\geq \frac{1}{2\tau} \bigl( |\sqrt{a_i} z_i|_H^2 - |\sqrt{a_{i-1}} z_{i-1}|_H^2 \bigr) - \frac{(C_V^{L^4})^2}{2} \left| \frac{a_i - a_{i-1}}{\tau} \right|_H |z_{i-1}|_V^2.
  \end{align}
  According to \eqref{P_TD_est6}, \eqref{P_TD_est5} can be reduced to
  \begin{align}
    &\quad \frac{1}{2\tau}\bigl( |\sqrt{a_i} z_i|_H^2 - |\sqrt{a_{i-1}} z_{i-1}|_H^2 \bigr) + \frac{\nu^2}{2\tau} (|\nabla z_i|_{[H]^N}^2 - |\nabla z_{i-1}|_{[H]^N}^2) - (C_V^{L^4})^2 |b_i|_H |z_i|_V^2 
    \\
    &\quad - \frac{(C_V^{L^4})^2}{2}|c_i|_H(|p_{i-1}|_V^2 + |z_i|_V^2) + \frac{|\omega_i|_{L^\infty(\Omega)}}{2}(|p_i|_H^2 + |\nabla z_i|_H^2) \label{P_TD_est7}
    \\
    &\quad - \frac{(C_V^{L^4})^2}{2} \left| \frac{a_i - a_{i-1}}{\tau} \right|_H |z_{i-1}|_V^2 \leq \frac{1}{2}(|k_i|_{V^*}^2 + |z_i|_V^2), \ \mbox{ for any } i = 1,2,3,\dots.
  \end{align}

  Now, from \eqref{P_TD_est4} and \eqref{P_TD_est7}, we arrive at the following inequality:
  \begin{equation}
    \frac{1}{\tau}(X_i - X_{i-1}) \leq F_i (X_i + X_{i-1}) + |h_i|_{V^*}^2 + |k_i|_{V^*}^2, \ \mbox{ for } \tau \in (0,\tau_1), \mbox{ and } i = 1,2,3,\dots, \label{P_TD_est8}
  \end{equation}
  where
  \begin{align}
    &X_i := |p_i|_H^2 + \mu^2 |\nabla p_i|_{[H]^N}^2 + |\sqrt{a_i} z_i|_H^2 + \nu^2 |\nabla z_i|_{[H]^N}^2, \mbox{ for each } i = 0,1,2,\dots,
  \end{align}
  and
  \begin{gather}
    F_i := \frac{2((C_V^{L^4})^2 + 1)}{1 \wedge \delta_a \wedge \mu^2 \wedge \nu^2} \left( \left| \frac{a_i - a_{i-1}}{\tau} \right|_H + |b_i|_H + |c_i|_H + |\lambda_i|_H + |\xi_i|_H + |\omega_i|_{L^\infty(\Omega)} + 1 \right),
    \\
    \mbox{ for every } i = 1,2,3,\dots.
  \end{gather}
  We can easily check that
  \begin{align}
    \tau \sum_{j=1}^{n_\tau} F_i \leq C_1 (T + 1), \mbox{ for any } \tau \in (0,\tau_1),
  \end{align}
  Here, we set $\tau_2 \in (0,\tau_1)$ to fulfill:
  \begin{equation}
    \tau_2 := \frac{1 \wedge \delta_a^2 \wedge \mu^4 \wedge \nu^4}{16 ((C_V^{L^4})^2 + 1)^2 \bigl( |\lambda|_\sH + |\xi|_\sH + |b|_\sH + |c|_\sH + |\partial_t a|_\sH + |\omega|_{L^\infty(Q)}+ 1\bigr)^2} \, \wedge \tau_1.
  \end{equation}
  Then, by noting the following inequalities which hold when $\tau \in (0,1)$:
  \begin{equation}
    \tau \bigl( |\lambda_i|_H + |\xi_i|_H + |b_i|_H + |c_i|_H \bigr) \leq \tau^{\frac{1}{2}} (|\lambda|_\sH + |\xi|_\sH + |b|_\sH + |c|_\sH),
  \end{equation}
  \begin{equation}
    \tau|\omega_i|_{L^\infty(\Omega)} \leq \tau |\omega|_{L^\infty(Q)} \leq \tau^{\frac{1}{2}}|\omega|_{L^\infty(Q)}
  \end{equation}
  and 
  \begin{align}
    \tau \left| \frac{a_i - a_{i-1}}{\tau} \right|_H &\leq \left| \int_{t_{i-1}}^{t_i} \partial_t a(t) \,dt \right|_H \leq \int_{t_{i-1}}^{t_i} |\partial_t a(t)|_H \,dt \leq \tau^{\frac{1}{2}} |\partial_t a|_\sH,
  \end{align}
  it holds that:
  \begin{align}
    \tau F_i &\leq \tau^{\frac{1}{2}} \cdot \frac{2((C_V^{L^4})^2 + 1)}{1 \wedge \delta_a \wedge \mu^2 \wedge \nu^2} \left( |\lambda|_\sH + |\xi|_\sH + |b|_\sH + |c|_\sH + \left| \partial_t a \right|_\sH + |\omega|_{L^\infty(Q)} + 1 \right),
    \\
    &\leq \frac{1}{2}, \quad \mbox{ for any } \tau \in (0,\tau_2). \label{P_TD_est9}
  \end{align}
  Hence, by applying the discrete version of Gronwall's lemma (cf. \cite[Section 3.1]{emmrich1999discrete}) to \eqref{P_TD_est8} and having in mind \eqref{P_TD_est9}, it is observed that:
  \begin{align}
    X_i &\leq \exp\left( \sum_{j=1}^{n_\tau} \frac{2\tau F_j}{1 - \tau F_j} \right) X_0  + \sum_{j=1}^{n_\tau} \frac{\tau (|h_j|_{V^*}^2 + |k_j|_{V^*}^2)}{1 - \tau F_j}
    \\
    &\leq \exp\left( 4 \sum_{j=1}^{n_\tau} \tau F_j \right) X_0 + 2 (|h|_{\sV^*}^2 + |k|_{\sV^*}^2)
    \\
    &\leq e^{4 C_1 (T + 1)} X_0 + 2 (|h|_{\sV^*}^2 + |k|_{\sV^*}^2), \ \mbox{ for } i = 1,2,3, \dots, n_\tau.
  \end{align}
  Thus, we obtain the estimate \eqref{P_TD_est1}, and we immediately obtain the following estimate:
  \begin{gather}
    |p_i|_V^2 + |z_i|_V^2 \leq \frac{1 + \mu^2 + \nu^2 + (C_V^{L^4})^2|a(0)|_H}{1 \wedge \delta_a \wedge \mu^2 \wedge \nu^2}e^{4 C_1 (T+1)}(|p_0|_V^2 + |z_0|_V^2 + |h|_{\sV^*}^2 + |k|_{\sV^*}^2),
    \\
    \mbox{ for every } i = 1,2,3,\dots, n_\tau.
  \end{gather}

  Next, to derive the estimate \eqref{P_TD_est2}, we let $\varphi = p_i - p_{i-1}$ and obtain that
  \begin{align}
    &\frac{1 \wedge \mu^2}{4\tau} |p_i - p_{i-1}|_V^2 + \frac{1}{2}(|\nabla p_i|_{[H]^N}^2 - |\nabla p_{i-1}|_{[H]^N}^2) 
    \\
    &\qquad- \frac{2(C_V^{L^4})^4 \tau}{1 \wedge \mu^2}(|\lambda_i|_H^2 + |\xi_i|_H^2)(|p_i|_V^2 + |z_{i-1}|_V^2) - \tau |\omega_i|_{L^\infty(\Omega)}^2 |\nabla z_i|_{[H]^N}^2
    \\
    &\qquad \leq \frac{\tau}{1 \wedge \mu^2} |h_i|_{V^*}^2, \quad \mbox{ for any } i = 1,2,3,\dots, \label{P_TD_est10}
  \end{align}
  via the following computations:
  \begin{gather}
    (\nabla p_i, \nabla(p_i - p_{i-1}))_{[H]^N} \geq \frac{1}{2}(|\nabla p_i|_{[H]^N}^2 - |\nabla p_{i-1}|_{[H]^N}^2),
  \end{gather}
  \begin{align}
    &\int_\Omega (\lambda_i p_i + \xi_i z_{i-1}) (p_i - p_{i-1}) \,dx \geq - (|\lambda_i|_H |p_i|_{L^4(\Omega)} + |\xi_i|_H |z_{i-1}|_{L^4(\Omega)})|p_i - p_{i-1}|_{L^4(\Omega)}
    \\
    &\geq - (C_V^{L^4})^2 (|\lambda_i|_H |p_i|_V + |\xi_i|_H |z_{i-1}|_V)|p_i - p_{i-1}|_V
    \\
    &\geq -\frac{1 \wedge \mu^2}{4\tau}|p_i - p_{i-1}|_V^2 - \frac{2(C_V^{L^4})^4 \tau}{1 \wedge \mu^2} (|\lambda_i|_H^2 |p_i|_V^2 + |\xi_i|_H^2 |z_{i-1}|_V^2),
  \end{align}
  \begin{gather}
    (\omega_i \cdot \nabla z_i, p_i - p_{i-1})_H \geq -\frac{1}{4\tau}|p_i - p_{i-1}|_H^2 - \tau |\omega_i|_{L^\infty(\Omega)}^2 |\nabla z_i|_{[H]^N}^2,
  \end{gather}
  and
  \begin{gather}
    \dual{h_i, p_i - p_{i-1}}_V \leq \frac{1 \wedge \mu^2}{4\tau}|p_i - p_{i-1}|_V^2 + \frac{\tau}{1 \wedge \mu^2} |h_i|_{V^*}^2.
  \end{gather}
  By summing up the both sides of \eqref{P_TD_est10} for $i = 1,2,3,\dots, n_\tau$ and having \eqref{P_TD_est1} in mind, \eqref{P_TD_est2} is confirmed with the following calculations:
  \begin{align}
    &\quad \frac{1 \wedge \mu^2}{4\tau} \sum_{i = 1}^{n_\tau} |p_i - p_{i-1}|_V^2
    \\
    &\leq \frac{1}{2}|\nabla p_0|_{[H]^N}^2 + \frac{2((C_V^{L^4})^4 + 1)}{1 \wedge \mu^2} \cdot \tau\sum_{i=1}^{n_\tau}(|\lambda_i|_H^2 + |\xi_i|_H^2 + |\omega_i|_{[L^\infty(\Omega)]^N}^2) \cdot 
    \\
    &\qquad \cdot (|p_i|_V^2 + |z_i|_V^2 + |z_{i-1}|_V^2) + \frac{\tau}{1 \wedge \mu^2}\sum_{i=1}^{n_\tau} |h_i|_{V^*}^2
  \end{align}
  \begin{align}
    &\leq \frac{1}{2}|\nabla p_0|_{[H]^N}^2 + \frac{4((C_V^{L^4})^4 + 1)(T+1)}{1 \wedge \mu^2} (|\lambda|_\sH^2 + |\xi|_\sH^2 + |\omega|_{[L^\infty(Q)]^N}^2) \cdot 
    \\
    &\quad \cdot\frac{1 + \mu^2 + \nu^2 + (C_V^{L^4})^2|a(0)|_H}{1 \wedge \delta_a \wedge \mu^2 \wedge \nu^2} e^{4 C_1 (T + 1)}(|p_0|_V^2 + |z_0|_V^2 + |h|_{\sV^*}^2 + |k|_{\sV^*}^2) + \frac{1}{1 \wedge \mu^2}|h|_{\sV^*}^2
    \\
    &\leq \frac{9((C_V^{L^4})^4 + 1)(T+1)(1 + \mu^2 + \nu^2 + (C_V^{L^4})^2|a(0)|_H)}{1 \wedge \delta_a^2 \wedge \mu^4 \wedge \nu^4} \cdot
    \\
    &\qquad\qquad \cdot e^{5C_1 (T+1)}(|p_0|_V^2 + |z_0|_V^2 + |h|_{\sV^*}^2 + |k|_{\sV^*}^2).
  \end{align}
  
  Finally, to obtain the estimate \eqref{P_TD_est3}, we multiply the both side of \eqref{P_TD2} by $z_i - z_{i-1}$ and we have
  \begin{gather}
    \frac{\delta_a \wedge \nu^2}{4\tau}|z_i - z_{i-1}|_V^2 - \frac{2(C_V^{L^4})^4 \tau}{\delta_a \wedge \nu^2}(|b_i|_H^2 |z_i|_V^2+ |c_i|_H^2 |p_{i-1}|_V^2)
    \\
    -\frac{2\tau}{\nu^2}(|A_i|_{[L^\infty(\Omega)]^{N \times N}}^2 |\nabla z_i|_H^2 + |\omega_i|_{[L^\infty(\Omega)]^N}^2 |p_i|_H^2) \leq \frac{\tau}{\delta_a \wedge \nu^2} |k_i|_{V^*}^2, \label{P_TD_est11}
    \\
    \mbox{ for any } i = 1,2,3,\dots,
  \end{gather}
  via the following computations:
  \begin{align}
    &\int_\Omega (b_i z_i + c_i p_{i-1})(z_i - z_{i-1})\,dx \geq - (|b_i|_H |z_i|_{L^4(\Omega)} + |c_i|_H |p_{i-1}|_{L^4(\Omega)}) |z_i - z_{i-1}|_{L^4(\Omega)}
    \\
    &\quad \geq -(C_V^{L^4})^2 (|b_i|_H |z_i|_V + |c_i|_H |p_{i-1}|_V) |z_i - z_{i-1}|_V
    \\
    &\quad \geq -\frac{\delta_a \wedge \nu^2}{4\tau}|z_i - z_{i-1}|_V^2 - \frac{2(C_V^{L^4})^4 \tau}{\delta_a \wedge \nu^2}(|b_i|_H^2 |z_i|_V^2 + |c_i|_H^2 |p_{i-1}|_V^2),
  \end{align}
  \begin{align}
    &\quad(A_i \nabla z_i + p_i \omega_i, z_i - z_{i-1})_{[H]^N} 
    \\
    &\geq -(|A_i|_{[L^\infty(\Omega)]^{N \times N}}|\nabla z_i|_H + |\omega_i|_{[L^\infty(\Omega)]^N} |p_i|_H)|\nabla (z_i - z_{i-1})|_{[H]^N}
    \\
    &\geq -\frac{\nu^2}{4\tau} |\nabla (z_i - z_{i-1})|_{[H]^N}^2 - \frac{2\tau}{\nu^2} (|A_i|_{[L^\infty(\Omega)]^{N \times N}}^2 |\nabla z_i|_{[H]^N}^2 + |\omega_i|_{[L^\infty(\Omega)]^N}^2 |p_i|_H^2),
  \end{align}
  and
  \begin{gather}
    \dual{k_i, z_i - z_{i-1}}_V \leq \frac{\delta_a \wedge \nu^2}{4\tau}|z_i - z_{i-1}|_V^2 + \frac{\tau}{\delta_a \wedge \nu^2}|k_i|_{V^*}^2.
  \end{gather}
  From \eqref{P_TD_est1} and \eqref{P_TD_est11}, we can deduce that
  \begin{align}
    &\quad \frac{\delta_a \wedge \nu^2}{4\tau}\sum_{i=1}^{n_\tau}|z_i - z_{i-1}|_V^2
    \\
    &\leq \frac{4((C_V^{L^4})^4 + 1)}{\delta_a \wedge \nu^2} \cdot \tau \sum_{i=1}^{n_\tau}(|b_i|_H^2 + |c_i|_H^2 + |\omega_i|_{[L^\infty(\Omega)]^N}^2 + |A_i|_{[L^\infty(\Omega)]^{N \times N}}^2) \cdot
    \\
    &\qquad \cdot (|z_i|_V^2 + |p_i|_V^2 + |z_{i-1}|_V^2) + \frac{\tau}{\delta_a \wedge \nu^2}\sum_{i=1}^{n_\tau} |k_i|_{V^*}^2
  \end{align}
  \begin{align}
    &\leq \frac{16((C_V^{L^4})^4 + 1)(T + 1)}{\delta_a \wedge \nu^2} (|b|_\sH^2 + |c|_\sH^2 + |\omega|_{[L^\infty(Q)]^N}^2 + |A|_{[L^\infty(Q)]^{N \times N}}^2) \cdot
    \\
    &\quad \cdot \frac{1 + \mu^2 + \nu^2 + (C_V^{L^4})^2|a(0)|_H}{1 \wedge \delta_a \wedge \mu^2 \wedge \nu^2} e^{4C_1(T+1)} (|p_0|_V^2 + |z_0|_V^2 + |h|_\sH^2 + |k|_\sH^2) + \frac{1}{\delta_a \wedge \nu^2}|k|_{\sV^*}^2
    \\
    &\leq \frac{17((C_V^{L^4})^4 + 1)(T + 1)(1 + \mu^2 + \nu^2 + (C_V^{L^4})^2|a(0)|_H)}{1 \wedge \delta_a^2 \wedge \mu^4 \wedge \nu^4} \cdot
    \\
    &\quad \cdot e^{5 C_1 (T+1)} (|A|_{[L^\infty(Q)]^{N \times N}}^2 + 1) (|p_0|_V^2 + |z_0|_V^2 + |h|_{\sV^*}^2 + |k|_{\sV^*}^2).
  \end{align}
  Thus, the estimates \eqref{P_TD_est2} and \eqref{P_TD_est3} are obtained with the constant
  \begin{equation}
    C_2 = \frac{17((C_V^{L^4})^4 + 1)(T + 1)(1 + \mu^2 + \nu^2 + (C_V^{L^4})^2|a(0)|_H)}{1 \wedge \delta_a^2 \wedge \mu^4 \wedge \nu^4}.
  \end{equation}
\end{proof}

\begin{proof}[Proof of Key-Lemma \ref{lem:Psol}]
  Owing to Key-Lemma \ref{lem:P_TD} and \ref{lem:P_est}, we can derive the following 
  \\
  boundedness:
  \begin{itemize}
    \item $\{ [p]_\tau \}_{\tau \in (0,\tau_2)}, \{ [z]_\tau \}_{\tau \in (0,\tau_2)}$ are bounded in $W^{1,2}(0,T;V)$,
    \item $\{ [\overline{p}]_\tau \}_{\tau \in (0,\tau_2)}, \{ [\underline{p}]_\tau \}_{\tau \in (0,\tau_2)}, \{ [\overline{z}]_\tau \}_{\tau \in (0,\tau_2)}, \{ [\underline{z}]_\tau \}_{\tau \in (0,\tau_2)}$ are bounded in $L^\infty(0,T;V)$.
  \end{itemize}
  Therefore, by applying the compactness theory of Aubin's type (cf. \cite[Corollary 4]{MR0916688}), there exists a subsequence $\{ \tau_n \}_{n=1}^\infty \subset \{ \tau \}$ and there exists a pair of functions $[p,z]$ satisfying the following convergence:
  \begin{gather}
    p_n := [p]_{\tau_n} \to p, \, z_n := [z]_{\tau_n} \to z \mbox{ in } C([0,T];H) \mbox{ and weakly in } W^{1,2}(0,T;V)
    \\
    \mbox{ as } n \to \infty, \label{P_sol_conv1}
  \end{gather}
  and in particular,
  \begin{equation}
    \lim_{n \to \infty} [p_n(0), z_n(0)] = \lim_{n \to \infty} [p_0, z_0] = [p_0, z_0] \mbox{ in } [H]^2.
  \end{equation}
  Moreover, noting the following:
  \begin{align}
    &\bigl( |\overline{p}_{\tau_n} - p_n|_V \wedge |\underline{p}_{\tau_n} - p_n|_V \wedge |\overline{z}_{\tau_n} - z_n|_V \wedge |\underline{z}_{\tau_n} - z_n|_V \bigr)(t)
    \\
    &\quad \leq \int_{(t_{i-1}, t_i) \cap (0,T)} \bigl( |\partial_t p_\tau(t)|_V \wedge |\partial_t z_\tau(t)|_V \bigr)\,dt \leq \tau^\frac{1}{2} \bigl( |\partial_t p_\tau|_\sV \wedge |\partial_t z_\tau|_\sV \bigr),
    \\
    &\quad \mbox{ for } t \in (t_{i-1}, t_i) \cap (0,T), \ i = 1,2,3,\dots, n_\tau, \mbox{ and } \tau \in (0,\tau_2),
  \end{align}
  we can derive additional convergences as follows:
  \begin{gather}
    \overline{p}_n := [\overline{p}]_{\tau_n} \to p, \, \underline{p}_n := [\underline{p}]_{\tau_n} \to p, \, \overline{z}_n := [\overline{z}]_{\tau_n} \to z, \, \underline{z}_n := [\underline{z}]_{\tau_n} \mbox{ in } L^\infty(0,T;H)
    \\
    \mbox{ and weakly-$*$ in } L^\infty(0,T;V), \mbox{ as } n \to \infty. \label{P_sol_conv2}
  \end{gather}
  In particular, it holds that
  \begin{gather}
    p_n(t) \to p(t), \ \overline{p}_n(t) \to p(t), \ \underline{p}_n(t) \to p(t), \ z_n(t) \to z(t), \ \overline{z}_n(t) \to z(t), \ \underline{z}_n(t) \to z(t),
    \\
    \mbox{ in } H \mbox{ and weakly in } V, \mbox{ as } n \to \infty, \mbox{ for any } t \in [0,T].
  \end{gather}

  Now, we let:
  \begin{equation}
    \left\{ \begin{aligned}
      &\overline{a}_n := [\overline{a}]_{\tau_n}, \, \overline{b}_n := [\overline{b}]_{\tau_n}, \, \overline{c}_n := [\overline{c}]_{\tau_n}, \, \overline{\lambda}_n := [\overline{\lambda}]_{\tau_n}, \, \overline{\xi}_n := [\overline{\xi}]_{\tau_n}, \, 
      \\
      &\overline{\omega}_n := [\overline{\omega}]_{\tau_n}, \overline{A}_n := [\overline{A}]_{\tau_n}, \, \overline{h}_n := [\overline{h}]_{\tau_n}, \, \overline{k}_n := [\overline{k}]_{\tau_n}.
    \end{aligned} \right.
  \end{equation}
  Then, the sequences solve the following variational system for any open interval $I \subset (0,T)$ and any $n = 1,2,3,\dots$:
  \begin{gather}
    \int_I (\partial_t p_n(t) + \overline{\omega}_n(t) \cdot \nabla \overline{z}_n(t), \varphi)_H \,dt + \int_I \int_\Omega (\overline{\lambda}_n \overline{p}_n + \overline{\xi}_n \underline{z}_n)(t) \varphi \,dxdt
    \\
    + \int_I (\nabla (\overline{p}_n + \mu^2 \partial_t p_n)(t), \nabla \varphi)_{[H]^N}\,dt = \int_I \langle [\overline{h}]_n(t), \varphi \rangle_V, \mbox{ for any } \varphi \in V,
  \end{gather}
  and
  \begin{gather}
    \int_I (\overline{a}_n \partial_t z_n + \overline{b}_n \overline{z}_n + \overline{c}_n \underline{p}_n)(t) \psi \,dxdt + \int_I (\overline{A}_n(t) \nabla \overline{z}_n(t) + \overline{p}_n(t)\overline{\omega}_n(t), \nabla \psi)_{[H]^N} \,dt
    \\
    + \nu^2 \int_I (\nabla \partial_t z_n(t), \nabla \psi)_{[H]^N}^2 \,dt = \int_I \langle \overline{k}_n(t), \psi \rangle_V \,dt, \ \mbox{ for any } \psi \in V.
  \end{gather}
  By noting the continuous embedding from $V$ to $L^4(\Omega)$ as in Remark \ref{emb} and the following convergences as $n \to \infty$:
  \begin{gather}
    \left\{ \begin{aligned}
      &\bullet \overline{\omega}_n \varphi \to \omega \varphi, \ \overline{\omega}_n \psi \to \omega \psi, \ \overline{A}_n \nabla \psi \to A \nabla \psi \mbox{ in } [\sH]^N,
      \\
      &\bullet \overline{\lambda}_n \varphi \to \lambda \varphi, \ \overline{\xi}_n \varphi \to \xi \varphi, \ \overline{a}_n \psi \to a \psi, \ \overline{b}_n \psi \to b \psi, \ \overline{c}_n \psi \to c \psi \mbox{ in } L^2(0,T;L^\frac{4}{3}(\Omega)),
    \end{aligned} \right.
    \\
    \mbox{ for any } \varphi \in V \mbox{ and } \psi \in V,
  \end{gather}
  we have
  \begin{gather}
    \int_I (\partial_t p(t) + \omega(t) \cdot \nabla z(t), \varphi)_H \,dt + \int_I \int_\Omega (\lambda p + \xi z)(t) \varphi \,dxdt
    \\
    + \int_I (\nabla (p + \mu^2 \partial_t p)(t), \nabla \varphi)_{[H]^N}\,dt = \int_I \langle h(t), \varphi \rangle_V, \mbox{ for any } \varphi \in V,
  \end{gather}
  and
  \begin{gather}
    \int_I (a \partial_t z + b z + c p)(t) \psi \,dxdt + \int_I (A(t) \nabla z(t) + p(t) \omega(t), \nabla \psi)_{[H]^N} \,dt
    \\
    + \nu^2 \int_I (\nabla \partial_t z(t), \nabla \psi)_{[H]^N}^2 \,dt = \int_I \langle k(t), \psi \rangle_V \,dt, \ \mbox{ for any } \psi \in V.
  \end{gather}
  Since the open interval $I \subset (0,T)$ is arbitrary, the limiting pair $[p,z]$ solves the variational system (P), and thus, we complete the proof of Key-Lemma \ref{lem:Psol}.
\end{proof}

\begin{keylem}[Uniqueness of solution to (P) and continuous dependence] \label{lem:P_uni}
  Let \linebreak $[a^i, b^i, c^i, \lambda^i, \xi^i, \omega^i, A^i] \in \sS$, $[p_0^i, z_0^i] \in [V]^2$, $[h^i, k^i] \in [\sV^*]^2$, and let $[p^i, z^i] \in [W^{1,2}(0,T;V)]^2$ be the solution to (P) corresponding to the septuplet $[a^i, b^i, c^i, \lambda^i, \xi^i, \omega^i, A^i]$, the initial pair $[p_0^i, z_0^i]$ and the forcing $[h^i,k^i]$ for $i=1,2$. In addition, we set:
  \begin{gather}
    J(t) := |(p^1 - p^2)(t)|_H^2 + \mu^2 |\nabla (p^1 - p^2)(t)|_{[H]^N}^2 + |\sqrt{a^1(t)}(z^1 - z^2)(t)|_H^2 
    \\
    + \nu^2 |\nabla(z^1 - z^2)(t)|_{[H]^N}^2, \mbox{ for any } t \in [0,T], \label{P_uni00}
  \end{gather}
  and
  \begin{align}
    R_0(t) &:= \biggl( \left( \frac{12((C_V^{L^4})^2 + 1)}{1 \wedge \delta_a \wedge \mu^2 \wedge \nu^2} \right) \cdot 
    \\
    &\quad \cdot \bigl(|\lambda^1(t)|_H + |\xi^1(t)|_H + |\partial_t a^1(t)|_H + |b^1(t)|_H + |c^1(t)|_H + |\omega^1(t)|_{[L^\infty(\Omega)]^N} + 1 \bigr) \biggr).
    \\
    R_1(t) &:= |p^2(t)|_V^2 (|(\lambda^1 - \lambda^2)(t)|_H^2 + |(c^1 - c^2)(t)|_H^2) \label{P_uni01}
    \\
    &\quad + |z^2(t)|_V^2 (|(\xi^1 - \xi^2)(t)|_H^2 + |(a^1 - a^2)(t)|_H^2 + |(b^1 - b^2)(t)|_H^2)
    \\
    &\quad + |\nabla z^2(t) \cdot (\omega^1 - \omega^2)(t)|_H^2 + |(A^1 - A^2)(t) \nabla z^2(t)|_{[H]^N}^2 + |p^2(t) (\omega^1 - \omega^2)(t)|_{[H]^N}^2,
    \\
    &\qquad\qquad \mbox{ for a.e. } t \in (0,T).
  \end{align} 
  Then, the following estimate holds:
  \begin{gather}
    \frac{d}{dt} J(t) \leq R_0(t) J(t) + |(h^1 - h^2)(t)|_{V^*}^2 + |(k^1 - k^2)(t)|_{V^*}^2 + ((C_V^{L^4})^2 + 1) R_1(t),
    \\
    \mbox{ for a.e. } t \in (0,T). \label{P_uni99}
  \end{gather}
  In particular, the solution to (P) is unique.
\end{keylem}

\begin{proof}
  By letting $\varphi = (p^1 - p^2)(t)$ in the variational identity \eqref{P_var1}, taking the difference of the both side, and using the continuous embedding from $V$ to $L^4(\Omega)$, it is computed that:
  \begin{align}
    &\quad \frac{1}{2}\frac{d}{dt}\bigl( |(p^1 - p^2)(t)|_H^2 + \mu^2 |\nabla (p^1 - p^2)(t)|_{[H]^N}^2 \bigr) \label{P_uni_1}
    \\
    &\leq (C_V^{L^4})^2 (|\lambda^1(t)|_H + 1) |(p^1 - p^2)(t)|_V^2 + \frac{(C_V^{L^4})^2}{2}|p^2(t)|_V^2 |(\lambda^1 - \lambda^2)(t)|_H^2 
    \\
    &\quad + \frac{(C_V^{L^4})^2}{2}(|\xi^1(t)|_H + 1) (|(p^1 - p^2)(t)|_V^2 + |(z^1 - z^2)(t)|_V^2) 
    \\
    &\quad + \frac{(C_V^{L^4})^2}{2}|z^2(t)|_V^2 |(\xi^1 - \xi^2)(t)|_H^2 
    \\
    &\quad + \frac{1}{2}(|\omega^1(t)|_{[L^\infty(\Omega)]^N} + 1) \bigl( |\nabla (z^1 - z^2)(t)|_{[H]^N}^2 + |(p^1 - p^2)(t)|_H^2 \bigr)
    \\
    &\quad + |z^2(t) \cdot \nabla (\omega^1 - \omega^2)(t)|_H^2+ \frac{1}{2}(|(p^1 - p^2)(t)|_V^2 + |(h^1 - h^2)(t)|_{V^*}^2), \mbox{ for a.e. } t \in (0,T).
  \end{align}
  On the other hand, by letting $\psi = (z^1 - z^2)(t)$ in \eqref{P_var2}, taking the difference between two variational equalities, using the continuous embedding from $V$ to $L^4(\Omega)$ and invoking that $A(t,x)$ is positive for a.e. $(t,x) \in Q$, we can calculate that
  \begin{align}
    &\quad \int_\Omega (a^1 \partial_t z^1 - a^2 \partial_t z^2)(t) (z^1 - z^2)(t)\,dx + \frac{\nu^2}{2} \frac{d}{dt} |\nabla (z^1 - z^2)(t)|_{[H]^N}^2 \label{P_uni_2}
    \\
    &\leq (C_V^{L^4})^2(|b^1(t)|_H + 1) |(z^1 - z^2)(t)|_V^2 + \frac{(C_V^{L^4})^2}{2}|z^2(t)|_V^2 |(b^1 - b^2)(t)|_H^2 
    \\
    &\quad + \frac{(C_V^{L^4})^2}{2}(|c^1(t)|_H + 1) (|(p^1 - p^2)(t)|_V^2 + |(z^1 - z^2)(t)|_V^2)
    \\
    &\quad + \frac{(C_V^{L^4})^2}{2}|p^2(t)|_V^2 |(c^1 - c^2)(t)|_H^2 + \frac{1}{2}|\nabla (z^1 - z^2)(t)|_{[H]^N}^2 + \frac{1}{2}|(A^1 - A^2)(t) \nabla z^2(t)|_{[H]^N}^2
    \\
    &\quad + \frac{(|\omega^1(t)|_{[L^\infty(\Omega)]^N} + 1)}{2} \bigl( |\nabla (z^1 - z^2)(t)|_{[H]^N}^2 + |(p^1 - p^2)(t)|_H^2 \bigr)
    \\
    &\quad + \frac{1}{2}|p^2(t)(\omega^1 - \omega^2)(t)|_{[H]^N}^2 + \frac{1}{2}|(z^1 - z^2)(t)|_V^2 + \frac{1}{2}|(k^1 - k^2)(t)|_{V^*}^2, \mbox{ for a.e. } t \in (0,T).
  \end{align} \noeqref{P_uni_2}
  The first term can be computed by
  \begin{align}
    &\int_\Omega (a^1 \partial_t z^1 - a^2 \partial_t z^2)(t) (z^1 - z^2)(t)\,dx \label{P_uni_3}
    \\
    &\quad = \frac{1}{2} \frac{d}{dt} |\sqrt{a^1(t)} (z^1 - z^2)(t)|_H^2 - \frac{1}{2} \int_\Omega \partial_t a(t)|(z^1 - z^2)(t)|^2 \,dx 
    \\
    &\qquad + \int_\Omega (a^1 - a^2)(t) \partial_t z^2(t) (z^1 - z^2)(t) \,dx
    \\
    &\quad \geq \frac{1}{2} \frac{d}{dt} |\sqrt{a^1(t)} (z^1 - z^2)(t)|_H^2 - \frac{(C_V^{L^4})^2}{2} |\partial_t a^1(t)|_H^2 |(z^1 - z^2)(t)|_V^2
    \\
    &\qquad - \frac{(C_V^{L^4})^2}{2}|(z^1 - z^2)(t)|_V^2 - \frac{1}{2}|\partial_t z^2|_V^2 |(a^1 - a^2)(t)|_H^2.
  \end{align}
  By integrating \eqref{P_uni_1}--\eqref{P_uni_3}, we arrive at the Gronwall type inequality \eqref{P_uni99}.
\end{proof}

According to Key-Lemma \ref{lem:P_est}, \ref{lem:P_uni}, and the convergences \eqref{P_sol_conv1} and \eqref{P_sol_conv2}, we immediately derive the following Corollary.

\begin{cor}\label{cor:P_est}
  Let $[p,z]$ be the solution to the system (P) corresponding to a septuplet $[a,b,c,\lambda,\xi,\omega,A] \in \sS$, initial data $[p_0,z_0] \in [V]^2$ and forcing pair $[h,k] \in [\sV^*]^2$. Then, the solution $[p,z]$ fulfills the following estimates:
    \begin{align}
      |p|_{C([0,T];V)}^2 + |z|_{C([0,T];V)}^2 &\leq \frac{1 + \mu^2 + \nu^2 + (C_V^{L^4})^2|a(0)|_H}{1 \wedge \delta_a \wedge \mu^2 \wedge \nu^2}e^{4 C_1 (T+1)}\cdot
      \\
      &\qquad \cdot (|p_0|_V^2 + |z_0|_V^2 + |h|_{\sV^*}^2 + |k|_{\sV^*}^2), \label{P_est_97}
    \end{align}
    \begin{gather}
      \frac{1 \wedge \mu^2}{4}|\partial_t p|_{\sV}^2 \leq C_2 e^{5C_1 (T + 1)} (|p_0|_V^2 + |z_0|_V^2 + |h|_{\sV^*}^2 + |k|_{\sV^*}^2), \label{P_TD_est98}
    \end{gather}
    and
    \begin{align}
      &\frac{\delta_a \wedge \nu^2}{4}|\partial_t z|_{\sV}^2 \leq C_2 e^{5 C_1 (T + 1)}(|A|_{[L^\infty(Q)]^{N \times N}}^2 + 1) (|p_0|_V^2 + |z_0|_V^2 + |h|_{\sV^*}^2 + |k|_{\sV^*}^2), \label{P_TD_est99}
    \end{align}
    where the constants $C_1, C_2$ are as in Key-Lemma \ref{lem:P_est}.
\end{cor}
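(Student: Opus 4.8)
The plan is to derive all three estimates by passing to the limit, along the subsequence $\{\tau_n\}_{n=1}^\infty$ used in the proof of Key-Lemma~\ref{lem:Psol}, in the discrete bounds established in Key-Lemma~\ref{lem:P_est}; that the resulting limit $[p,z]$ is \emph{the} solution to (P), and not merely one accumulation point of the time-discretizations, is guaranteed by the uniqueness assertion of Key-Lemma~\ref{lem:P_uni}, so the estimates obtained for it are in fact estimates for the solution.

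First I would treat \eqref{P_est_97}. The proof of Key-Lemma~\ref{lem:P_est}(i) yields, for every $i=1,\dots,n_\tau$, a bound on $|p_i|_V^2+|z_i|_V^2$ by the right-hand side of \eqref{P_est_97}; since $[\overline{p}]_\tau$, $[\overline{z}]_\tau$ take the constant values $p_i$, $z_i$ on $(t_{i-1},t_i]$, this amounts to a bound on $|[\overline{p}]_\tau(t)|_V^2+|[\overline{z}]_\tau(t)|_V^2$, uniform in $t\in(0,T]$ and in $\tau\in(0,\tau_2)$. By the pointwise-in-$t$ statement accompanying \eqref{P_sol_conv2}, $\overline{p}_n(t)\to p(t)$ and $\overline{z}_n(t)\to z(t)$ weakly in $V$ for \emph{every} $t\in[0,T]$, so weak lower semicontinuity of $|\cdot|_V$ gives $|p(t)|_V^2+|z(t)|_V^2\le\liminf_n\bigl(|\overline{p}_n(t)|_V^2+|\overline{z}_n(t)|_V^2\bigr)$, which is again bounded by the right-hand side of \eqref{P_est_97}. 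Taking the supremum over $t\in[0,T]$ --- legitimate because $p,z\in W^{1,2}(0,T;V)\subset C([0,T];V)$ by Key-Lemma~\ref{lem:Psol} --- yields \eqref{P_est_97}.

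Next I would obtain \eqref{P_TD_est98} and \eqref{P_TD_est99} via the linear interpolations. On each $(t_{i-1},t_i)$ one has $\partial_t[p]_\tau\equiv\tau^{-1}(p_i-p_{i-1})$ in $V$, and since $n_\tau\tau\ge T$ this gives $|\partial_t[p]_\tau|_\sV^2\le\tau^{-1}\sum_{i=1}^{n_\tau}|p_i-p_{i-1}|_V^2$, and likewise for $z$. Combining with \eqref{P_TD_est2} and \eqref{P_TD_est3}, the quantities $\tfrac{1\wedge\mu^2}{4}|\partial_t[p]_\tau|_\sV^2$ and $\tfrac{\delta_a\wedge\nu^2}{4}|\partial_t[z]_\tau|_\sV^2$ are bounded by the right-hand sides of \eqref{P_TD_est98} and \eqref{P_TD_est99}, uniformly in $\tau\in(0,\tau_2)$. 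By \eqref{P_sol_conv1}, $[p]_{\tau_n}\to p$ and $[z]_{\tau_n}\to z$ weakly in $W^{1,2}(0,T;V)$, hence $\partial_t[p]_{\tau_n}\to\partial_t p$ and $\partial_t[z]_{\tau_n}\to\partial_t z$ weakly in $\sV$; weak lower semicontinuity of $|\cdot|_\sV$ then transfers the bounds to $\partial_t p$ and $\partial_t z$, giving \eqref{P_TD_est98} and \eqref{P_TD_est99}.

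There is no serious obstacle here --- the whole argument is a routine weak-lower-semicontinuity passage to the limit. The only points that demand care are the bookkeeping ones: using the forward interpolant for the $C([0,T];V)$-bound but the linear interpolant for the $\partial_t$-bounds; checking $n_\tau\tau\ge T$ so that the index sums appearing in Key-Lemma~\ref{lem:P_est} indeed control the full-time integrals; and invoking the genuinely pointwise-in-$t$ weak convergence in $V$, rather than merely weak convergence in $W^{1,2}(0,T;V)$, in order to land the first estimate in $C([0,T];V)$.
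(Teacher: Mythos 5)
Your argument is correct and is precisely the route the paper intends: the paper states that Corollary~\ref{cor:P_est} follows ``immediately'' from Key-Lemmas~\ref{lem:P_est} and \ref{lem:P_uni} together with the convergences \eqref{P_sol_conv1}--\eqref{P_sol_conv2}, and you have simply filled in the weak-lower-semicontinuity passage to the limit, including the two points that actually need care (using the pointwise-in-$t$ weak $V$-convergence of the forward interpolants for \eqref{P_est_97}, and the identification $|\partial_t[p]_\tau|_\sV^2\le\tau^{-1}\sum_i|p_i-p_{i-1}|_V^2$ for \eqref{P_TD_est98}--\eqref{P_TD_est99}, with uniqueness guaranteeing the limit is the solution). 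No gaps.
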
 

Now, we prepare additional notations. Owing to Key-Lemma \ref{lem:Psol}, we can define an operator $\sP = \sP(a,b,c,\lambda,\xi,\omega,A):[V]^2 \times [\sV^*]^2 \longrightarrow \sZ$ for any septuplet $[a,b,c,\lambda,\xi,\omega,A] \in \sS$, as follows:
\begin{align}
  &\sP = \sP(a,b,c,\lambda,\xi,\omega,A):([p_0, z_0],[h,k]) \in [V]^2 \times [\sV^*]^2 
  \\
  &\qquad \mapsto \sP([p_0, z_0],[h,k]) := [p,z] \in \sZ\mbox{: the unique solution to (P)}. \label{sol_op_P}
\end{align}

\begin{rem} \label{rem:p*}
  Set $\sT$ as an isomorphism defined as:
  \begin{equation}
    \sT \varphi(t) = \varphi(T-t) \mbox{ in } H, \mbox{ for a.e. } t \in (0,T).
  \end{equation}
  For any $\varepsilon \in (0,1)$, let $[u_\varepsilon^*, v_\varepsilon^*]$ be the optimal control of the problem (OCP)$_\varepsilon$, and let $[\eta_\varepsilon,\theta_\varepsilon]$ be the solution to the state system (S)$_\varepsilon$ corresponding to the initial data $[\eta_0, \theta_0]$ and the forcings $[u_\varepsilon^*, v_\varepsilon^*]$. Also, let us set $\sP_\varepsilon^\circ \in \sL([\sH]^2)$ as the restriction $\sP|_{[0,0] \times [\sH]^2}$ of the bounded linear operator $\sP = \sP(a,b,c,\lambda,\omega,A)$ in the case:
  \begin{equation}
    \left\{ \begin{aligned}
      &a = \sT[-\alpha_0(\eta_\varepsilon)],
      \\
      &b = \sT[-\alpha_0'(\eta_\varepsilon)\partial_t \eta_\varepsilon],
      \\
      &c = 0,
      \\
      &\lambda = \sT[g'(\eta_\varepsilon) + \alpha''(\eta_\varepsilon) \gamma_\varepsilon(\nabla \theta_\varepsilon)],
      \\
      &\xi = \sT[\alpha_0'(\eta_\varepsilon) \partial_t \theta_\varepsilon],
      \\
      &\omega = \sT[\alpha'(\eta_\varepsilon) \nabla \gamma_\varepsilon(\nabla \theta_\varepsilon)],
      \\
      &A = \sT[\alpha(\eta_\varepsilon) \nabla^2 \gamma_\varepsilon(\nabla \theta_\varepsilon)],
    \end{aligned} \right. \mbox{ in } [\sH]^7.
  \end{equation}
  Based on these settings, we set a bounded linear operator $\sP_\varepsilon^*$ as:
  \begin{equation}
    \sP_\varepsilon^* := \sT \circ \sP_\varepsilon^\circ \circ \sT, \mbox{ in } \sL([\sH]^2).
  \end{equation}
  Then, the solution $[p_\varepsilon^*,z_\varepsilon^*]$ to the variational system \eqref{adj_1}--\eqref{adj_3} can be represented by:
  \begin{equation}
    [p_\varepsilon^*, z_\varepsilon^*] = \sP_\varepsilon^*(M_\eta(\eta_\varepsilon - \eta_{\rm ad}),M_\theta(\theta_\varepsilon - \theta_{\rm ad})) \mbox{ in } [\sH]^2.
  \end{equation}
\end{rem}
\noeqref{adj_2}

In the light of Key-Lemma \ref{lem:Psol}, \ref{lem:P_uni} and Corollary \ref{cor:P_est}, we can obtain the following results for the continuous dependence of solution with respect to septuplet, initial data, and forcing pair.

\begin{cor}\label{cor:CD}
  We assume that $[a, b, c, \lambda, \xi, \omega, A] \in \sS$ and $\{ [a_n, b_n, c_n, \lambda_n, \xi_n, \omega_n, A_n] \}_{n=1}^\infty$ $\subset \sS$ fulfill the following convergences:
  \begin{gather}
    \left\{ \begin{aligned}
      &\bullet \ [b_n,c_n,\lambda_n,\xi_n] \to [b,c,\lambda,\xi] \mbox{ in } [\sH]^4,
      \\
      &\bullet \ a_n \to a \mbox{ in } \sH \mbox{ and weakly in } W^{1,2}(0,T;H),
      \\
      &\bullet \ \omega_n \to \omega \mbox{ weakly-$*$ in } [L^\infty(Q)]^N, \mbox{ and in the pointwise sense, a.e. in } Q,
      \\
      &\bullet \ A_n \to A \mbox{ weakly-$*$ in } [L^\infty(Q)]^{N \times N}, \mbox{ and in the pointwise sense, a.e. in } Q,
    \end{aligned} \right. \label{CD_01}
    \\
    \mbox{ as } n \to \infty.
  \end{gather} 
  Also, we let $[p_0,z_0] \in [V]^2$, $\{ [p_{0,n}, z_{0,n}] \}_{n=1}^\infty \subset [V]^2$, $[h,k] \in [\sV^*]^2$, and $\{ [h_n, k_n] \}_{n=1}^\infty \subset [\sV^*]^2$. Besides, we set $[p_n, z_n] := \sP(a_n, b_n, c_n, \lambda_n, \xi_n, \omega_n, A_n)([p_{0,n}, z_{0,n}], [h_n,k_n])$ for $n = 1,2,3,\dots$, and $[p,z] := \sP(a, b, c, \lambda, \xi, \omega, A)([p_0,z_0],[h,k])$. Then, if the sequences 
  \\
  $\{ [p_{0,n},z_{0,n}] \}_{n=1}^\infty$ and $\{ [h_n,k_n] \}_{n=1}^\infty$ satisfy the following convergences:
  \begin{equation}
    [[p_{0,n},z_{0,n}], [h_n,k_n]] \to [[p_0,z_0], [h,k]] \mbox{ weakly in } [V]^2 \times [\sV^*]^2 \mbox{ as } n \to \infty, \label{CD_02_0}
  \end{equation}
  the following convergences hold:
  \begin{equation}
    p_n \to p, \, z_n \to z \mbox{ in } C([0,T];H) \mbox{ and weakly in } W^{1,2}(0,T;V) \mbox{ as } n \to \infty. \label{CD_02}
  \end{equation}
  Moreover, if we suppose:
  \begin{equation}
    [[p_{0,n},z_{0,n}], [h_n,k_n]] \to [[p_0,z_0], [h,k]] \mbox{ in } [V]^2 \times [\sV^*]^2 \mbox{ as } n \to \infty, \label{CD_03_0}
  \end{equation}
  then, it holds that:
  \begin{equation}
    p_n \to p, \, z_n \to z \mbox{ in } C([0,T];V) \mbox{ as } n \to \infty. \label{CD_03}
  \end{equation}
\end{cor}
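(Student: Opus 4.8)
The plan is to deduce both convergences from three ingredients already available: the a priori bounds of Corollary~\ref{cor:P_est}, a compactness argument modelled on the proof of Key-Lemma~\ref{lem:Psol}, and the Gronwall-type differential inequality \eqref{P_uni99} of Key-Lemma~\ref{lem:P_uni}. First I would record that \eqref{CD_01} makes the quantities $|\partial_t a_n|_\sH$, $|b_n|_\sH$, $|c_n|_\sH$, $|\lambda_n|_\sH$, $|\xi_n|_\sH$, $|\omega_n|_{[L^\infty(Q)]^N}$, $|A_n|_{[L^\infty(Q)]^{N\times N}}$ bounded uniformly in $n$, that $\inf a_n(\R)\ge\delta_a$ for all $n$ since $a_n\in\sS$, and that $\sup_n|a_n(0)|_H<\infty$ because $a_n\to a$ weakly in $W^{1,2}(0,T;H)\hookrightarrow C([0,T];H)$. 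Hence the constants $C_1,C_2$ of Corollary~\ref{cor:P_est} remain bounded along the sequence, and since $\{[p_{0,n},z_{0,n}]\}_n$ and $\{[h_n,k_n]\}_n$ are bounded in $[V]^2$ and $[\sV^*]^2$ (being weakly convergent under either hypothesis), the estimates \eqref{P_est_97}, \eqref{P_TD_est98}, \eqref{P_TD_est99} give that $\{[p_n,z_n]\}_n$ is bounded in $[C([0,T];V)\cap W^{1,2}(0,T;V)]^2$; in particular $[p,z]$ itself lies in this class.

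For the weak assertion \eqref{CD_02} under \eqref{CD_02_0}, I would invoke the Aubin-type compactness theorem (cf.\ \cite{MR0916688}) to extract a subsequence with $[p_{n_j},z_{n_j}]\to[\tilde p,\tilde z]$ in $C([0,T];H)$ and weakly in $[W^{1,2}(0,T;V)]^2$, so that $[\tilde p(0),\tilde z(0)]=[p_0,z_0]$. The passage to the limit in \eqref{P_var1}--\eqref{P_var2} is then performed exactly as in the proof of Key-Lemma~\ref{lem:Psol}: fixing $\varphi,\psi\in V\hookrightarrow L^4(\Omega)$, the products $\omega_{n_j}\varphi$, $\omega_{n_j}\psi$, $A_{n_j}\nabla\psi$ converge strongly in $[\sH]^N$, and $a_{n_j}\psi$, $b_{n_j}\psi$, $c_{n_j}\psi$, $\lambda_{n_j}\varphi$, $\xi_{n_j}\varphi$ converge strongly in $L^2(0,T;L^{4/3}(\Omega))$, by dominated convergence using \eqref{CD_01} and the $L^\infty$-bounds; these pair against the weakly convergent $\nabla z_{n_j}$, $\partial_t z_{n_j}$, $\partial_t p_{n_j}$ in the manner of Remark~\ref{emb}. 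Thus $[\tilde p,\tilde z]$ solves (P) for the limit data, hence $[\tilde p,\tilde z]=[p,z]$ by the uniqueness in Key-Lemma~\ref{lem:P_uni}; since the limit does not depend on the subsequence, the whole sequence converges, which is \eqref{CD_02}.

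For the strong assertion \eqref{CD_03} under \eqref{CD_03_0}, I would apply \eqref{P_uni99} with ``system $1$'' taken to be the $n$-th data and ``system $2$'' the limit $[p,z]$, and with $J_n,R_{0,n},R_{1,n}$ the quantities from \eqref{P_uni00}--\eqref{P_uni01}. Here $\int_0^T R_{0,n}(t)\,dt$ is bounded uniformly in $n$ by Cauchy--Schwarz applied to the $\sH$-norms together with the $L^\infty(Q)$-bound on $\omega_n$; $J_n(0)\to 0$ because $[p_{0,n},z_{0,n}]\to[p_0,z_0]$ in $[V]^2$, $|z_{0,n}-z_0|_{L^4(\Omega)}\le C_V^{L^4}|z_{0,n}-z_0|_V$, and $\sup_n|a_n(0)|_H<\infty$; $|(h_n-h)(t)|_{V^*}$ and $|(k_n-k)(t)|_{V^*}$ tend to $0$ in $\sH$ by \eqref{CD_03_0}; and $\int_0^T R_{1,n}(t)\,dt\to 0$, since the terms carrying $(\lambda_n-\lambda)$, $(c_n-c)$, $(\xi_n-\xi)$, $(a_n-a)$, $(b_n-b)$ are bounded by $|p|_{C([0,T];V)}^2$ or $|z|_{C([0,T];V)}^2$ times an $\sH$-norm squared that vanishes, while the terms carrying $(\omega_n-\omega)$ and $(A_n-A)$ vanish by dominated convergence from the pointwise a.e.\ convergences in \eqref{CD_01} and the uniform $L^\infty$-bounds. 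Integrating \eqref{P_uni99} and applying Gronwall's lemma yields $\sup_{t\in[0,T]}J_n(t)\to 0$; since $J_n(t)\ge(1\wedge\mu^2)|(p_n-p)(t)|_V^2+(\delta_a\wedge\nu^2)|(z_n-z)(t)|_V^2$, this is exactly \eqref{CD_03}.

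The main obstacle is the limit passage in the second step for the products in which one factor converges only weakly and the other only weakly-$*$ in $L^\infty$ and pointwise a.e.\ (namely $\overline\omega_n\cdot\nabla\overline z_n$, $\overline A_n\nabla\overline z_n$, $\overline a_n\partial_t z_n$): the resolution is the weak-times-strong pairing indicated above, for which it is essential to fix the test function and exploit $V\hookrightarrow L^4(\Omega)$. In the third step the only point demanding care is the split of $\int_0^T R_{1,n}$ into the part controlled by the $\sH$-convergence of $(a_n,b_n,c_n,\lambda_n,\xi_n)$ and the part controlled by the weak-$*$/pointwise convergence of $(\omega_n,A_n)$; everything else is a routine application of Gronwall's lemma.
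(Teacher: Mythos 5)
Your proposal is correct and follows essentially the same route as the paper: uniform bounds from Corollary~\ref{cor:P_est}, Aubin-type compactness plus the limit passage of Key-Lemma~\ref{lem:Psol} and the uniqueness of Key-Lemma~\ref{lem:P_uni} for \eqref{CD_02}, and the Gronwall inequality \eqref{P_uni99} with the $n$-th data as system $1$ and the limit as system $2$ for \eqref{CD_03}. Your treatment of $\int_0^T R_{1,n}\,dt\to 0$ (splitting into the $\sH$-convergent coefficients and the dominated-convergence terms for $\omega_n$, $A_n$) is exactly the content the paper compresses into the remark that $R_n^*\to 0$ in $L^1(0,T)$.
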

\begin{proof}
  By virtue of Corollary \ref{cor:P_est} and the assumption \eqref{CD_01}, we can see that the sequence $\{ [p_n, z_n] \}_{n=1}^\infty$ is bounded in $[W^{1,2}(0,T;V)]^2$. Hence, by applying compactness theory of Aubin's type again, we can find a subsequence $\{ n_j \}_{j=1}^\infty \subset \{ n \}$ with a limiting pair $[\tilde p, \tilde z] \in [\sH]^2$ satisfying the following convergence:
  \begin{gather}
    p_{n_j} \to \tilde p, \, z_{n_j} \to \tilde z \mbox{ in } C(0,T;H) \mbox{ and weakly in } W^{1,2}(0,T;V) \mbox{ as } j \to \infty.
  \end{gather}
  Furthermore, by the same argument as in the proof of Key-Lemma \ref{lem:Psol} and uniqueness result, obtained in Key-Lemma \ref{lem:P_uni}, we can check that $[\tilde p, \tilde z] = \sP(a,b,c,\lambda,\xi,\omega,A)([p_0,z_0],[h,k]) = [p,z]$. Finally, by the uniqueness of the limit, the convergence \eqref{CD_02} is verified without taking a subsequence. 
  
  Next, we confirm the convergence \eqref{CD_03}. By applying Key-Lemma \ref{lem:P_uni} under:
  \begin{gather}
    \left\{ \begin{aligned}
      &[a^1, b^1, c^1, \lambda^1, \xi^1, \omega^1, A^1] = [a_n, b_n, c_n, \lambda_n, \xi_n, \omega_n, A_n],
      \\
      &[p_0^1, z_0^1] = [p_{0,n}, z_{0,n}], \, [h^1, k^1] = [h_n, k_n], \mbox{ and }
    \end{aligned} \right.
    \\
    \left\{ \begin{aligned}
      &[a^2, b^2, c^2, \lambda^2, \xi^2, \omega^2, A^2] = [a, b, c, \lambda, \xi, \omega, A],
      \\
      &[p_0^2, z_0^2] = [p_0, z_0], \, [h^2, k^2] = [h, k],
    \end{aligned} \right.
  \end{gather}
  Gronwall's lemma allows us to find the following estimate:
  \begin{gather}
    J_n(t) \leq R^*\biggl( J_n(0) + |h_n - h|_{\sV^*}^2 + |k_n - k|_{\sV^*}^2 + ((C_V^{L^4})^2 + 1)\int_0^T R_n^*(t)\,dt \biggr),
    \\
    \mbox{ for any } t \in [0,T], \label{CD_04}
  \end{gather}
  where
  \begin{align}
    R^* &:= \exp\Biggl( \left( \frac{12(T + 1)((C_V^{L^4})^2 + 1)}{1 \wedge \delta_a \wedge \mu^2 \wedge \nu^2} \right) \cdot 
    \\
    &\qquad \cdot \sup_{n \in \N} \bigl\{ |\lambda_n|_\sH + |\xi_n|_\sH + |\partial_t a_n|_\sH + |b_n|_\sH + |c_n|_\sH + |\omega_n|_{L^\infty(Q)} + 1 \bigr\} \Biggr), \label{CD_05}
  \end{align}
  \begin{gather}
    J_n(t) := |(p_n - p)(t)|_H^2 + \mu^2 |\nabla (p_n - p)(t)|_{[H]^N}^2 + |\sqrt{a_n(t)}(z_n - z)(t)|_H^2 
    \\
    + \nu^2 |\nabla(z_n - z)(t)|_{[H]^N}^2, \mbox{ for any } t \in [0,T], \label{CD_06}
  \end{gather}
  and
  \begin{align}
    R_n^*(t) &:= |p(t)|_V^2 (|(\lambda_n - \lambda)(t)|_H^2 + |(c_n - c)(t)|_H^2) \label{CD_07}
    \\
    &\quad + |z(t)|_V^2 (|(\xi_n - \xi)(t)|_H^2 + |(a_n - a)(t)|_H^2 + |(b_n - b)(t)|_H^2)
    \\
    &\quad + |\nabla z(t) \cdot (\omega_n - \omega)(t)|_H^2 + |(A_n - A)(t) \nabla z(t)|_{[H]^N}^2 + |p(t) (\omega_n - \omega)(t)|_{[H]^N}^2,
    \\
    &\qquad\qquad \mbox{ for a.e. } t \in (0,T). 
  \end{align}
  Now, \eqref{CD_01} implies that $R^*$ is a finite constant, and $R_n^*$ converges to $0$ in $L^1(0,T)$ as $n \to \infty$. Based on this, letting $n \to \infty$ yields the convergence \eqref{CD_03}.

  Thus, we complete the proof of Corollary \ref{cor:CD}.
\end{proof}

\begin{cor}[Linearity and boundedness of solution operator] \label{cor_bddness}
  Let us denote 
  \\
  $\sZ = [W^{1,2}(0,T;V)]^2$, and let $\sZ$ be a Banach space endowed with a norm $\| \cdot \|: \sZ \longrightarrow [0,\infty)$, defined as:
    \begin{equation}
      \| [p,z] \| := |[p,z]|_{[W^{1,2}(0,T;V)]^2} + |[p,z]|_{C([0,T];[V]^2)}.
    \end{equation}
    We let $[a,b,c,\lambda,\xi,\omega,A] \in \sS$, $[p_0, z_0] \in [V]^2$, $[h,k] \in [\sV^*]^2$, and $[p,z] := \sP([p_0,z_0],[h,k])$ $= \sP(a,b,c,\lambda,\xi,\omega,A)([p_0,z_0],[h,k])$. Then, there exist two positive constants $M_0^* = M_0^*(a,b,c,\lambda,\xi,\omega,A)$, $M_1^* = M_1^*(a,b,c,\lambda,\xi,\omega,A)$ such that:
    \begin{gather}
      M_0^* |[[p_0,z_0],[h,k]]|_{[V]^2 \times [\sV^*]^2} \leq \|[p,z]\| \leq M_1^* |[[p_0,z_0],[h,k]]|_{[V]^2 \times [\sV^*]^2}, 
      \\
      \mbox{ for any } [p_0, z_0] \in [V]^2, \mbox{ and } [h,k] \in [\sV^*]^2. \label{sP_bdd}
    \end{gather}
    Moreover, $\sP$ is an bijective operator from $[V]^2 \times [\sV^*]^2$ to $\sZ$, Therefore, $\sP$ is an isomorphism between a Hilbert space $[V]^2 \times [\sV^*]^2$ and a Banach space $(\sZ, \| \cdot \|)$.
\end{cor}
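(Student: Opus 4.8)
The plan is to prove the two-sided estimate \eqref{sP_bdd} first, and then to deduce from it, together with a surjectivity argument, that $\sP$ is an isomorphism. The upper bound $\|[p,z]\|\le M_1^*\,|[[p_0,z_0],[h,k]]|_{[V]^2\times[\sV^*]^2}$ is essentially a repackaging of Corollary~\ref{cor:P_est}: the estimate \eqref{P_est_97} controls $|[p,z]|_{C([0,T];[V]^2)}$ and, via $\int_0^T(|p(t)|_V^2+|z(t)|_V^2)\,dt\le T\,|[p,z]|_{C([0,T];[V]^2)}^2$, also the $L^2(0,T;[V]^2)$-part of $|[p,z]|_{[W^{1,2}(0,T;V)]^2}$, while \eqref{P_TD_est98}--\eqref{P_TD_est99} control $|\partial_t p|_\sV$ and $|\partial_t z|_\sV$. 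Collecting these bounds yields a constant $M_1^*$ depending on the septuplet only through $C_1$, $C_2$, $|a(0)|_H$, $|A|_{[L^\infty(Q)]^{N\times N}}$ and the fixed quantities $\mu,\nu,\delta_a,T,C_V^{L^4}$.

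For the lower bound I would argue directly from the fact that $[p,z]=\sP([p_0,z_0],[h,k])$ solves (P). Since $p,z\in W^{1,2}(0,T;V)\hookrightarrow C([0,T];V)$, the initial conditions give $|p_0|_V=|p(0)|_V\le|[p,z]|_{C([0,T];[V]^2)}$ and likewise $|z_0|_V\le|[p,z]|_{C([0,T];[V]^2)}$. Next, reading \eqref{P_var1} as an identity for $\langle h(t),\varphi\rangle_V$ and estimating each term in $V^*$ — the products $\lambda p$ and $\xi z$ by $V\hookrightarrow L^4(\Omega)$, the term $\omega\cdot\nabla z$ by $|\omega|_{[L^\infty(Q)]^N}<\infty$, the gradient terms directly — and then integrating in $t$ while using $p,z\in C([0,T];V)$ and $\partial_t p\in\sV$, one obtains $|h|_{\sV^*}\le C\,\|[p,z]\|$; symmetrically, $|k|_{\sV^*}\le C\,\|[p,z]\|$ follows from \eqref{P_var2}, the term $a\,\partial_t z$ being handled via $a\in W^{1,2}(0,T;H)\hookrightarrow C([0,T];H)$ together with $\partial_t z(t)\in V\hookrightarrow L^4(\Omega)$. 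Summing the four contributions produces $M_0^*$. In particular \eqref{sP_bdd} already shows that $\sP$ is injective.

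For surjectivity, take an arbitrary $[p,z]\in\sZ$ and simply read off the data that (P) would impose: put $p_0:=p(0)\in V$, $z_0:=z(0)\in V$, and let $h,k$ be defined by the variational left-hand sides of \eqref{(P)1st} and \eqref{(P)2nd}, i.e.
\[
  \langle h(t),\varphi\rangle_V:=(\partial_t p(t)+\omega(t)\cdot\nabla z(t),\varphi)_H+\int_\Omega(\lambda(t)p(t)+\xi(t)z(t))\varphi\,dx+(\nabla(p+\mu^2\partial_t p)(t),\nabla\varphi)_{[H]^N},
\]
and analogously for $k$ from \eqref{P_var2}. The one point requiring care is that $h,k\in\sV^*=L^2(0,T;V^*)$; this holds termwise: $\partial_t p\in\sV\subset\sV^*$ and $\Lap_0(p+\mu^2\partial_t p)\in\sV^*$ by boundedness of $\Lap_0:V\to V^*$; $\lambda p,\xi z\in L^2(0,T;L^{4/3}(\Omega))\subset\sV^*$ by $V\hookrightarrow L^4(\Omega)$; $\omega\cdot\nabla z\in\sH\subset\sV^*$; $a\,\partial_t z\in\sV^*$ because $a\in C([0,T];H)$ and $\partial_t z(t)\in L^4(\Omega)$; and $\diver(A\nabla z+\nu^2\nabla\partial_t z+p\omega)\in\sV^*$ since $A\nabla z,\nabla\partial_t z\in\sH$, $p\omega\in L^\infty(0,T;L^4(\Omega))\subset\sH$, and $\diver:[H]^N\to V^*$ is bounded. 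With this choice $[p,z]$ solves (P) for the data $([p_0,z_0],[h,k])\in[V]^2\times[\sV^*]^2$, so the uniqueness assertion of Key-Lemma~\ref{lem:P_uni} forces $\sP([p_0,z_0],[h,k])=[p,z]$.

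Finally, $\sP$ is linear — immediate from the linearity of (P) in $([p,z],[p_0,z_0],[h,k])$ together with uniqueness — it is bijective by the two preceding steps, and \eqref{sP_bdd} exhibits explicit bounds for both $\sP$ and $\sP^{-1}$; hence $\sP$ is an isomorphism of the Hilbert space $[V]^2\times[\sV^*]^2$ onto the Banach space $(\sZ,\|\cdot\|)$. I expect the surjectivity step to be the main obstacle: one must verify that ``substituting a prescribed $[p,z]\in\sZ$ into (P)'' genuinely produces forcing terms lying in $\sV^*$, which is exactly where the structural constraints defining $\sS$ (boundedness of $\omega$ and $A$, the $W^{1,2}(0,T;H)$-regularity and positivity of $a$) and the embedding $V\hookrightarrow L^4(\Omega)$ enter decisively.
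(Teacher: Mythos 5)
Your proposal is correct and follows essentially the same route as the paper: the upper bound is read off from Corollary~\ref{cor:P_est}, and your lower bound plus surjectivity argument amounts to constructing the explicit inverse $[p,z]\mapsto\bigl([p(0),z(0)],\sQ(p,z)\bigr)$, where $\sQ$ reads the forcings off the variational identities \eqref{P_var1}--\eqref{P_var2} and is bounded by exactly the termwise $V\hookrightarrow L^4(\Omega)$ estimates you describe. The paper packages the same content as the boundedness of the operators $\sQ$ and $\overline{\sP}$, so the two arguments coincide in substance.
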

\begin{proof}
  Corollary \ref{cor:P_est} implies that $\sP$ is a linear bounded operator from $[V]^2 \times [\sV^*]^2$ to $(\sZ, \| \cdot \|)$ with the constant:
  \begin{equation}
    M_1^* := \left(\frac{48(1 + \mu^2 + \nu^2 + (C_V^{L^4})^2)|a(0)|_H}{1 \wedge \delta_a \wedge \mu^2 \wedge \nu^2} C_2 e^{5 C_1 (T+1)}(|A|_{[L^\infty(Q)]^{N \times N}} + 1)\right)^\frac{1}{2}.
  \end{equation}

  Now, we define a linear operator $\sQ = \sQ(a,b,c,\lambda,\xi,\omega,A): \sZ \longrightarrow [\sV^*]^2$ which maps $[p,z]$ to a pair of bounded linear functionals $[\tilde h, \tilde k] =: \sQ(p,z) \in [\sV^*]^2$, given as:
  \begin{gather}
    \langle \tilde h, \varphi \rangle_\sV := \int_0^T (\partial_t p(t) + \omega(t) \cdot \nabla z(t), \varphi(t))_H \,dt + \int_0^T \int_\Omega (\lambda(t) p(t) + \xi(t)z(t)) \varphi(t) \,dxdt 
    \\
    + \int_0^T (\nabla (p + \mu^2 \partial_t p)(t), \nabla \varphi(t))_{[H]^N} \,dt, \mbox{ for any } \varphi \in \sV,
  \end{gather}
  and
  \begin{gather}
    \langle \tilde k,\psi \rangle_\sV := \int_0^T \int_\Omega (a(t) \partial_t z(t) + b(t) z(t) + c(t) p(t)) \psi(t) \,dxdt 
    \\
    + \int_0^T (A(t) \nabla z(t) + \nu^2\nabla \partial_t z(t) + p(t)\omega(t), \nabla \psi(t))_{[H]^N} \,dt, \mbox{ for any } \psi \in \sV.
  \end{gather}
  Then, one can see that $\sQ$ is a linear bounded operator from $\sZ$ to $[\sV^*]^2$ together with the following computations:
  \begin{align}
    &\quad |\langle \tilde h, \varphi \rangle_\sV| 
    \\
    &\leq (|\partial_t p|_\sH + |\omega|_{[L^\infty(Q)]^N} |\nabla z|_{[\sH]^N})|\varphi|_\sH + (C_V^{L^4})^2(|\lambda|_\sH |p|_{C([0,T];V)} + |\xi|_\sH |z|_{C([0,T];V)}) |\varphi|_\sV
    \\
    &\qquad + (|\nabla p|_{[\sH]^N} + \mu^2 |\nabla \partial_t p|_{[\sH]^N}) |\nabla \varphi|_{[\sH]^N} 
    \\
    &\leq 2 (1 + \mu^2)\bigl( (C_V^{L^4})^2 (|\lambda|_\sH + |\xi|_\sH) + |\omega|_{[L^\infty(Q)]^N} + 1\bigr)\| [p,z] \| \cdot |\varphi|_\sV,
    \\
    &\qquad\qquad\qquad\qquad\qquad\qquad \mbox{ for any } \varphi \in \sV,
  \end{align}
  and
  \begin{align}
    &\quad|\langle \tilde k,\psi \rangle_\sV| 
    \\
    &\leq (C_V^{L^4})^2(|a|_{C([0,T];H)} |\partial_t z|_\sV + |b|_\sH |z|_{C([0,T];V)} + |c|_\sH |p|_{C([0,T];V)})|\psi|_\sV 
    \\
    &\qquad + (|A|_{[L^\infty(Q)]^{N \times N}} |\nabla z|_{[\sH]^N} + \nu^2 |\nabla \partial_t z|_{[\sH]^N} + |\omega|_{[L^\infty(Q)]^N} |p|_\sH) |\nabla \psi|_{[\sH]^N}
    \\
    &\leq 2 (1 + \nu^2) \bigl( (C_V^{L^4})^2 (|a|_{C([0,T];H)} + |b|_\sH + |c|_\sH) + |A|_{[L^\infty(Q)]^{N \times N}} + |\omega|_{[L^\infty(Q)]^N} + 1\bigr)\cdot 
    \\
    &\qquad \cdot \| [p,z] \| \cdot |\psi|_\sV.
  \end{align}
  and hence,
  \begin{align}
    &\quad |\sQ(p,z)|_{[\sV^*]^2} \leq \sqrt{2}(|\tilde h|_{\sV^*} + |\tilde k|_{\sV^*}) \label{P_bdd_1}
    \\
    &\leq \tilde C_4 (|a|_{C([0,T];H)} + |b|_\sH + |c|_\sH + |\lambda|_\sH + |\xi|_\sH + |A|_{[L^\infty(Q)]^{N \times N}} + |\omega|_{[L^\infty(Q)]^N} + 1) \cdot
    \\
    &\qquad \cdot \| [p,z] \|,
  \end{align}
  with
  \begin{equation}
    \tilde C_4 := 4\sqrt{2} (1 + \mu^2 + \nu^2)(1 + (C_V^{L^4})^2).
  \end{equation}

  Besides, we define a linear operator $\overline{\sP} = \overline{\sP}(a,b,c,\lambda,\xi,\omega,A): \sZ \longrightarrow [V]^2 \times [\sV^*]^2$, by letting:
  \begin{equation}
    \overline{\sP}: [p,z] \in \sZ \mapsto \overline{\sP}([p,z]) := \Bigl( [p(0), z(0)], \sQ(p,z) \Bigr) \in [V]^2 \times [\sV^*]^2.
  \end{equation}
  Then, we can check that $\overline{\sP}$ is the inverse map of $\sP$. Moreover, noting that:
  \begin{equation}
    |[p(0), z(0)]|_{[V]^2} \leq |[p,z]|_{C([0,T];[V]^2)} \leq \|[p,z]\|. \label{P_bdd_2}
  \end{equation}
  we can say that $\overline{\sP}$ is a bounded linear operator from $\sZ$ to $[V]^2 \times [\sV^*]^2$, asserted in \eqref{sP_bdd}, with the constant $M_0^*$, given by:
  \begin{align}
    &M_0^* = 2^{-\frac{1}{2}}(\tilde C_4 + 1)^{-1} \cdot
    \\
    &\cdot (|a|_{C([0,T];H)} + |b|_\sH + |c|_\sH + |\lambda|_\sH + |\xi|_\sH + |A|_{[L^\infty(Q)]^{N \times N}} + |\omega|_{[L^\infty(Q)]^N} + 1)^{-1}.
  \end{align}

  Thus, we finish the proof of Corollary \ref{cor_bddness}.

\end{proof}

\section{Proofs of Main Theorems}

This section is devoted to the proofs of Main Theorems. The each proof will be presented in individual subsections.

\subsection{Proof of Main Theorem \ref{mth1}} \label{proof1}
Fix $\varepsilon \geq 0$. Also, we fix a forcing pair $[\tilde u, \tilde v] \in \sU_\ad$. Invoking the definition of $\sJ_\varepsilon$ as in \eqref{cost}, we have the following estimate:
\begin{equation}
  0 \leq \underline{\sJ_\varepsilon} := \inf_{[u,v] \in \sU_\ad} \sJ_\varepsilon(u,v) \leq \sJ_\varepsilon(\tilde u, \tilde v) < \infty.
\end{equation}
Therefore, we can find a sequence of forcing pairs $\{ [u_n, v_n] \}_{n=1}^\infty \subset \sU_\ad$ such that
\begin{equation}
  \sJ_\varepsilon(u_n, v_n) \downarrow \underline{\sJ_\varepsilon} \mbox{ as } n \to \infty, \label{pr:exist1}
\end{equation}
and
\begin{equation}
  \frac{M_u}{2} |u_n|_{\sH}^2 + \frac{M_v}{2} |v_n|_\sH^2 \leq \sJ_\varepsilon(\tilde u, \tilde v) < \infty. \label{pr:exist2}
\end{equation}
By using the estimate \eqref{pr:exist2}, we see that there exist a subsequence $\{ [u_n, v_n] \}_{n=1}^\infty \subset \sU_\ad$ (not relabeled), and a pair of functions $[u^*, v^*] \subset \sU_\ad$ such that
\begin{equation}
  [\sqrt{M_u} u_n, \sqrt{M_v} v_n] \to [\sqrt{M_u} u^*, \sqrt{M_v} v^*] \mbox{ weakly in } [\sH]^2 \mbox{ as } n \to \infty. \label{pr:exist3}
\end{equation}

Now, based on Key-Lemma \ref{lem:solvability}, we let $[\eta^*, \theta^*]$ be the solution to (S)$_\varepsilon$ for the initial data $[\eta_0, \theta_0]$ and the forcings $[u^*, v^*]$, and let $[\eta_n, \theta_n]$ be the solution to (S)$_\varepsilon$ corresponding to the initial data $[\eta_0, \theta_0]$ and forcing pair $[u_n, v_n]$, for any $n \in \N$. Noting that
\begin{equation}
  \left\{ \begin{aligned}
    &[\eta_n(0), \theta_n(0)] = [\eta^*(0), \theta^*(0)] = [\eta_0, \theta_0] \mbox{ in } H,
    \\
    &|u_n|_{L^\infty(Q)} \leq |\underline{u}|_{L^\infty(Q)} \vee |\overline{u}|_{L^\infty(Q)},
  \end{aligned}
  \right. \ \mbox{ for any } n \in \N, 
\end{equation}
we can derive the following convergences as $n \to \infty$ by virtue of Key-Lemma \ref{lem:CD}:
\begin{equation}
  [\eta_n, \theta_n] \to [\eta^*, \theta^*] \mbox{ in } [C([0,T];H)]^2 \mbox{ as } n \to \infty. \label{pr:exist4}
\end{equation}
Therefore, on account of \eqref{pr:exist3} and \eqref{pr:exist4}, we can compute that:
\begin{align}
  \sJ_\varepsilon(u^*, v^*) &= \frac{M_\eta}{2}|\eta^* - \eta_\ad|_\sH^2 + \frac{M_\theta}{2} |\theta^* - \theta_\ad|_\sH^2 + \frac{M_u}{2} |u^*|_\sH^2 + \frac{M_v}{2} |v^*|_\sH^2 
  \\
  &\leq \frac{M_\eta}{2}\lim_{n \to \infty} |\eta_n - \eta_\ad|_\sH^2 + \frac{M_\theta}{2} \lim_{n \to \infty}|\theta_n - \theta_\ad|_\sH^2 \label{pr:exist6}
  \\
  &\qquad + \frac{1}{2} \varliminf_{n \to \infty} |\sqrt{M_u} u_n|_\sH^2 + \frac{1}{2} \varliminf_{n \to \infty} |\sqrt{M_v} v_n|_\sH^2
  \\
  &\leq \varliminf_{n \to \infty} \sJ_\varepsilon(u_n, v_n) = \underline{\sJ_\varepsilon}.
\end{align}
\eqref{pr:exist6} implies that 
\begin{equation}
  \sJ_\varepsilon(u^*, v^*) = \min_{[u,v] \in \sU_\ad} \sJ_\varepsilon(u,v),
\end{equation}
and thus, we complete the proof of Main Theorem \ref{mth1}. \qed

\subsection{Proof of Main Theorem \ref{mth2}} \label{proof2}
Let $[\tilde u, \tilde v] \in \sU_\ad$ be fixed. Let $[\tilde \eta, \tilde \theta]$ be the solution to (S)$_\varepsilon$ for the initial pair $[\eta_0, \theta_0]$ and forcings $[\tilde u, \tilde v]$, and for any $n \in \N$, $[\tilde \eta_n, \tilde \theta_n]$ be the solution to (S)$_{\varepsilon_n}$ for initial pair $[\eta_{0,n}, \theta_{0,n}]$ and forcing pair $[\tilde u, \tilde v]$. Applying Key-Lemma \ref{lem:CD}, we can see that
\begin{equation}
  [\tilde \eta_n, \tilde \theta_n] \to [\tilde \eta, \tilde \theta] \mbox{ in } C([0,T];H) \mbox{ as } n \to \infty, \mbox{ and } \lim_{n \to \infty} \sJ_{\varepsilon_n} (\tilde u, \tilde v) = \sJ_\varepsilon(\tilde u, \tilde v), \label{pr:CD1}
\end{equation} 
and hence,
\begin{equation}
  \overline{\sJ} := \sup_{n \in \N} \sJ_{\varepsilon_n} (\tilde u, \tilde v) < \infty.
\end{equation}

Next, since $[u_n^*, v_n^*]$ is an optimal control of (OCP)$_{\varepsilon_n}$, it is computed that
\begin{equation}
  \frac{M_u}{2}|u_n^*|_\sH^2 + \frac{M_v}{2}|v_n^*|_\sH^2 \leq \sJ_{\varepsilon_n} (u_n^*, v_n^*) \leq \sJ_{\varepsilon_n}(\tilde u, \tilde v) \leq \overline{\sJ}, \mbox{ for } n = 1,2,3,\dots.
\end{equation}
Therefore, there exist a subsequence $\{ n_j \}_{j=1}^\infty \subset \{ n \}$ and a pair of function $[u^*, v^*] \in [\sH]^2$ such that 
\begin{equation}
  [\sqrt{M_u} u_n^*, \sqrt{M_v} v_n^*] \to [\sqrt{M_u} u^*,\sqrt{M_v} v^*] \mbox{ weakly in } [\sH]^2 \mbox{ as } n \to \infty. \label{pr:CD2}
\end{equation}
Moreover, invoking:
\begin{equation}
  \sup_{n \in \N} |u_n^*|_{L^\infty(Q)} \leq |\underline{u}|_{L^\infty(Q)} \vee |\overline{u}|_{L^\infty(Q)},
\end{equation}
we can apply Key-Lemma \ref{lem:CD} and obtain that
\begin{equation}
  [\eta_n^*, \theta_n^*] \to [\eta^*, \theta^*] \mbox{ in } C([0,T];H) \mbox{ as } n \to \infty. \label{pr:CD3}
\end{equation}

Now, having in mind \eqref{pr:CD1}--\eqref{pr:CD3}, we arrive at:\noeqref{pr:CD2}
\begin{align}
  \sJ_\varepsilon(u^*, v^*) &= \frac{M_\eta}{2}|\eta^* - \eta_\ad|_\sH^2 + \frac{M_\theta}{2} |\theta^* - \theta_\ad|_\sH^2 + \frac{M_u}{2} |u^*|_\sH^2 + \frac{M_v}{2} |v^*|_\sH^2 
  \\
  &\leq \frac{M_\eta}{2}\lim_{n \to \infty} |\eta_n^* - \eta_\ad|_\sH^2 + \frac{M_\theta}{2} \lim_{n \to \infty}|\theta_n^* - \theta_\ad|_\sH^2 \label{pr:CD4}
  \\
  &\qquad + \frac{1}{2} \varliminf_{n \to \infty} |\sqrt{M_u} u_n^*|_\sH^2 + \frac{1}{2} \varliminf_{n \to \infty} |\sqrt{M_v} v_n^*|_\sH^2
  \\
  &\leq \varliminf_{n \to \infty} \sJ_{\varepsilon_n}(u_n^*, v_n^*) \leq \lim_{n \to \infty} \sJ_{\varepsilon_n} (\tilde u, \tilde v) = \sJ_\varepsilon (\tilde u, \tilde v).
\end{align}
Since the pair $[\tilde u, \tilde v] \in \sU_\ad$ is arbitrary, \eqref{pr:CD4} means that $[u^*, v^*]$ is an optimal control of (OCP)$_\varepsilon$.

Thus, the proof of Main Theorem \ref{mth2} is completed. \qed

\subsection{Proof of Main Theorem \ref{mth3}} \label{proof3}

\begin{keylem} \label{lem:Gateaux}
  Set $\sX := L^\infty(Q) \times \sH$. Let $\varepsilon \in (0,1)$ be fixed, and let $\Lambda_\varepsilon:\sX \longrightarrow [\sH]^2$ be the solution operator to (S)$_\varepsilon$ which maps $[u,v] \in \sX$ to the solution $\Lambda_\varepsilon(u,v) := [\eta, \theta]$ to the state system (S)$_\varepsilon$. 
  
  \smallskip
  \noindent
  \textbf{(I)} The restriction of solution operator $\Lambda_\varepsilon|_\sX:\sX \longrightarrow [\sH]^2$ is G\^{a}teaux differentiable over $\sX$. More precisely,
  \begin{gather}
    \bigl( (\Lambda_\varepsilon|_\sX)'(u,v) \bigr)(h,k) = D_{[h,k]} \Lambda_\varepsilon(u,v) := \lim_{\delta \to 0} \frac{\Lambda_\varepsilon(u + \delta h, v + \delta k) - \Lambda_\varepsilon(u,v)}{\delta}
    \\
    = \sP_\varepsilon(L_u h,L_v k), \mbox{ for any } [u,v] \in \sX, \mbox{ and any direction } [h,k] \in \sX,
  \end{gather}
  In this context, $\sP_\varepsilon \in \sL([\sH]^2)$ is given as a restriction $\sP|_{\{[0,0]\} \times [\sH]^2}$ of the bounded linear operator $\sP = \sP(a,b,c,\lambda,\xi,\omega,A)$, defined in \eqref{sol_op_P} in the case:
  \begin{equation}
    \left\{ \begin{aligned}
      &a = \alpha_0(\eta),
      \\
      &b = 0,
      \\
      &c = \alpha_0'(\eta) \partial_t \theta,
      \\
      &\lambda = g'(\eta) + \alpha''(\eta) \gamma_\varepsilon(\nabla \theta),
      \\
      &\xi = 0,
      \\
      &\omega = \alpha'(\eta) \nabla \gamma_\varepsilon(\nabla \theta),
      \\
      &A = \alpha(\eta) \nabla^2 \gamma_\varepsilon(\nabla \theta),
    \end{aligned} \right. \mbox{ in } [\sH]^7.
  \end{equation}
  Moreover, for any $(u,v) \in \sX$, the G\^{a}teaux derivative $(\Lambda_\varepsilon|_\sX)'(u,v)$ can be uniquely extended to $[\sH]^2$.
  
  \noindent
  \textbf{(II)} The restriction of cost functional $\sJ_\varepsilon|_\sX:\sX \longrightarrow [0,\infty)$ is G\^{a}teaux differentiable over $\sX$. Furthermore, for any $[u,v] \in \sX$, the G\^{a}teaux derivative $(\sJ_\varepsilon|_\sX)'(u,v)$ admits a unique extension $\sJ_\varepsilon'(u,v) \in ([\sH]^2)^* = [\sH]^2$ such that
  \begin{gather}
    \begin{aligned}
      &\bigl(\sJ_\varepsilon'(u,v), [h,k] \bigr)_{[\sH]^2} = D_{[h,k]} \sJ_\varepsilon(u,v) := \lim_{\delta \to 0} \frac{\sJ_\varepsilon(u + \delta h, v + \delta k) - \sJ_\varepsilon(u,v)}{\delta}
      \\
      &= ([M_\eta(\eta - \eta_{\rm ad}), M_\theta(\theta - \theta_{\rm ad})], \sP_\varepsilon(L_u h,L_v k))_{[\sH]^2} + ([M_u u,M_v v], [h,k])_{[\sH]^2},
    \end{aligned}
    \\
    \mbox{ for any } [u,v] \in \sX, \mbox{ and any direction } [h,k] \in \sX.
  \end{gather}
  
\end{keylem}
\begin{proof}
  Let $[u,v], \,[h,k] \in \sX$ be fixed. Let us take any constant $\delta \in [-1,1] \setminus \{ 0 \}$, and set $[\eta^\delta, \theta^\delta] := \Lambda_\varepsilon(u + \delta h, v + \delta k)$. Since it holds that 
  \begin{equation}
    [u + \delta h, v + \delta k] \to [u,v] \mbox{ in } \sX \mbox{ as } \delta \to 0,
  \end{equation}
  Key-Lemma \ref{lem:CD} implies the following convergences as $\delta \to 0$:
  \begin{gather}
    \left\{ \begin{aligned}
      &\eta^\delta \to \eta \mbox{ in } C([0,T];H), \, \sV, \ \mbox{ weakly in } W^{1,2}(0,T;V), \mbox{ and weakly-$*$ in } L^\infty(Q),
      \\
      &\theta^\delta \to \theta \mbox{ in } C([0,T];H), \, \sV, \ \mbox{ weakly in } W^{1,2}(0,T;V),
      \\
      &\eta^\delta(t) \to \eta(t), \, \theta^\delta(t) \to \theta(t) \mbox{ in } V, \ \mbox{ for any } t \in [0,T].
    \end{aligned} \right. \label{Gateaux00}
  \end{gather}
  Additionally, we can assume that:
  \begin{equation}
    \eta^\delta \to \eta, \, \theta^\delta \to \theta, \, \nabla \theta^\delta \to \nabla \theta \mbox{ in the pointwise sense, a.e. in } Q \mbox{ as } \delta \to 0. \label{Gateaux00-1}
  \end{equation}

  Now, we set $[\chi^\delta, \zeta^\delta] = [\frac{\eta^\delta - \eta}{\delta}, \frac{\theta^\delta - \theta}{\delta}]$. Then, based on standard linearization process (cf. \cite{MR4395725}), we can say that $[\chi^\delta, \zeta^\delta]$ solves the following variational system:
  \begin{gather}
    (\partial_t \chi^\delta(t) + \omega^\delta(t) \cdot \nabla \zeta^\delta(t), \varphi)_H + \int_\Omega \lambda^\delta(t) \chi^\delta(t) \varphi \,dx +(\nabla \chi^\delta(t), \nabla \varphi)_{[H]^N} \label{Gateaux01}
    \\
    + \mu^2 (\nabla \partial_t \chi^\delta(t), \nabla \varphi)_{[H]^N} = (L_u h(t),\varphi)_H, \mbox{ for any } \varphi \in V, \mbox{ and for a.e. } t \in (0,T),
  \end{gather}
  and
  \begin{gather}
    \int_\Omega (a^\delta(t) \partial_t \zeta^\delta(t) + c^\delta(t) \chi^\delta(t)) \psi \,dx + (A^\delta(t) \nabla \zeta^\delta(t) + \chi^\delta(t)\omega^\delta(t), \nabla \psi)_{[H]^N} \label{Gateaux02}
    \\
    + \nu^2(\nabla \partial_t \zeta^\delta(t), \nabla \psi)_{[H]^N} = \langle \overline{k}^\delta(t),\psi \rangle_V, \ \mbox{ for any }  \psi \in V, \ \mbox{ and for a.e. } t \in (0,T),
  \end{gather}
  where
  \begin{equation}
    \left\{ \begin{aligned}
      &\overline{a}^\delta := \alpha_0(\eta^\delta),
      \\
      &\overline{c}^\delta := \partial_t \theta \int_0^1 \alpha_0'(\eta + \delta \sigma \chi^\delta)\,d\sigma,
      \\
      &\overline{\lambda}^\delta := \int_0^1 \bigl(g'(\eta + \delta \sigma \chi^\delta) + \alpha''(\eta + \delta \sigma \chi^\delta) \gamma_\varepsilon(\nabla \theta^\delta)\bigr) \,d\sigma
      \\
      &\overline{\omega}^\delta := \alpha'(\eta) \int_0^1 \nabla \gamma_\varepsilon(\nabla (\theta + \delta \sigma \zeta^\delta)) \,d\sigma,
      \\
      &\overline{A}^\delta := \alpha(\eta) \int_0^1 \nabla^2 \gamma_\varepsilon(\nabla (\theta + \delta \sigma \zeta^\delta)) \,d\sigma,
    \end{aligned} \right. \mbox{ in } [\sH]^5,
  \end{equation}
  and
  \begin{equation}
    \overline{k}^\delta := L_v k - \diver\biggl( \chi^\delta \overline{\omega}^\delta - \chi^\delta \int_0^1 \alpha'(\eta + \delta \sigma \chi^\delta) \nabla \gamma_\varepsilon(\nabla \theta^\delta)\,d\sigma \biggr) \mbox{ in } \sV^*.
  \end{equation}
  In other words, by setting an operator $\sP_\delta$ as the restriction $\sP|_{[0,0] \times [\sV]^2}$ of the linear isomorphism $\sP = \sP(a,b,c,\lambda,\xi,\omega,A)$ defined in \eqref{sol_op_P} in the case:
  \begin{equation}
    [a,b,c,\lambda,\xi,\omega,A] := [\overline{a}^\delta, 0, \overline{c}^\delta, \overline{\lambda}^\delta, 0, \overline{\omega}^\delta, \overline{A}^\delta] \mbox{ in } [\sH]^7,
  \end{equation}
  we can see that $[\chi^\delta, \zeta^\delta] = \sP_\delta(L_u h,\overline{k}^\delta)$. 
  
  Now, noting that:
  \begin{gather}
    [\delta \chi^\delta, \delta \zeta^\delta] = [\eta^\delta - \eta, \theta^\delta - \theta] \to [0,0], \mbox{ in } [L^\infty(0,T;V)]^2, \mbox{ and }
    \\
    \delta \chi^\delta \to 0 \mbox{ weakly-$*$ in } L^\infty(Q),
  \end{gather}
  using the convergences \eqref{Gateaux00}, \eqref{Gateaux00-1} and Lebesgue's dominated convergence theorem yields the following convergences as $\delta \to 0$:
  \begin{gather}
    \overline{a}^\delta \to a \mbox{ in } \sH \mbox{ and weakly in } W^{1,2}(0,T;H),
    \\
    [\overline{c}^\delta, \overline{\lambda}^\delta] \to [c, \lambda] \mbox{ in } [\sH]^2, \label{Gateaux03}
  \end{gather}
  and
  \begin{equation}
    \overline{\omega}^\delta \to \omega, \,\overline{A}^\delta \to A \mbox{ weakly-$*$ in } L^\infty(Q), \mbox{ and in the pointwise sense, a.e. in } Q. \label{Gateaux04}
  \end{equation}\noeqref{Gateaux04}
  Now, to apply Corollary \ref{cor:CD}, we derive the convergence of $\overline{k}^\delta$. By setting:
  \begin{align}
    m_* := \sup_{0 < |\delta| < 1} |\eta^\delta|_{L^\infty(Q)} \geq |\eta|_{L^\infty(Q)},
  \end{align}
  one can see that
  \begin{align}
    |\overline{k}^\delta(t)|_{V^*}^2 &\leq \Bigl( L_v |k(t)|_{V^*} + |\chi^\delta(t)|_H \Bigl(|\alpha'(\eta(t))|_{L^\infty(\Omega)} + \int_0^1 |\alpha'(\eta + \delta \sigma \chi^\delta)(t)|_{L^\infty(\Omega)}\,d\sigma \Bigr) \Bigr)^2
    \\
    &\leq 8 \bigl( L_v^2 |k(t)|_{V^*}^2 + |\chi^\delta(t)|_H^2 |\alpha'|_{L^\infty(-m_*, m_*)}^2 \bigr), 
    \\
    &\qquad \mbox{ for a.e. } t \in (0,T), \mbox{ and for any } \delta \in [-1,1] \setminus \{ 0 \}.
  \end{align}
  Now, we set:
  \begin{align}
    &\begin{gathered}
      J_\delta(t) := |\chi^\delta(t)|_H^2 + \mu^2 |\nabla \chi^\delta(t)|_{[H]^N}^2 + |\sqrt{\overline{a}^\delta(t)} \zeta^\delta(t)|_H^2 + \nu^2 |\nabla \zeta^\delta(t)|_{[H]^N}^2,
      \\
      \mbox{ for any } t \in [0,T], \mbox{ and }
    \end{gathered}
    \\
    &\begin{aligned}
      R^{*}(t) &:= \left( \frac{12((C_V^{L^4})^2 + 1)}{1 \wedge \mu^2 \wedge \delta_* \wedge \nu^2} \right) \cdot
      \\
      &\qquad \cdot \sup_{\delta \in (0,1)} \bigl(|\partial_t \overline{a}^\delta(t)|_H + |\overline{c}^\delta(t)|_H + |\overline{\lambda}^\delta(t)|_H + |\overline{\omega}^\delta(t)|_{[L^\infty(\Omega)]^N} + 1 \bigr).
    \end{aligned}
  \end{align}
  We apply Key-Lemma \ref{lem:P_uni} under the case when:
  \begin{gather}
    \left\{ \begin{aligned}
      &[a^1, b^1, c^1, \lambda^1, \xi^1, \omega^1, A^1] = [a^2, b^2, c^2, \lambda^2, \xi^2, \omega^2, A^2] 
      \\
      &\qquad = [\overline{a}^\delta, 0, \overline{c}^\delta, \overline{\lambda}^\delta, 0, \overline{\omega}^\delta, \overline{A}^\delta] \mbox{ in } [\sH]^7,
      \\
      &[p_0^1, z_0^1] = [p_0^2, z_0^2] = [0,0] \mbox{ in } [V]^2,
      \\
      &[h^1, k^1] = [L_u h, \overline{k}^\delta], \, [h^2, k^2] = [0,0] \mbox{ in } [\sV^*]^2,
      \\
      &[p^1,z^1] = \sP_\delta(L_u h, \overline{k}^\delta), \, [p^2, z^2] = [0,0] \mbox{ in } [\sH]^2,
    \end{aligned} \right.
    \\
    \mbox{ for any } \delta \in [-1,1] \setminus \{ 0 \},
  \end{gather}
  and find the following estimate:
  \begin{align}
    \frac{d}{dt}J_\delta(t) &\leq R^*(t) J_\delta(t) + L_u^2 |h(t)|_{V^*}^2 + |\overline{k}^\delta(t)|_{V^*}^2
    \\
    &\leq (R^*(t) + 8|\alpha'|_{L^\infty(I_*)}^2) J_\delta(t) + 8|[L_u h, L_v k](t)|_{V^*}^2,
    \\
    &\mbox{ for a.e. } t \in (0,T), \mbox{ and any } \delta \in [-1,1] \setminus \{ 0 \}.
  \end{align}
  Subsequently, by using Gronwall's lemma, we can derive that:
  \begin{itemize}
    \item[($\sharp 0$)] $\{ \chi^\delta \}_{\delta \in (0,1)}$ and $\{ \zeta^\delta \}_{\delta \in (0,1)}$ are bounded in $L^\infty(0,T;V)$.
  \end{itemize}
  Hence, combining \eqref{Gateaux01}--\eqref{Gateaux02} and ($\sharp 1$), we can compute that
  \begin{align}
    \langle \overline{k}^\delta - L_v k, \psi \rangle_{\sV} &= \left( \chi^\delta, \biggl( \overline{\omega}^\delta - \int_0^1 \alpha'(\eta + \delta \sigma \chi^\delta) \nabla \gamma_\varepsilon(\nabla \theta^\delta) \,d\sigma \biggr) \cdot \nabla \psi \right)_\sH
    \\
    &\to 0, \ \mbox{ for any } \psi \in \sV,
  \end{align}
  and therefore,
  \begin{equation}
    \overline{k}^\delta \to L_v k \mbox{ weakly in } \sV^* \mbox{ as } \delta \to 0. \label{Gateaux05}
  \end{equation}
  
  On account of the convergences \eqref{Gateaux03}--\eqref{Gateaux05}, we can apply Corollary \ref{cor:CD} and obtain that:
  \begin{gather}
    [\chi^\delta, \zeta^\delta] = \sP_\delta(L_u h, \overline{k}^\delta) \to [\chi, \zeta] = \sP_\varepsilon(L_u h, L_v k) \mbox{ in } C([0,T];H)
    \\
    \mbox{ and weakly in } W^{1,2}(0,T;V), \mbox{ as } \delta \to 0. \label{Gateaux06}
  \end{gather}
  This convergence and Corollary \ref{cor_bddness} imply that:
  \begin{itemize}
    \item[($\sharp 1$)] $\Lambda_\varepsilon|_\sX$ is G\^{a}teaux differentiable over $\sX$, and bounded in the norm of $[\sH]^2$.
  \end{itemize}
  Therefore, $(\Lambda_\varepsilon|_\sX)'$ admits a continuous linear extension to $[\sH]^2$, and thus, we complete the proof of the assertion (I).

  Next, we verify the statement of (II). Let $[u,v] \in \sX$ be fixed. One can compute that
  \begin{gather}
    \begin{aligned}
      &\frac{\sJ_\varepsilon(u + \delta h, v + \delta k) - \sJ_\varepsilon(u,v)}{\delta}
      \\
      &\quad = M_\eta \biggl( \Bigl( \frac{\eta^\delta + \eta}{2} - \eta_\ad \Bigr), \chi^\delta \biggr)_\sH + M_\theta \biggl( \Bigl( \frac{\theta^\delta + \theta}{2} - \theta_\ad \Bigr), \zeta^\delta \biggr)_\sH \label{Gateaux07}
      \\
      &\qquad + M_u \biggl( \Bigl( u + \frac{\delta h}{2} \Bigr), h \biggr)_\sH + M_v \biggl( \Bigl( v + \frac{\delta k}{2} \Bigr), k \biggr)_\sH,
    \end{aligned}
    \\
    \mbox{ for any direction } [h,k] \in \sX.
  \end{gather}
  From the convergences \eqref{Gateaux00} and \eqref{Gateaux06}, letting $\delta \to 0$ in \eqref{Gateaux07} yields that
  \begin{gather}
    D_{[h,k]} \sJ_\varepsilon(u,v) = ([M_\eta(\eta - \eta_{\rm ad}), M_\theta(\theta - \theta_{\rm ad})], \sP_\varepsilon(L_u h,L_v k))_{[\sH]^2} 
    \\
    + ([M_u u,M_v v], [h,k])_{[\sH]^2}, \mbox{ for any } [h,k] \in \sX.
  \end{gather}
  Now, having in mind the estimates in Corollary \ref{cor_bddness}, the following inequality holds:
  \begin{align}
    &|D_{[h,k]} \sJ_\varepsilon(u,v)| \leq \bigl( M_*^1(M_\eta + M_\theta)(L_u + L_v) + M_u + M_v \bigr) \cdot 
    \\
    &\quad \cdot \bigl( |[\eta - \eta_\ad, \theta - \theta_\ad]|_{[\sH]^2} + |[u,v]|_{[\sH]^2} \bigr) |[h,k]|_{[\sH]^2}, \mbox{ for any } [h,k] \in \sX.
  \end{align}
  Based on the above, one can say that:
  \begin{itemize}
    \item[($\sharp 2$)] the mapping $[h,k] \in \sX \mapsto D_{[h,k]} \sJ_\varepsilon(u,v)$ is a linear functional, and bounded in the norm of $[\sH]^2$.
  \end{itemize}
  Therefore, since $\sX$ is a dense subset of $[\sH]^2$, Riesz's theorem enable us to find the required functional $\sJ_\varepsilon'(u,v) \in ([\sH]^2)^* = [\sH]^2$, which is the unique extension of the G\^{a}teaux derivative of $\sJ_\varepsilon|_\sX(u,v) \in \sX^*$ at $[u,v] \in \sX$.

  Thus, we finish the proof of Key-Lemma \ref{lem:Gateaux}.

\end{proof}

\begin{keylem} \label{lem:conj}
  Let $\varepsilon \in (0,1)$ be fixed. Let $[u_\varepsilon^*, v_\varepsilon^*]$ be the optimal control of the problem (OCP)$_\varepsilon$ and let $[\eta_\varepsilon, \theta_\varepsilon]$ be the solution to the state system (S)$_\varepsilon$ with the initial data $[\eta_0, \theta_0]$ and the forcing pair $[u^*,v^*]$. We define an operator $\overline{\sP}_\varepsilon \in \sL([\sH]^2)$ by the restriction $\sP|_{\{ [0,0] \} \times [\sH]^2}$ of the bounded linear operator $\sP = \sP(a,b,c,\lambda,\xi,\omega,A)$ as in \eqref{sol_op_P} when:
  \begin{equation}
    \left\{ \begin{aligned}
      &a = \alpha_0(\eta_\varepsilon),
      \\
      &b = 0,
      \\
      &c = \alpha_0'(\eta_\varepsilon) \partial_t \theta_\varepsilon,
      \\
      &\lambda = g'(\eta_\varepsilon) + \alpha''(\eta_\varepsilon) \gamma_\varepsilon(\nabla \theta_\varepsilon),
      \\
      &\xi = 0,
      \\
      &\omega = \alpha'(\eta_\varepsilon) \nabla \gamma_\varepsilon(\nabla \theta_\varepsilon),
      \\
      &A = \alpha(\eta_\varepsilon) \nabla^2 \gamma_\varepsilon(\nabla \theta_\varepsilon),
    \end{aligned} \right. \mbox{ in } [\sH]^7.
  \end{equation}
  Then, the operators $\sP_\varepsilon^*$, defined in Remark \ref{rem:p*}, and $\sP_\varepsilon$ have a conjugate relationship, in the following sense:
  \begin{equation}
    (\sP_\varepsilon^*(u,v),[h,k])_{[\sH]^2} = ([u,v], \overline{\sP}_\varepsilon(h,k))_{[\sH]^2}, \mbox{ for any } [u,v], [h,k] \in [\sH]^2.
  \end{equation}
\end{keylem}
\begin{proof}
  Let the pair of functions $[u,v], [h,k] \in [\sH]^2$ be fixed. Also, we denote $[p,z] := \sP_\varepsilon^*(u,v)$ and $[\chi,\zeta] := \overline{\sP}_\varepsilon(h,k)$. Then, using integration by parts, one can be computed that
  \begin{align}
    &\quad (\sP_\varepsilon^*(u,v), [h,k])_{[\sH]^2} = \int_0^T (h(t),p(t))_H \,dt + \int_0^T (k(t), z(t))_H \,dt
    \\
    &= \int_0^T \Biggl[\Bigl(\bigl(\partial_t \chi + (g'(\eta) + \alpha''(\eta) \gamma_\varepsilon(\nabla \theta))\chi + \alpha'(\eta)\nabla \gamma_\varepsilon(\nabla \theta) \cdot \nabla \zeta\bigr)(t), p(t)\Bigr)_H
    \\
    &\qquad + (\nabla (\chi + \mu^2 \partial_t \chi)(t), \nabla p(t))_{[H]^N} + \bigl( \bigl( \alpha_0(\eta) \partial_t \zeta + \alpha_0'(\eta) \chi \partial_t \theta \bigr)(t), z(t)\bigr)_H 
    \\
    &\qquad + \Bigl(\bigl( \alpha(\eta) \nabla^2 \gamma_\varepsilon(\nabla \theta) \nabla \zeta + \nu^2 \nabla \partial_t \zeta + \alpha'(\eta) \chi \nabla \gamma_\varepsilon(\nabla \theta) \bigr)(t), \nabla z(t)\Bigr)_{[H]^N} \Biggr]\,dt
    \\
    &= \Bigl[ (p,\chi)_H + \mu^2(\nabla p, \nabla \chi)_{[H]^N} + (\alpha_0(\eta)z,\zeta)_H + \nu^2 (\nabla z, \nabla \zeta)_{[H]^N} \Bigr](T)
    \\
    &\quad - \Bigl[ (p,\chi)_H + \mu^2(\nabla p, \nabla \chi)_{[H]^N} + (\alpha_0(\eta)z,\zeta)_H + \nu^2 (\nabla z, \nabla \zeta)_{[H]^N} \Bigr](0)
    \\
    &\quad + \int_0^T \Biggl[\Bigl(\bigl(-\partial_t p + (g'(\eta) + \alpha''(\eta) \gamma_\varepsilon(\nabla \theta))p + \alpha'(\eta) \nabla \gamma_\varepsilon(\nabla \theta) \cdot \nabla z\bigr)(t), \chi(t) \Bigr)_H 
    \\
    &\quad\qquad + (\nabla (p - \mu^2 \partial_t p)(t), \nabla \chi(t))_{[H]^N} - \bigl( \partial_t (\alpha_0(\eta) z)(t), \zeta(t) \bigr)_H
    \\
    &\quad\qquad + \Bigl(\bigl(\alpha(\eta) \nabla^2 \gamma_\varepsilon(\nabla \theta) \nabla z - \nu^2 \nabla \partial_t z + \alpha'(\eta) \nabla \gamma_\varepsilon(\nabla \theta)\bigr)(t), \nabla \zeta(t)\Bigr)_{[H]^N} \Biggr]\,dt 
    \\
    &= ([u,v], [\chi,\zeta])_{[\sH]^2} = ([u,v], \overline{\sP}_\varepsilon(h,k))_{[\sH]^2}.
  \end{align}
  This computation finishes the proof of Key-Lemma \ref{lem:conj}.
\end{proof}

\begin{proof}[The proof of Main Theorem \ref{mth3}]
  Let $[u^*,v^*] \in \sU_\ad$ be an optimal control of (OCP)$_\varepsilon$, with the solution $[\eta_\varepsilon^*, \theta_\varepsilon^*]$ to (S)$_\varepsilon$ corresponding to the initial data $[\eta_0, \theta_0]$ and forcing pair $[u^*,v^*]$. Also, we set $[p_\varepsilon^*, z_\varepsilon^*] = \sP_\varepsilon^*(M_\eta(\eta_\varepsilon^* - \eta_\ad), M_\theta(\theta_\varepsilon^* - \theta_\ad))$, as in Remark \ref{rem:p*}. Since the cost functional takes its minimum at $[u^*, v^*] \in \sU_\ad$, Key-Lemma \ref{lem:Gateaux} and \ref{lem:conj} allow us to compute that:
  \begin{align}
    0 &\leq (\sJ_\varepsilon'(u^*,v^*),[h,k])_{[\sH]^2} = \lim_{\delta \downarrow 0} \frac{\sJ_\varepsilon(u^* + \delta(h - u^*), v^* + \delta k) - \sJ_\varepsilon(u^*,v^*)}{\delta}
    \\
    &= ([M_\eta(\eta_\varepsilon^* - \eta_\ad), M_\theta(\theta_\varepsilon^* - \theta_\ad)], \overline{\sP}_\varepsilon(L_u (h - u^*), L_v k))_{[\sH]^2} 
    \\
    &\qquad + ([M_u u^*, M_v v^*], [h - u^*,k])_{[\sH]^2} \label{NC1}
    \\
    &= (\sP_\varepsilon^*(M_\eta(\eta_\varepsilon^* - \eta_\ad), M_\theta(\theta_\varepsilon^* - \theta_\ad)), [L_u (h - u^*), L_v k])_{[\sH]^2} 
    \\
    &\qquad + ([M_u u^*, M_v v^*], [h - u^*,k])_{[\sH]^2}
    \\
    &= ([L_u p_\varepsilon^* + M_u u^*, L_v z_\varepsilon^* + M_v v^*], [h - u^*, k])_{[\sH]^2}, \ \mbox{ for any } [h,k] \in \sU_\ad.
  \end{align}

  Now, letting $h = u^*$ in \eqref{NC1} yields that:  
  \begin{equation}
    (L_v z_\varepsilon^* + M_v v^*, k)_\sH \geq 0, \ \mbox{ for all } k \in \sH, \label{NC2}
  \end{equation}
  which implies \eqref{NC0b}, and applying this to \eqref{NC1}, we have \eqref{NC0a}.

  Thus, we complete the proof of Main Theorem \ref{mth3}.
\end{proof}

\subsection{Proof of Main Theorem \ref{mth4}} \label{proof4}
Let $[u_\varepsilon^*, v_\varepsilon^*]$ be an optimal control of (OCP)$_\varepsilon$ for $\varepsilon \in (0,1)$, and let $[\eta_\varepsilon^*, \theta_\varepsilon^*]$ be the solution to (S)$_\varepsilon$ corresponding to the initial data $[\eta_0, \theta_0]$ and $[u_\varepsilon^*, v_\varepsilon^*]$ as a forcing pair. By virtue of Main Theorem \ref{mth2} and Key-Lemma \ref{lem:CD}, we can find a subsequence $\{ \varepsilon_n \}_{n=1}^\infty \subset \{ \varepsilon \}$ with $\varepsilon_n \downarrow 0$ as $n \to \infty$, and find limiting pairs $[u^\circ, v^\circ] \in \sU_\ad$, $[\eta^\circ, \theta^\circ] \in [\sH]^2$, and a vector-valued function $\varpi^\circ \in [L^\infty(Q)]^N$, such that:
\begin{equation}
  \begin{gathered}
    \sqrt{M_u} u_{\varepsilon_n}^* \to \sqrt{M_u} u^\circ \mbox{ weakly in } \sH, \mbox{ and weakly-$*$ in } L^\infty(Q),
    \\
    \sqrt{M_v} v_{\varepsilon_n}^* \to \sqrt{M_v} v^\circ \mbox{ weakly in } \sH,
  \end{gathered} \mbox{ as } n \to \infty. \label{NC0_01}
\end{equation}
\begin{equation}
  \left\{ \begin{aligned}
    &\eta_n^* := \eta_{\varepsilon_n}^* \to \eta^\circ \mbox{ in } C([0,T];H), \, \sV, \ \mbox{ weakly in } W^{1,2}(0,T;V),
    \\
    &\qquad \mbox{ and weakly-$*$ in } L^\infty(Q),
    \\
    &\theta_n^* := \theta_{\varepsilon_n}^* \to \theta^\circ \mbox{ in } C([0,T];H), \, \sV, \ \mbox{ weakly in } W^{1,2}(0,T;V),
    \\
    &\eta_n^*(t) \to \eta^\circ(t), \, \theta_n^*(t) \to \theta^\circ(t) \mbox{ in } V, \ \mbox{ for any } t \in [0,T], \mbox{ and }
    \\
    &\eta_n^* \to \eta^\circ, \, \theta_n^* \to \theta^\circ \mbox{ in the pointwise sense, a.e. in } Q.
  \end{aligned} \right. \label{NC0_02}
\end{equation}
\begin{equation}
  \left\{ \begin{aligned}
    &a_n^* := \alpha_0(\eta_n^*) \to a^\circ := \alpha_0(\eta^\circ) \mbox{ weakly in } W^{1,2}(0,T;H), \mbox{ weakly-$*$ in } L^\infty(Q),
  \\
  &\qquad \mbox{ and in the pointwise sense, a.e. in } Q,
  \\
  &\lambda_n^* := g'(\eta_n^*) + \alpha''(\eta_n^*) \gamma_{\varepsilon_n} (\nabla \theta_n^*) \to \lambda^\circ := g'(\eta^\circ) + \alpha''(\eta^\circ) |\nabla \theta^\circ| \mbox{ in } \sH,
  \end{aligned} \right. \label{NC0_03}
\end{equation}
\begin{equation}
  \left\{ \begin{aligned}
    &\nabla \gamma_{\varepsilon_n}(\nabla \theta_n^*) \to \varpi^\circ \mbox{ weakly-$*$ in } [L^\infty(Q)]^N,
    \\
    &\omega_n^* := \alpha'(\eta_n^*) \nabla \gamma_{\varepsilon_n}(\nabla \theta_n^*) \to \omega^\circ := \alpha'(\eta^\circ) \varpi^\circ \mbox{ weakly-$*$ in } [L^\infty(Q)]^N,
  \end{aligned} \right. \mbox{ as } n \to \infty. \label{NC0_04}
\end{equation}
Due to the theory of Mosco convergence as in Remark \ref{Rem.MG} (Fact 1), one can say that
\begin{equation}
  \varpi^\circ \in \partial \gamma_0(\nabla \theta^\circ) = \Sgn(\nabla \theta^\circ), \mbox{ a.e. in } Q. \label{NC0_05}
\end{equation}

Here, we set $[p_\varepsilon, z_\varepsilon] := \sT[p_\varepsilon^*, z_\varepsilon^*] = \sP_\varepsilon^\circ(M_\eta(\eta_\varepsilon^* - \eta_\ad), M_\theta(\theta_\varepsilon^* - \theta_\ad))$, where $[p_\varepsilon^*, z_\varepsilon^*]$ is as in Main Theorem \ref{mth3}, and $\sP_\varepsilon^\circ$ is given in Remark \ref{rem:p*}. Then, by noting the convergences \eqref{NC0_02}--\eqref{NC0_05} and the following estimate:
\begin{equation}
  \sup_{\varepsilon \in (0,1)} |\alpha_0'(\eta_\varepsilon^*) \partial_t \theta_\varepsilon^*|_\sH \leq \sup_{\varepsilon \in (0,1)} \bigl( |\alpha'(\eta_\varepsilon^*)|_{L^\infty(Q)} |\partial_t \theta_\varepsilon^*|_\sH \bigr) < \infty,
\end{equation}
applying Corollary \ref{cor:P_est} yields that:
\begin{itemize}
  \item $\{ p_\varepsilon \}_{\varepsilon \in (0,1)}$ is bounded in $W^{1,2}(0,T;V)$,
  \item $\{ z_\varepsilon \}_{\varepsilon \in (0,1)}$ is bounded in $L^\infty(0,T;V)$.
\end{itemize}
Moreover, for any $\psi \in \sW$, we can compute that:
\begin{align}
  &\quad |\langle -\diver (\alpha(\eta_\varepsilon^*) \nabla^2 \gamma_\varepsilon(\nabla \theta_\varepsilon^*) \nabla z_\varepsilon^*), \psi \rangle_\sW| = |\bigl( \alpha(\eta_\varepsilon^*) \nabla^2 \gamma_\varepsilon(\nabla \theta_\varepsilon^*) \nabla z_\varepsilon^*, \nabla \psi \bigr)_{[H]^N}
  \\
  &\leq |(\alpha_0(\eta_\varepsilon^*) z_\varepsilon, \partial_t \psi)_\sH| + \nu^2 |(\nabla z_\varepsilon^*, \nabla \partial_t \psi)_{[\sH]^N}| + |(\alpha'(\eta_\varepsilon^*) p_\varepsilon^* \nabla \gamma_\varepsilon(\nabla \theta_\varepsilon^*), \nabla \psi)_{[\sH]^N}|
  \\
  &\qquad + |(M_\theta(\theta_\varepsilon^* - \theta_\ad), \psi)_\sH|
  \\
  &\leq C_5^* |\psi|_\sW,
\end{align}
with
\begin{equation}
  C_5^* := \sup_{\varepsilon \in (0,1)} \bigl( \sqrt{2}(|\alpha_0'(\eta_\varepsilon^*)|_{L^\infty(Q)} + |\alpha'(\eta_\varepsilon^*)|_{L^\infty(Q)} + \nu^2 + M_\theta)(|z_\varepsilon|_\sV + |p_\varepsilon|_\sH + |\theta_\varepsilon^* - \theta_\ad|_\sH) \bigr).
\end{equation}
Therefore, we find a subsequence $\{ \varepsilon_n \}_{n=1}^\infty$ (not relabeled) together with a pair of functions $[p^\circ, z^\circ]$, functions $\sigma^\circ, \xi^\circ \in \sH$ and a distribution $\mathfrak{z}^\circ \in \sW^*$ such that
\begin{equation}
  \left\{ \begin{aligned}
    &p_n^* := p_{\varepsilon_n}^* \to p^\circ \mbox{ in } C([0,T];H), \mbox{ weakly in } W^{1,2}(0,T;V),
    \\
    &z_n^* := z_{\varepsilon_n}^* \to z^\circ \mbox{ weakly-$*$ in } L^\infty(0,T;V),
  \end{aligned} \right. \label{NC0_06}
\end{equation}
\begin{equation}
  \xi_n^* := \partial_t \theta_n^* z_n^* \to \xi^\circ, \, \sigma_n^* := \nabla \gamma_{\varepsilon_n}(\nabla \theta_n^*) \cdot \nabla z_n^* \to \sigma^\circ \mbox{ weakly in } \sH, \label{NC0_07}
\end{equation}
\begin{equation}
  \mathfrak{z}_n^* := -\diver(\alpha(\eta_n^*) \nabla^2 \gamma_{\varepsilon_n}(\nabla \theta_n^*) \nabla z_n^*) \to \mathfrak{z}^\circ \mbox{ weakly in } \sW^*, \mbox{ as } n \to \infty. \label{NC0_08}
\end{equation}

Now, by virtue of \eqref{NC0}--\eqref{adj_2}, $[u_{\varepsilon_n}^*, v_{\varepsilon_n}^*]$ and $[p_n^*, z_n^*]$ satisfy the following variational inequalities:
\begin{gather}
  (L_u p_n^* + M_u u_{\varepsilon_n}^*, h - u_{\varepsilon_n}^*)_\sH \geq 0, \ \mbox{ for any } h \in \sK, \label{NC0_091}
  \\
  (L_v z_n^* + M_v v_{\varepsilon_n}^*, k)_\sH \geq 0, \ \mbox{ for any } k \in \sH, \label{NC0_09}
\end{gather}
\begin{gather}
  \int_I (-\partial_t p_n^*(t) + \lambda_n^*(t) p_n^*(t) + \alpha_0'(\eta_n^*(t)) \xi_n^*(t) + \alpha'(\eta_n^*(t)) \sigma_n^*(t), \varphi)_H \,dt
  \\
  + \int_I (\nabla (p_n^* - \mu^2 \partial_t p_n^*)(t), \nabla \varphi)_{[H]^N} \,dt = \int_I (M_\eta (\eta_n^* - \eta_\ad)(t), \varphi)_H \,dt, \label{NC0_10}
  \\
  \mbox{ for any } \varphi \in V, \mbox{ and any open interval } I \subset (0,T), \mbox{ with } p_n^*(T) = 0,
\end{gather}
and
\begin{gather}
  (a_n^* z_n^*, \partial_t \psi)_\sH + \langle \mathfrak{z}_n^*, \psi \rangle_\sW + (\nu^2 \nabla z_n^*, \nabla \partial_t \psi)_{[\sH]^N} + (\omega_n^* p_n^*, \nabla \psi)_{[\sH]^N}
  \\
  = (M_\theta(\theta_n^* - \theta_\ad), \psi)_\sH, \ \mbox{ for any } \psi \in \sW. \label{NC0_11}
\end{gather}
The convergences \eqref{NC0_01} and \eqref{NC0_06} enable us to see that
\begin{align}
  0 &\leq (L_u p^\circ + M_u u^\circ, h)_\sH - (L_u p^\circ, u^\circ) - \varliminf_{n \to \infty} |\sqrt{M_u} u_{\varepsilon_n}^*|_\sH^2
  \\
  &\leq (L_u p^\circ + M_u u^\circ, h - u^\circ)_\sH, \ \mbox{ for any } h \in \sK.
\end{align}
Also, letting $n \to \infty$ in \eqref{NC0_09} yields that
\begin{equation}
  (L_v z^\circ + M_v v^\circ, k)_\sH \geq 0, \ \mbox{ for any } k \in \sH.
\end{equation}
Therefore, we obtain the assertions \eqref{mth4_01a}, \eqref{mth4_01b} and \eqref{mth4_02}. Additionally, in the light of \eqref{NC0_02}--\eqref{NC0_08}, letting $n \to \infty$ in \eqref{NC0_10} and \eqref{NC0_11} yields that \eqref{mth4_04} and the following variational identity:
\begin{gather}
  \int_I (-\partial_t p^\circ(t) + \lambda^\circ(t) p^\circ(t) + \alpha_0'(\eta^\circ(t)) \xi^\circ, \varphi)_H \,dt + \int_I (\alpha'(\eta^\circ(t)) \sigma^\circ(t), \varphi)_H \,dt\label{NC0_12}
  \\
  + \int_I (\nabla (p^\circ - \mu^2 \partial_t p^\circ)(t), \nabla \varphi)_{[H]^N} \,dt= \int_I (M_\eta (\eta^\circ - \eta_\ad)(t), \varphi)_H \,dt,
  \\
  \mbox{ for any } \varphi \in V, \mbox{ and any open interval } I \subset (0,T),
\end{gather}
with
\begin{equation}
  p^\circ(T) = \lim_{n \to \infty} p_n^*(T) = 0 \mbox{ in } H.
\end{equation}
Since $I$ is arbitrary, we obtain \eqref{mth4_03}.

Thus, we complete the proof of Main Theorem \ref{mth4}. \noeqref{NC0_03,NC0_04,NC0_05,NC0_06,NC0_07,NC0_08,NC0_091}
\qed


\end{document}